      \string\usetikzlibrary{decorations.markings} to use arrows with markings}{}}{}%
\newcommand{\id}{\mathrm{id}}
\newcommand{\Sp}{\mathrm{Sp}}
\renewcommand{\a}{\mathfrak{a}}
\newcommand{\m}{\mathfrak{m}}
\newcommand{\p}{\mathfrak{p}}
\newcommand{\Z}{\mathds Z}
\newcommand{\N}{\mathds N}
\newcommand{\Q}{\mathds Q}
\newcommand{\F}{\mathds F}
\newcommand{\R}{\mathds R}
\newcommand{\NN}{\mathds{N}}
\newcommand{\DD}{\mathds{D}}
\newcommand{\CC}{\mathds{C}}
\newcommand{\SSS}{\mathds{S}}
\newcommand{\Frac}{\mathrm{Frac}}
\newcommand{\A}{{\mathds A}}
\newcommand{\ZZ}{\mathds{Z}}
\renewcommand{\P}{{\mathds P}}
\newcommand{\disc}{\mathit{disc}}
\newcommand{\QQ}{\mathds{Q}}
\newcommand{\cI}{{\mathscr I}}
\newcommand{\cO}{{\mathscr O}}
\newcommand{\cU}{{\mathscr U}}
\newcommand{\cV}{{\mathscr V}}
\newcommand{\cX}{{\mathscr X}}
\newcommand{\cY}{{\mathscr Y}}
\newcommand{\liso}{\mathrel{\hbox{$\longrightarrow$} \kern-2.4ex\lower-1ex\hbox{$\scriptstyle\sim$}\kern1.7ex}}
\newcommand{\an}{\mathrm{an}}
\newcommand{\val}{\mathrm{val}}
\newcommand{\spm}{\mathrm{sp}}
\newcommand{\Dcirc}{\overset{\circ}{\DD}}
\newcommand{\zerounderset}[3][\mathord]{%
  #1{\vtop{
    \let\\\cr
    \baselineskip\z@skip\lineskip.25ex
    \ialign{\hidewidth$##$\hidewidth\crcr
      \omit$#3$\cr
      #2\crcr
    }%
  }}%
}
\newtheoremstyle{alexthm}
  {}
  {}
  {\sl }
  {}
  {\bf}
  {.}
  {.5em}
  {}
\theoremstyle{alexthm}
\newtheorem{theorem}{Theorem}[section]
\newtheorem*{theorem*}{Theorem}
\newtheorem{corollary}[theorem]{Corollary}
\newtheorem{proposition}[theorem]{Proposition}
\newtheorem{lemma}[theorem]{Lemma}
\newtheorem*{lemma*}{Lemma}
\newtheorem{exercise}[theorem]{Exercise}
\newtheoremstyle{alexdef}
  {}
  {}
  {\rm }
  {}
  {\bf}
  {.}
  {.5em}
  {}
\theoremstyle{alexdef}
\newtheorem*{example*}{Example}
\newtheorem{example}[theorem]{Example}
\newtheorem{remark}[theorem]{Remark}
\newtheorem{definition}[theorem]{Definition}
\DeclareMathOperator{\Spec}{\mathrm{Spec}}
\DeclareMathOperator{\Spa}{\mathrm{Spa}}
\DeclareMathOperator{\Spf}{\mathrm{Spf}}
\DeclareMathOperator{\Cont}{\mathrm{Cont}}
\DeclareMathOperator{\supp}{\mathrm{supp}}
\DeclareMathOperator*{\colim}{colim}
\DeclareMathOperator{\ad}{\mathrm{ad}}
\DeclareMathOperator{\rk}{\mathrm{rk}}
\definecolor{darklimegreen}{RGB}{31,142,8}
\begin{document}

\hfuzz=4pt
\title{Adic spaces}
\author{Katharina H\"{u}bner}
\email{huebner@math.uni-frankfurt.de}
\date{\today}
\address{Robert Mayer Stra{\ss}e 6-8, 60325 Frankfurt}

\begin{abstract}
 Building up on the theory of Huber pairs presented in John Bergdall's lecture (\cite{bergdall2024huber}) we explain the construction of adic spaces.
 We study some important classes of adic spaces such as rigid analytic spaces and formal schemes and show the connections between them.
 In the course of the lecture we will illustrate the respective concepts with the fundamental examples of the open and closed disc and the affine line.
\end{abstract}

\maketitle
\tableofcontents

\section{Introduction}

These lecture notes are based on a series of lectures at the Spring school ``Non-archime\-dean geometry and Eigenvarieties'' in March~2023 in Heidelberg.
The objective of the first three courses was to give an introduction to the theory of adic spaces.
Like schemes they are locally ringed spaces but with additional structure.
Their structure sheaf carries a topology and every point comes with a valuation on its residue field.
One could say that the foundation of scheme theory is commutative algebra, i.e., the theory of rings and modules.
In this spirit the foundation of the theory of adic spaces would be ``topological'' commutative algebra, i.e. the theory of topological rings and modules.
In order to construct adic spaces we only need to consider topological rings of a certain kind, so called Huber rings.
Moreover, in order to encode the valuations on residue fields we need to study subrings of integral elements of a Huber ring.
A Huber ring~$A$ together with a subring~$A^+$ of integral elements is called a Huber pair and denoted $(A,A^+)$.
An example of a Huber ring would be~$\Q_p$ and an example of a ring of integral elements therein would be~$\Z_p$.
So $(\Q_p,\Z_p)$ is a Huber pair.

This foundation on topological rings has been laid in the first lecture by John Bergdall.
In this course we build on top of that and start with the construction of the adic spectrum of a Huber pair together with its structure sheaf.
This defines affinoid adic spaces.
We can then glue affinoid adic spaces together to obtain adic spaces, our main objects of study.
The category of adic spaces encompasses many different types of spaces.
In fact, rigid analytic varieties, formal schemes, and algebraic varieties can all be found as full subcategories in the category of adic spaces.
We will throw a spotlight at each of these incarnations of adic spaces and show connections between them.
In the process we will get to know many of the fundamental examples of adic spaces such as the closed and open unit disc and the affine line.

\section{The adic spectrum of a Huber pair}

For a Huber pair $(A,A^+)$ we define its adic spectrum.
As a set it is defined as
\[
 \Spa(A,A^+) = \{v:A \to \Gamma \cup \{0\}~\text{continuous valuation} \mid v(A^+) \le 1\}/\sim,
\]
where "$\sim$" refers to  equivalence of valuations.
In other words
\[
 \Spa(A,A^+) = \{v \in \Cont(A) \mid v(A^+) \le 1\}.
\]
If $A^+ = A^\circ$, we often just write $\Spa(A)$ instead of $\Spa(A,A^\circ)$.

For a point $x \in \Spa(A,A^+)$ we use the following notation for the corresponding valuation:
\begin{align*}
 A	 \longrightarrow	& ~\Gamma_x \cup \{0\}	\\
 a	 \longmapsto		& ~|a(x)|.
\end{align*}
This notation suggests that we should think of taking the valuation of~$a$ corresponding to a point~$x$ as evualuating~$a$ at~$x$ in the same way a function is evaluated at a point.

The \emph{support} of a valuation $v$ on a ring~$A$ is the prime ideal
\[
	\supp v := \{a \in A \mid v(a) = 0\}
\]
In fact a valuation~$v$ on~$A$ can equivalently be viewed as a valuation of the field $k(\supp v)$.
We just need to convince ourselves that~$v$ naturally defines a valuation~$\bar{v}$ of $A/\supp v$.
Since~$\bar{v}$ takes the value zero only if the argument is zero, we can extend $\bar{v}$ to the residue field $k(\supp v) = \Frac(A/\supp v)$ by setting
\[
 \bar{v}(\frac{a}{b}) = \frac{\bar v(a)}{\bar v(b)}.
\]
Conversely, any valuation~$\bar{v}$ of a residue field $k(\p)$ for $\p \in \Spec A$ gives rise to a valuation on~$A$ by composing with the natural map:
\[
 v:A \longrightarrow k(\p) \overset{\bar{v}}{\longrightarrow} \Gamma.
\]
This valuation has support $\p$ and the two constructions are inverse to each other.

\begin{example}
 \begin{itemize}
 	\item	The space $\Spa(\QQ_p) = \Spa(\QQ_p,\ZZ_p)$, has only one point, the $p$-adic valuation of $\QQ_p$.
			The trivial valuation is not continuous.
	\item	In $\Spa(\ZZ_p) = \Spa(\ZZ_p,\ZZ_p)$ there are two points, the $p$-adic valuation of $\QQ_p$ and the trivial valuation of $\F_p = \ZZ_p/p\ZZ_p$.
	\item	$\Spa(\QQ,\ZZ)$ has one point for each prime $p$ representing the $p$-adic valuation and an additional point for the trivial valuation.
	\item	Let $(k,k^\circ)$ be an algebraically closed non-archimedean field, e.g. $k = \CC_p$.
			The structure of the open unit disc over~$k$
			\[
			 \DD_k = \Spa(k \langle T \rangle) = \Spa(k\langle T\rangle,k^\circ\langle T\rangle)
			\]
			is already quite complicated.
			The easiest type of points is of the following form:
			Take $\alpha \in k$, with norm $|\alpha| \le 1$ and $r \in (0,1]$.
			This defines a point $x_{\alpha,r} \in \DD_{k}$  given by
			\[
			 |f(x_{\alpha,r})| = \max_i |a_i| r^i
			\]
			for $f = \sum a_i (T-\alpha)^i$.
			The full classification of points of $\DD_k$ can be found in \cite{bergdall2024huber}, \S 1.4.4.
 \end{itemize}
\end{example}

Let us study in some more detail the adic spectrum of an \emph{affinoid field}.
By this we mean a Huber pair $(k,k^+)$, where $k$ is a field with valuation ring~$k^+$ and such that~$k$ is endowed either with the valuation topology or the discrete topology.
In case $k$ carries the valuation topology and~$k^+ \ne k$, we need the extra condition that the valuation be \emph{microbial}, i.e., that~$k^+$ contains a nonzero topologically nilpotent element (see \cite{Hu96}, Definition~1.1.4).
But we are on the safe side if we assume that the rank of~$k^+$ is finite (which is fine for all our examples).
If the topology of~$k$ is discrete, we call the affinoid field $(k,k^+)$ discrete and otherwise analytic.
The standard example of an analytic affinoid field is $(k,k^\circ)$ for a nonarchimedean field~$k$.

Remember that the valuation topology associated with the valuation ring~$k^+$ is the topology defined by the metric\footnote{This is not a metric in the classical sense as it might not take values in~$\R$ (in case the valuation has higher rank) but otherwise satisfies all properties of a metric}
\[
 d(x,y) := |x-y|,
\]
where $|\cdot|$ denotes the valuation associated with~$k^+$.
So the sets
\[
 \{ x \in k \mid |x| < \gamma \}
\]
for varying $\gamma \in |k^\times|$ form a fundamental system of open neighborhoods of $0 \in k$.

\begin{exercise} \label{topology_valuation_ring}
 The sets
 \[
  \{ x \in k \mid |x| \le \gamma\}
 \]
 for $\gamma \in |k^\times|$ are open and also form a fundamental system of open neighborhoods of $0$.
\end{exercise}

At this point we can explain why we need the valuation ring~$k^+$ to be microbial in the analytic case.
Suppose this is the case and let~$\varpi$ be a topologically nilpotent element.

\begin{exercise} \label{topology_affinoid_field}
 The valuation topology on~$k$ coincides with the topology defined by the pair of definition $(k^\circ,(\varpi))$.
 Moreover $k^+$ is open in~$k$ and contained in~$k^\circ$ (which implies that we can alternatively take the pair of definition $(k^+,(\varpi))$).
 In particular, $(k,k^+)$ is indeed a Huber pair.
\end{exercise}

\begin{lemma} \label{k_circ_rank1}
 The subring $k^\circ \subseteq k$ of powerbounded elements is a valuation ring of rank~$1$ with maximal ideal $k^{\circ \circ}$.
\end{lemma}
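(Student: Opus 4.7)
The plan is to verify the lemma in three steps: that $k^\circ$ is a valuation subring of $k$, that its maximal ideal is $k^{\circ\circ}$, and that it has rank one, where the microbial hypothesis enters.

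First, using the neighbourhood basis $\{|y|\le\gamma\}_{\gamma\in|k^\times|}$ from \cref{topology_valuation_ring}, an element $x\in k$ is power-bounded iff its powers $(|x|^n)_n$ are bounded above in $\Gamma=|k^\times|$. Closure of $k^\circ$ under $+$ and $\cdot$ then follows from the ultrametric and multiplicative inequalities $|x+y|\le\max(|x|,|y|)$ and $|xy|=|x||y|$. For any $x\in k^\times$, the trichotomy in the totally ordered group $\Gamma$ gives $|x|\le 1$ or $|x^{-1}|\le 1$, and in either case the corresponding powers are bounded above by $1$; hence $k^\circ$ is a valuation ring of~$k$.

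Next I would identify the maximal ideal. For a valuation ring it is $\{0\}\cup\{x:x^{-1}\notin k^\circ\}$. If $|x|\ge 1$, then $|x^{-1}|^n\le 1$, so $x$ is a unit, and $|x|^n\not\to 0$. If $|x|<1$, then $x^{-1}\notin k^\circ$ iff $(|x|^{-n})_n$ has no upper bound in $\Gamma$, iff $(|x|^n)_n$ has no lower bound in $\Gamma$; and since $(|x|^n)_n$ is decreasing, this is precisely $|x|^n\to 0$, i.e.\ $x\in k^{\circ\circ}$. The two sets therefore coincide.

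For rank one I would translate ``$|\varpi|^n\to 0$'' into the algebraic statement that the convex subgroup of $\Gamma$ generated by $|\varpi|$ is all of $\Gamma$: for each $\gamma\in\Gamma$ one has $|\varpi|^n\le\gamma\le|\varpi|^{-n}$ for some $n$, by convergence. Consequently $|\varpi|$ lies in no proper convex subgroup of $\Gamma$, so the union $H$ of all proper convex subgroups (which is itself a convex subgroup, because convex subgroups of $\Gamma$ form a chain under inclusion) is proper and is the largest such. A direct check identifies $(k^\circ)^\times$ with $\{x:|x|\in H\}$: the condition $x,x^{-1}\in k^\circ$ says the cyclic group $\langle|x|\rangle\subseteq\Gamma$ is bounded, equivalently $|x|$ lies in some proper convex subgroup, equivalently $|x|\in H$. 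Hence the value group of $k^\circ$ is $\Gamma/H$, whose only convex subgroups are $\{1\}$ and itself by maximality of $H$, giving rank one.

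The main obstacle is this last step: converting the topological microbial hypothesis into the structural statement about convex subgroups of $\Gamma$, and then correctly identifying $(k^\circ)^\times$. The first two steps reduce immediately to the trichotomy in $\Gamma$ and to the description of boundedness and convergence via the neighbourhood basis $\{|y|\le\gamma\}$.
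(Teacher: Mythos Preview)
Your proof is correct. The first two steps match the paper's in substance: the paper obtains that $k^\circ$ is a valuation ring from the general fact that any overring of the valuation ring $k^+$ is again one, and identifies the maximal ideal by the same ``bounded powers of $1/x$'' computation you give (if $x$ is not topologically nilpotent then $|x^n|\ge\gamma$ for all $n$, hence $|(1/x)^n|\le 1/\gamma$).

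The rank-one step is where you genuinely diverge. The paper argues on the side of prime ideals: it shows directly that for any nonzero $x\in k^{\circ\circ}$ the radical ideal $\sqrt{(x)}$ equals $k^{\circ\circ}$, by using topological nilpotence of an arbitrary $y\in k^{\circ\circ}$ to find $n$ with $|y^n|\le|x|$, whence $y^n/x\in k^+\subseteq k^\circ$; thus $(0)$ and $k^{\circ\circ}$ are the only primes. You instead work on the side of value groups: cofinality of $|\varpi|$ forces a unique maximal proper convex subgroup $H\subsetneq\Gamma$, you identify $(k^\circ)^\times$ with $|\cdot|^{-1}(H)$, and conclude that the value group of $k^\circ$ is $\Gamma/H$, which has no nontrivial convex subgroups. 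This is precisely the viewpoint the paper takes up in the Remark immediately following the lemma (your $H$ is the $\Delta$ there), so your argument effectively proves that remark at the same time. The paper's prime-ideal route is slightly more elementary and self-contained; yours makes the structure of the rank-one valuation $|\cdot|^\circ$ explicit and connects cleanly to the convex-subgroup formalism used later.
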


\begin{proof}
In general, every subring of~$k$ containing a valuation ring of~$k$ (in our case $k^+$) is again a valuation ring.
Here is the argument:
Suppose $x \in k$ is not contained in~$k^\circ$.
Then it is not contained in $k^+$ either.
Since $k^+$ is a valuation ring, $1/x$ is contained in $k^+$, hence in $k^\circ$.

In order to see that $k^{\circ \circ}$ is the maximal ideal of~$k^\circ$, we have to check that every element in $k^\circ \setminus k^{\circ \circ}$ is a unit of $k^\circ$.
If $x \in k^\circ$ is not topologically nilpotent, there is an open neighborhood $U \subseteq k$ of zero not containing any power of~$x$.
Unravelling the definition of the valuation topology we can equivalently formulate this in terms of the valuation $|\cdot|$ of~$k^+$:
There is $\gamma \in |k^\times|$ such that all powers of $x$ are bounded below by $\gamma$:
\[
 |x^n| \ge \gamma \quad \forall n \in \N.
\]
This implies
\[
 \left|\left(\frac{1}{x}\right)^n\right| \le \frac{1}{\gamma} \quad \forall n \in \N,
\]
so the powers of $1/x$ are bounded by $1/\gamma$.
This translates to $1/x$ being powerbounded in the valuation topology.
We conclude that $1/x$ is contained in~$k^\circ$, hence $x$ is invertible.

It remains to convince ourselves that $k^+$ has rank~$1$, or equivalently Krull dimension~$1$.
We claim that $(0) \ne k^{\circ \circ}$ and these are the only prime ideals of $k^\circ$.
The inequality $(0) \ne k^{\circ \circ}$ follows from the existence of the topologically nilpotent unit $\varpi$.
Being a unit just means $\varpi \ne 0$, so $\varpi \in k^{\circ \circ} \setminus \{0\}$.
We already know that $k^\circ$ is a valuation ring (hence in particular local) with maximal ideal $k^{\circ \circ}$.
Any other prime ideal needs to be contained in $k^{\circ \circ}$.
Therefore it suffices to show that the radical ideal generated by any nonzero element~$x$ of $k^{\circ \circ}$ equals $k^{\circ \circ}$.
To see this we take any other element $y \in k^{\circ \circ}$ and use its topological nilpotence to find $n \in \N$ such that
\[
 |y^n| \le |x|
\]
This implies that $a:= y^n/x$ is contained in $k^+$, hence in~$k^\circ$.
In other words, $y$ is contained in the radical ideal generated by~$x$.
\end{proof}

\begin{remark}
 The valuation $|\cdot|^\circ$ corresponding to $k^\circ$ can be obtained from $|\cdot|$ in the following way.
 Remember that an element $\gamma$ of a totally ordered group~$\Gamma$ is \emph{cofinal} if for any other element $\delta \in \Gamma$ there is $n \in \N$ such that
 \[
  \gamma^n \le \delta.
 \]
 Note that $x \in k^\times$ is topologically nilpotent if and only if $|x|$ is cofinal in $|k^\times|$.
 Now we convince ourselves that the subset
 \[
  \Delta:=\{\gamma \in |k^\times| \mid \text{neither $\gamma$ nor $1/\gamma$ is cofinal in $|k^\times|$}\}
 \]
 of $|k^\times|$ is a convex subgroup.
 Then $\Gamma:= |k^\times|/\Delta$ is a totally ordered group and we claim that $|\cdot|^\circ$ is given by the composite
 \[
  k \overset{|\cdot|}{\longrightarrow} |k^\times| \cup \{0\} \twoheadrightarrow \Gamma \cup \{0\}.
 \]
 Notice that $x \in k^\times$ is powerbounded if and only if $1/x$ is \emph{not} topologically nilpotent.
 In this way we identify $k^\circ$ with the set of topologically nilpotent elements $k^{\circ \circ}$ together with those elements $x \in k^\times$ such that neither $x$ nor $1/x$ is topologically nilpotent.
 This is precisely the valuation ring corresponding to the above defined valuation.
\end{remark}

Affinoid fields are important as their adic spectrum is the prototype of something like a point in an adic space.
This is not a hundred percent precise since $\Spa(k,k^+)$ may contain several points as the following example shows.

\begin{example}[The adic spectrum of an affinoid field] \label{spectrum_affinoid_field}
 We want to understand the adic spectrum of an affinoid field $(k,k^+)$.
 We first take a look at all continuous valuations.
 In case the topology of~$k$ is discrete, there are no restrictions and any valuation ring of~$k$ defines a continuous valuation.
 For example the continuos valuations of $\QQ$ (with the discrete topology) are precisely all $p$-adic valuations for different prime numbers~$p$ together with the trivial valuation (this follows from Ostrowski's theorem).
 
 Let us now assume that the valuation ring~$k^+$ is non-trivial, i.e. not equal to~$k$ and $k$ carries the valuation topology.
 In other words $(k,k^+)$ is analytic.
 It is not hard to see that the valuation corresponding to a valuation ring~$R$ is continuous if and only if~$R$ is contained in the subring of powerbounded elements $k^\circ \subseteq k$.
 In our setting, where the topology on~$k$ is the valuation topology defined by the valuation ring~$k^+$, $k^\circ$ is a valuation ring of rank~$1$ (see Lemma~\ref{k_circ_rank1}).
 Having an inclusion of valuation rings
 \[
  R \subseteq k^\circ \subseteq k
 \]
 automatically implies that~$k^\circ$ is the localization of~$R$ at a prime ideal (in our case a prime ideal of height~$1$ since~$k^\circ$ has rank~$1$).
 On the one hand, the image of~$R$ under
 \[
  \pi:k^\circ \twoheadrightarrow k^\circ/k^{\circ \circ}
 \]
 is a valuation ring~$\bar{R}$ and $R = \pi^{-1}(\bar{R})$.
 On the other hand the preimage of any valuation ring of $k^\circ/k^{\circ \circ}$ under~$\pi$ is a valuation ring corresponding to a continuous valuation.
 In this way we see that the continuous valuations of~$k$ correspond to the valuations of $k^\circ/k^{\circ \circ}$.
 Via this correspondence $k^\circ$ corresponds to the trivial valuation on $k^\circ/k^{\circ \circ}$.
 
 Remember that there are no non-trivial valuations on a finite field.
 Therefore, if $k^\circ/k^{\circ \circ}$ is an algebraic extension of a finite field, there is only one continuous valuation of~$k$, namely the one corresponding to~$k^\circ$.
 This is the case for $k = \QQ_p$ or $k=\CC_p$.
 
 Finally let us describe the adic spectrum $\Spa(k,k^+)$.
 By definition it consists of all (equivalence classes of) continuous valuations whose valuation ring contains~$k^+$.
 These correspond to valuation rings $R$ in between $k^+$ and $k^\circ$:
 \[
  k^+ \subseteq R \subseteq k^\circ.
 \]
 All valuation rings of this kind are totally ordered by inclusion and can be obtained by localizing~$k^+$ at a prime ideal.
 In this case the rank of~$R$ equals the height of the prime ideal.
 If the topology of~$k$ is discrete, $k^\circ = k$ and any prime ideal of~$k^+$ is allowed.
 If $(k,k^+)$ is analytic, $k^\circ$ is of rank~$1$ and thus only the prime ideals of height greater or equal to~$1$ occur.
 We conclude that $\Spa(k,k^+)$ has $\rk k^+$ many points if $(k,k^+)$ is analytic and $\rk k^+ + 1$ many points if $(k,k^+)$ is discrete.
\end{example}

For abelian topological groups and commutative topological rings there is a general theory of completion (see \cite{BourbakiTG}, III \S 3).
In our situation the completion $\hat{A}$ of a Huber ring~$A$ is obtained by $I$-adically completing the ring of definition $A_0$ (where $I \subseteq A_0$ is an ideal of definition) and then taking the tensor product:
\[
 \hat{A} = \hat{A}_0 \otimes_{A_0} A.
\]
We obtain a ring of integral elements $\hat{A}^+ \subseteq \hat{A}$ by taking the topological closure of~$A^+$ in~$\hat{A}$.
Then $\hat{A}^+$ is automatically integrally closed in $\hat{A}$ as can be seen by approximation.

\begin{lemma} \label{spectrum_completion}
 Let $(A,A^+)$ be a Huber pair with completion $(\hat{A},\hat{A}^+)$.
 Then we have a natural identification
 \[
  \Spa(\hat{A},\hat{A}^+) \cong \Spa(A,A^+).
 \]
\end{lemma}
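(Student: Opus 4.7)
The plan is to establish the identification via the natural restriction map
\[
 r\colon \Spa(\hat A, \hat A^+) \longrightarrow \Spa(A, A^+), \qquad \hat v \longmapsto \hat v|_A.
\]
Since $A \to \hat A$ is continuous and carries $A^+$ into $\hat A^+$, the restriction $\hat v|_A$ is automatically a continuous valuation satisfying $\hat v|_A(A^+) \le 1$, so $r$ is well defined. The content of the lemma is then that $r$ is bijective.

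For injectivity I would first observe that the support of any continuous valuation $w$ is closed: if $w(a) > 0$ and $w(x) < w(a)$ then the strict triangle inequality gives $w(a + x) = w(a) > 0$, so $a + \{x \mid w(x) < w(a)\}$ is an open neighborhood of $a$ disjoint from $\supp w$. Since $A$ is dense in $\hat A$, this forces $\supp \hat v = \overline{\supp \hat v \cap A}$, so the support of $\hat v$ is recovered from $\hat v|_A$. For the remaining order data, given $a, b \in \hat A \setminus \supp \hat v$ and Cauchy approximations $a_n, b_n \in A$, continuity of $\hat v$ together with the strict triangle identity forces $\hat v(a_n) = \hat v(a)$ and $\hat v(b_n) = \hat v(b)$ for $n$ large, so the comparison $\hat v(a) \le \hat v(b)$ is determined entirely by $\hat v|_A$.

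The main obstacle is surjectivity. Given $v \in \Spa(A, A^+)$, I would define an extension $\hat v$ on $\hat A$ by a direct Cauchy-sequence construction: for $a \in \hat A$ choose a Cauchy sequence $(a_n) \subseteq A$ with $a_n \to a$, and set $\hat v(a) := 0$ if the values $v(a_n)$ eventually fall below every $\gamma \in \Gamma_v$, and otherwise set $\hat v(a) := v(a_N)$ for $N$ sufficiently large. Such an $N$ exists because once $v(a_N) \ge \gamma_0$ for some fixed $\gamma_0 > 0$, the Cauchy condition applied to the open neighborhood $v^{-1}([0, \gamma_0))$ of zero gives $v(a_n - a_N) < \gamma_0 \le v(a_N)$, whence $v(a_n) = v(a_N)$ for all large $n$. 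Independence of the sequence, multiplicativity, the ultrametric inequality, and the identity $\hat v|_A = v$ all follow from the same Cauchy argument. Continuity of $\hat v$ holds because the additive subgroup $\hat v^{-1}([0, \gamma)) \subseteq \hat A$ contains the closure of the open set $v^{-1}([0, \gamma)) \subseteq A$, which is an open neighborhood of zero in $\hat A$. Lastly, for $a \in \hat A^+ = \overline{A^+}$ approximated by $a_n \in A^+$, the bound $v(a_n) \le 1$ propagates to $\hat v(a) \le 1$, placing $\hat v$ in $\Spa(\hat A, \hat A^+)$.
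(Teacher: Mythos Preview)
Your argument is correct and follows essentially the same route as the paper: extend a continuous valuation to $\hat A$ by the Cauchy-sequence construction, using the strict triangle equality to see that the values $v(a_n)$ stabilize once they are bounded away from zero. You simply supply more of the verifications (injectivity, continuity of the extension, the $\hat A^+$-condition) that the paper leaves to the reader.

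One small slip: the formula $\supp \hat v = \overline{\supp \hat v \cap A}$ does not follow just from $\supp \hat v$ being closed and $A$ being dense---a closed subset of $\hat A$ need not meet $A$ densely. What is true (and what you actually need) is that membership in $\supp \hat v$ is determined by $\hat v|_A$: if $a_n \to a$ with $a_n \in A$, then $\hat v(a_n) \le \hat v(a_n - a)$ becomes arbitrarily small when $\hat v(a)=0$, so $a \in \supp \hat v$ iff the values $\hat v|_A(a_n)$ eventually fall below every $\gamma$. Your stabilization argument for the ``order data'' already handles this, so the injectivity proof goes through once you replace the closure formula by this observation.
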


\begin{proof}
 The crucial point is that the adic spectrum only contains \emph{continuous} valuations.
 A continuous valuation
 \[
  v : A \to \Gamma
 \]
naturally extends to the completion of~$A$.
For a sequence $(a_n)_{n \in \N}$ in $A$ converging to $a \in \hat{A}$ the continuity of the valuation~$v$ implies that for any $\gamma \in \Gamma$ the valuation of the difference $v(a_n - a_m)$ is less than $\gamma$ for $m$ and $n$ big enough.
If the sequence $(v(a_n))_{n \in \N}$ becomes arbitrarily small for $n \to \infty$, we set $v(a) = 0$.

Otherwise we claim that $(v(a_n))_{n \in \N}$ stabilizes.
By assumption there is $C \in \Gamma$ with $C > 0$ such that
\[
 v(a_n) \ge C
\]
for $n$ big enough.
Let $\gamma \in \Gamma$ with $0 < \gamma < C$.
Then for $n$ and $m$ big enough, $v(a_n - a_m) < \gamma$ and therefore
\[
 v(a_m) = v(a_n + (a_m - a_n)) = v(a_n).
\]
Here we used that
\[
 v(a_m - a_n) < \gamma < C \le a_n
\]
such that we even get an equality in the strong triangle inequality.
Hence we can set $v(a)$ equal to the constant value $v(a_n)$ for $n$ big enough.

Of course one needs to check that the construction does not depend on the chosen sequence $(a_n)_{n \in \N}$ and indeed defines a continuous valuation on $\hat{A}$.
\end{proof}

We equip $\Spa(A,A^+)$ with the subspace topology from $\Cont(A)$.
This is the topology generated by the subsets of $\Spa(A,A^+)$ of the form
\[
 R\left(\frac{f_1,\ldots,f_n}{g}\right) = \{x \in \Spa(A,A^+) \mid |f_i(x)| \le |g(x)| \ne 0~\text{for}~i=1,\ldots,n\},
\]
where $f_1,\ldots,f_n,g$ are elements of~$A$ generating an \emph{open} ideal of~$A$.
These subsets are called \emph{rational subsets}.
Note that if $A$ is a Tate ring, i.e. contains a topologically nilpotent unit~$\varpi$, the only open ideal is~$A$ itself because some power of~$\varpi$ would have to be contained in it.

If the elements $f_1,\ldots,f_n,g$ do \emph{not} generate an open ideal, the corresponding subset of $\Spa(A,A^+)$ is not rational (by definition) but still open.
This follows from the following exercise.

\begin{exercise} \label{generalized_rational}
 Let $f_1,\ldots,f_n,g$ be arbitrary elements of~$A$.
 For each $r \in \N$ let $E_r$ be a finite set of generators of $I^r$ (where~$I$ is an ideal of definition).
 Then
 \[
  R\left(\frac{f_1,\ldots,f_n}{g}\right) = \bigcup_{r \in \N} R\left(\frac{{\{f_1,\ldots,f_n\}} \cup E_r}{g}\right)
 \]
 and the set
 \[
  \{f_1,\ldots,f_n,g\} \cup E_r
 \]
 generates an open ideal of~$A$.
\end{exercise}

In Lemma~\ref{spectrum_completion} we have seen that $\Spa(A,A^+)$ is naturally isomorphic to $\Spa(\hat{A},\hat{A}^+)$.
A priori it is not clear, however, whether $\Spa(A,A^+)$ and $\Spa(\hat{A},\hat{A}^+)$ have the same rational subsets.
There could be more of them in $\Spa(\hat{A},\hat{A}^+)$ because we can consider rational subsets defined by $f_1,\ldots,f_n,g \in \hat{A}$ that are not all contained in $A$.
Luckily it turns out that we can approximate $f_1,\ldots,f_n,g$ by elements of~$A$ without changing the corresponding rational subset.
This is ensured by the following proposition.

\begin{proposition}
 Suppose $f_1,\ldots,f_n,g \in \hat{A}$ generate an open ideal.
 Then there is an open neighborhood $U$ of~$0$ in $\hat{A}$ such that for all $f'_i \in f_i + U$ and $g' \in g + U$ the ideal of $\hat{A}$ generated by $f'_1,\ldots,f'_n,g'$ is open and
 \[
  R\left(\frac{f_1,\ldots,f_n}{g}\right) = R\left(\frac{f'_1,\ldots,f'_n}{g'}\right).
 \]
\end{proposition}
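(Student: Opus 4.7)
The proof splits into two parts: showing that the ideal $(f'_1,\ldots,f'_n,g')$ is open, and then proving equality of the two rational subsets.

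For openness, I use that $(f_1,\ldots,f_n,g)$ being open means it contains $I^r$ for some ideal of definition $I \subseteq \hat{A}_0$ (in a ring of definition $\hat{A}_0 \subseteq \hat{A}$) and some $r \ge 1$. Fix generators $e_1,\ldots,e_m$ of $I^r$ and expressions $e_j = \sum_i a_{ij} f_i + b_j g$. By continuity of multiplication in $\hat{A}$, I may choose $U$ small enough that $a_{ij} U \subseteq I^{2r}$ and $b_j U \subseteq I^{2r}$ for all $i,j$. Substituting then gives $\sum_i a_{ij} f'_i + b_j g' = e_j + w_j$ with $w_j \in I^{2r} = I^r \cdot I^r$, hence $w_j = \sum_k c_{jk} e_k$ with $c_{jk} \in I^r$. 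This amounts to a matrix identity $(\id + C)\vec{e} \in (f'_1,\ldots,f'_n,g')^m$ with $C = (c_{jk})$. Because $I$ is topologically nilpotent and $\hat{A}_0$ is $I$-adically complete, $\id + C$ is invertible in $M_m(\hat{A}_0)$ via its Neumann series, so each $e_j$, and hence $I^r$, lies in $(f'_1,\ldots,f'_n,g')$.

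For the equality of rational subsets, write $R := R(f_1,\ldots,f_n/g)$ and $R' := R(f'_1,\ldots,f'_n/g')$. The decisive ingredient is a \emph{uniform} lower bound on $|g(x)|$ as $x$ ranges over $R$. In the Tate case, fix a topologically nilpotent unit $\varpi$. Since $(f_1,\ldots,f_n,g)$ is open, some $\varpi^s$ lies in it, say $\varpi^s = \sum_i \alpha_i f_i + \beta g$. Choose $N$ so that $\varpi^N \alpha_i, \varpi^N \beta \in \hat{A}^\circ$ (possible because every element of $\hat{A}$ has the form $\varpi^{-M} a_0$ for some $a_0 \in \hat{A}_0$). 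Then for every $x \in R$,
\[
 |\varpi^{s+N}(x)| \le \max_i\bigl(|\varpi^N \alpha_i(x)|,\, |\varpi^N \beta(x)|\bigr) \cdot |g(x)| \le |g(x)|.
\]
Shrinking $U$ further to lie in $\varpi^{s+N+1}\hat{A}_0$, any $y \in U$ satisfies $|y(x)| \le |\varpi^{s+N+1}(x)| < |\varpi^{s+N}(x)| \le |g(x)|$ for all $x \in R$, the strict inequality coming from $|\varpi(x)| < 1$. Applied to $u_i := f'_i - f_i$ and $v := g' - g$, this yields $|g'(x)| = |g(x)| \ne 0$ and $|f'_i(x)| \le |g(x)| = |g'(x)|$, hence $x \in R'$.

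The reverse inclusion $R' \subseteq R$ follows by an entirely symmetric application with the primed tuple in place of the unprimed one, invoking the openness of $(f'_1,\ldots,f'_n,g')$ from the first part to produce the analogous uniform bound on $|g'(x)|$ over $R'$; shrinking $U$ one more time if necessary makes a single $U$ work in both directions. The main obstacle is precisely this uniform lower bound on $|g(x)|$ over $R$---without openness of the ideal, $|g(x)|$ could approach zero on $R$ and no fixed perturbation size $U$ would suffice.
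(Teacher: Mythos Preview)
The paper does not give a proof here---it simply cites \cite{Hu93}, Lemma~3.10---so there is no argument to compare against. Your write-up has the right architecture and Part~1 (openness via the Neumann-series inversion of $\id+C$ in $M_m(\hat A_0)$) is clean and works for arbitrary Huber rings. Two points deserve attention.

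\textbf{Restriction to the Tate case.} Your Part~2 assumes a topologically nilpotent unit $\varpi$, but the proposition is stated for general Huber pairs. The argument does extend: in place of the single bound $|\varpi^{s+N}(x)|\le|g(x)|$, use the generators $e_j$ of $I^r$ directly. By continuity of multiplication choose $N$ with $I^N a_{ij},\,I^N b_j\subseteq I$ for all $i,j$; then for $y\in I^{N+r}$ write $y=\sum_j d_j e_j$ with $d_j\in I^N$, so each $d_j a_{ij},\,d_j b_j\in I$, and the continuity criterion gives $|d_j a_{ij}(x)|<1$, $|d_j b_j(x)|<1$, hence $|y(x)|<|g(x)|$ on $R$. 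This also handles non-analytic points, where $|y(x)|=0$ automatically once $U\subseteq I^r$.

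\textbf{Circularity in the reverse inclusion.} Your sentence ``shrinking $U$ one more time if necessary makes a single $U$ work in both directions'' hides a genuine problem. The analogous bound $|g'(x)|\ge|\varpi^{s'+N'}(x)|$ on $R'$ requires writing $\varpi^{s'}=\sum\alpha'_i f'_i+\beta' g'$ and choosing $N'$ to control $\alpha'_i,\beta'$; but these coefficients depend on the particular perturbation $(f'_i,g')$, so the required shrinkage of $U$ varies over the uncountably many choices inside $f_i+U$, $g+U$. The fix is already implicit in your Part~1: the Neumann-series identity gives $e_j=\sum_i a'_{ij}f'_i+b'_j g'$ with $a'_{ij}=\sum_k[(\id+C)^{-1}]_{jk}a_{ik}$ and $(\id+C)^{-1}\in M_m(\hat A_0)$, so the primed coefficients lie in $\hat A_0\cdot\{a_{ik},b_k\}$. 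Thus the \emph{same} $N$ that works for the unprimed coefficients works for the primed ones uniformly in the perturbation, and a single $U$ suffices. You should make this explicit rather than appeal to symmetry.
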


\begin{proof}
 \cite{Hu93}, Lemma~3.10
\end{proof}

As $A$ is dense in $\hat{A}$ we can choose $f'_1,\ldots,f'_n,g'$ as in the lemma to be contained in~$A$ and then the corresponding rational subset~$U$ is defined by elements in~$A$.
However, this does not yet ensure that $f'_1,\ldots,f'_n,g'$ generate an open ideal \emph{in $A$}.
But we can extend the set $\{f'_1,\ldots,f'_n\}$ by elements $f'_{n+1},\ldots,f'_k$ that are less than or equal to $g'$ on~$U$ and such that the ideal of~$A$ generated by $f'_1,\ldots,f'_k,g$ is open (see \cite{Hu93}, Lemma~3.11).

\begin{example} \label{covering_unit_disc}
 Let~$k$ be a nonarchimedean field.
 For simplicity we assume it is algebraically closed.
 Then
 \[
  \DD_k=\Spa(k\langle T\rangle,k^\circ \langle T \rangle)
 \]
 is called the \emph{adic unit disc} over~$k$.
 It can also be described as the subset
 \[
  \{ x \in \Cont(k \langle T \rangle) \mid |T(x)| \le 1,~|k^\circ(x)| \le 1\}
 \]
 The second condition ensures that the restriction of~$x$ to $k$ coincides with the given valuation of~$k$.
 Without this condition $x|_k$ could be any continuous valuation of~$k$.
 As we have seen in Example~\ref{spectrum_affinoid_field} the corresponding valuation ring could be a proper subring of $k^\circ$ (this happens if the rank of $x|_k$ is greater than $1$).
 The condition $|k^\circ(x)| \le 1$ forces $k^\circ$ to be contained in the valuation ring implying that the valuation ring of $x|_k$ equals $k^\circ$.
 If $k=\CC_p$, this is automatic as its residue field is $\bar{\F}_p$, an algebraic extension of a finite field (see Example~\ref{spectrum_affinoid_field}).
 
 For $\alpha \in k^\circ$ and $r \in |k^\times|$ the rational subset
 \[
  \DD_k(\alpha,r) := \{x \in \DD_k \mid |(T-\alpha)(x)| \le r\}
 \]
 is a disc of radius~$r$ around~$\alpha$.
 Here~$\alpha$ is considered as a ``classical point'', i.e. the valuation
 \[
  k\langle T \rangle \overset{T \mapsto \alpha}{\longrightarrow} k \overset{|\cdot|}{\longrightarrow} \R_{>0} \cup \{0\}.
 \]
 The ``boundary ring''
 \[
  \SSS_k(0,1) := \{x \in \DD_k \mid |T(x)| = 1 \} = \{x \in \DD_k \mid |T(x)| \le 1, 1 \le |T(x)| \ne 0\}
 \]
 is also a rational subset.
 One might expect that the union of $\SSS_k(0,1)$ with all discs $\DD_k(0,r)$ of radius $r < 1$ is all of $\DD_k$.
 However the union misses one point:
 Consider the abelian group $\R_{>0} \times \R_{>0}$.
 We endow it with the lexicographical ordering, i.e.
 \[
  (a_1,b_1) < (a_2,b_2) \Leftrightarrow a_1 < a_2~\text{or}~(a_1 = a_2~\text{and}~b_1 < b_2).
 \]
 Now we pick an element $\epsilon \in \R_{>0} \times \R_{>0}$ of the form $(1,\epsilon_2)$ with $\epsilon_2 < 1$.
 So $\epsilon$ is an element that is less than $1 = (1,1)$ but infinitesimally close to~$1$ in the sense that $(r_1,r_2) < \epsilon$ whenever $r_1 < 1$.
 Via the embedding
 \[
  \R_{>0} \hookrightarrow \R_{>0} \times \R_{>0}, \qquad r \mapsto (r,1)
 \]
 we can consider the valuation on~$k$ as a valuation $k \to (\R_{>0} \times \R_{>0}) \cup \{0\}$.
 We now define a valuation
 \[
  x_{0,1-} : k\langle T \rangle \longrightarrow (\R_{>0} \times \R_{>0}) \cup \{0\}
 \]
 that maps $f = \sum_i a_i T^i \in k\langle T \rangle$ to
 \[
  |f(x_{0,1-})| = \max_i |a_i|\epsilon^i.
 \]
 This is indeed an element of $\DD_k$:
 It is continuous as for any pseudouniformizer~$\varpi \in k$ we have that $|\varpi(x_{0,1-})| = |\varpi|$ is cofinal in $\R_{>0} \times \R_{>0}$ and $|\varpi f(x_{0,1-})| < 1$ for any $f \in k^\circ\langle T \rangle$ (see the criterion for continuity in \cite{bergdall2024huber}, Proposition~1.2.7.1).
 Moreover $|f(x_{0,1-})| \le 1$ for all $f \in k^\circ \langle T \rangle$, so $x_{0,1-} \in \DD_k$.
 However,
 \[
  |T(x_{0,1-})| = \epsilon
 \]
 and
 \[
  r < \epsilon < 1~\forall~r \in [0,1).
 \]
 So neither is $x_{0,1-}$ contained in the ring $\SSS_k(0,1)$ nor in any $\DD_k(0,r)$, and this is indeed the only point of
$\DD_k$ not contained in this union, as follows from the classification of the points of $\DD_k$, which is explained in \cite{bergdall2024huber}, \S 1.4.4.
 
 We want to remark here that the point~$x_{0,1-}$ defined in this way does not depend on the choice of $\epsilon = (1,\epsilon_2)$ as long as $\epsilon_2 < 1$.
 The valuations for different choices of~$\epsilon$ are equivalent.
 This is why~$\epsilon$ does not show up in the index of $x_{0,1-}$. 
 By the minus sign we indicate that $\epsilon < 1$, or equivalently that we take a radius that is infinitesimally smaller than~$1$.
 If we choose $\epsilon >1$ we obtain a different valuation that will play a role in Example~\ref{closure_unit_disc}.
\end{example}

\section{The structure (pre-)sheaf}

We want to define on $X=\Spa(A,A^+)$ a sheaf of complete topological rings $\cO_X$ together with a subsheaf $\cO_X^+$ providing~$\cO_X$ with an integral structure satisfying
\[
 \cO_X(X) = \hat{A},	\qquad \cO_X^+(X) = \hat{A}^+
\]
The construction starts by assigning to each rational subset~$U$ a pair of rings $(A_U,A_U^+)$.
This defines two presheavs, $\cO_X$ and~$\cO_X^+$, on the category of rational subsets.
It is in the same spirit as the construction of the structure sheaf on an affine scheme.
Unfortunately there are Huber pairs $(A,A^+)$ such that $\cO_X$ does not satisfy the sheaf axioms.
The problem lies in the completion of rings that is involved in the construction.
So a priori we only construct a structure presheaf.
We will then discuss the main instances when $\cO_X$ is actually a sheaf.

For a rational subset
\[
 U = R\left(\frac{f_1,\ldots,f_n}{g}\right) = \{x \in X \mid |f_i(x)| \le |g(x)| \ne 0~\forall i=1,\ldots,n\}
\]
as above we consider the pair of rings
\begin{equation} \label{Huber_pair_rational}
 \left(A\left[\frac{1}{g}\right],A^+\left[\frac{f_1}{g},\ldots,\frac{f_n}{g}\right]^N\right)
\end{equation}
where $(\cdot)^N$ stands for taking the integral closure in $A_g = A[\frac{1}{g}]$ ($N$ for normalization).
We equip it with a topology in the following way.
Let $A_0$ be a ring of definition and~$I$ an ideal of definition for the Huber ring~$A$.
Then we turn~$A_g$ into a Huber ring by setting its ring of definition equal to
\[
 A_0\left[\frac{f_1}{g},\ldots,\frac{f_n}{g}\right]
\]
and its ideal of definition to
\[
 IA_0\left[\frac{f_1}{g},\ldots,\frac{f_n}{g}\right].
\]
One checks that with these definitions the pair of rings defined in (\ref{Huber_pair_rational}) is indeed a Huber pair.
We then take completions to obtain a complete Huber pair $(A_U,A_U^+)$.

\begin{lemma} \label{rational_subset_spectrum}
 The homomorphism of Huber pairs $(A,A^+) \to (A_U,A_U^+)$ induces an injection
 \[
 \Spa(A_U,A_U^+) \hookrightarrow \Spa(A,A^+)
 \]
 whose image is the rational subset~$U$.
\end{lemma}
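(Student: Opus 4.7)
The plan is to show that the map $\Spa(A_U, A_U^+) \to \Spa(A, A^+)$ is injective with image equal to $U$. First, by Lemma~\ref{spectrum_completion}, I may replace the completed Huber pair $(A_U, A_U^+)$ by its uncompleted version $(B, B^+) := \bigl(A[1/g], A^+[f_1/g,\ldots,f_n/g]^N\bigr)$ equipped with the topology described before the statement, and prove that the pullback map $r : \Spa(B, B^+) \to \Spa(A, A^+)$ is injective with image $U$.

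That the image lies in $U$ is almost formal. Given $v \in \Spa(B, B^+)$, write $w = r(v) = v|_A$. Continuity and $w(A^+) \le 1$ are inherited from $v$ and the inclusion $A^+ \subseteq B^+$. Since $g$ is a unit of $B$ we have $v(g) \ne 0$, hence $w(g) \ne 0$; and since $f_i/g \in B^+$, the inequality $v(f_i/g) \le 1$ rearranges to $w(f_i) \le w(g)$. Thus $w \in U$.

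For surjectivity onto $U$ (and simultaneously for injectivity), given $w \in U$ I would extend $w$ to $B$ by the only formula available: $v(a/g^k) := w(a)/w(g)^k$. Since $g$ is a unit of $B$, any extension of $w$ to $B$ must be of this form, so such an extension is unique, giving injectivity of $r$. One checks routinely that $v$ is a valuation. The condition $v(B^+) \le 1$ follows because $v(f_i/g) = w(f_i)/w(g) \le 1$, so $v \le 1$ already holds on $A^+[f_1/g,\ldots,f_n/g]$; the passage to the integral closure $B^+$ preserves the bound, since if $x \in A[1/g]$ satisfies a monic relation over a subring on which $v \le 1$, and we had $v(x) > 1$, then the leading term $x^n$ would strictly dominate every lower-order term in the relation, a contradiction.

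The remaining (and main) step is continuity of the extended valuation $v$ for the topology of $B$ whose ring of definition is $B_0 := A_0[f_1/g,\ldots,f_n/g]$ with ideal of definition $I \cdot B_0$. Here I would use that $A_0$ is bounded in $A$ and $v$ is continuous, so $v(A_0)$ is bounded above by some $C \in \Gamma_w$; combined with $v(f_i/g) \le 1$, this shows $v(B_0) \le C$. For any $\gamma \in \Gamma_w$ with $\gamma > 0$, continuity of $w$ on $A$ furnishes $m$ with $w(I^m) < \gamma/C$, and hence $v(I^m B_0) < \gamma$. Since the sets $I^m B_0$ form a basis of open neighborhoods of $0$ in $B$, this verifies continuity of $v$. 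This is the genuinely topological part of the argument — everything earlier is bookkeeping — and it is also the place where the specific choice of topology on $B$ (itself justified by the openness of the ideal generated by $f_1,\ldots,f_n,g$) is essential.
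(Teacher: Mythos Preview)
Your proof follows the same outline as the paper's and is correct except at one point. The reduction to the uncompleted pair, the verification that the image lies in $U$, the unique extension of $w$ to $A[1/g]$, and the check that $v(B^+) \le 1$ all match the paper's argument.

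The gap is in the continuity step. The assertion that boundedness of $A_0$ together with continuity of $w$ forces $w(A_0)$ to be bounded in $\Gamma_w$ is false in general; it fails exactly when $I \subseteq \supp w$. For instance, take $A = A_0 = k[x,z]$ with ideal of definition $I = (x)$ and $A^+ = k + xA$; the valuation $w$ with support $(x)$ and $w(z) = s > 1$ on $A/(x) = k[z]$ is continuous and lies in $\Spa(A,A^+)$, but $w(z^n) = s^n$ is unbounded, so no bound $C$ exists and your inequality $v(I^m B_0) < \gamma$ cannot be obtained that way. The paper avoids this by applying the continuity criterion from \cite{bergdall2024huber}, Proposition~1.2.7.1, directly: one checks that $|b(x)|$ is cofinal in the value group for $b \in I$ (inherited from continuity of $w$ on $A$) and that $|a(x)| < 1$ for all $a \in I B_0$ (which follows from $w(IA_0) < 1$ together with $v(f_i/g) \le 1$). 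This handles both the analytic and the open-support cases uniformly and is the natural repair for your argument.
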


Note that Lemma~\ref{rational_subset_spectrum} allows us to view the rational subset~$U$ as the adic spectrum of the Huber pair $(A_U,A_U^+)$.

\begin{proof}
 It is clear that the morphism is injective since~$A_U$ is the completion of~$A_g$ and the valuations of~$A_g$ form a subset of the valuations of~$A$.
 Also a valuation~$x$ of~$A$ extends to~$A_g$ if and only if $|g(x)| \ne 0$.
 Moreover~$x$ is less than or equal to~$1$ on
 \[
  A^+\left[\frac{f_1,\ldots,f_n}{g}\right]^N
 \]
 if and only if $|f_i(x)| \le |g(x)|$ for $i=1,\ldots,n$.
 This already shows that the image of $\Spa(A_U,A_U^+)$ is contained in~$U$.
 In order to show the converse inclusion, we need to check that for any $x \in U$ its extension to~$A_g$ is continuous.
 Remember that we defined
 \[
  \left(A_0\left[\frac{f_1,\ldots,f_n}{g}\right],IA_0\left[\frac{f_1,\ldots,f_n}{g}\right]\right)
 \]
 to be a pair of definition for $A_g$ for a given pair of definition $(A_0,I)$ of~$A$.
 By the criterion for continuity explained in \cite{bergdall2024huber}, Proposition~1.2.7.1, it is enough to convince ourselves that $|x(b)|$ is cofinal in the value group for $b \in I$ and $|x(a)| < 1$ for all $a \in IA_0[\frac{f_1,\ldots,f_n}{g}]$.
 This follows directly from the condition $x \in U$.
\end{proof}

We now define the presheaf~$\cO_X$ and its subpresheaf~$\cO_X^+$ on the category of rational subsets by
\[
 \cO_X(U) := A_U, \qquad \cO_X^+(U) := A_U^+.
\]
As mentioned before, there are examples where the sheaf condition is not satified (see Example~\ref{not_sheafy} below).
We call a Huber pair $(A,A^+)$ \emph{sheafy} if the structure presheaf $\cO_X$ on $X = \Spa(A,A^+)$ is a sheaf.

\begin{lemma}
 Let $(A,A^+)$ be a sheafy Huber pair and set $X = \Spa(A,A^+)$.
 Then $\cO_X^+$ is a sheaf.
\end{lemma}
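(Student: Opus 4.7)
The plan is to deduce the sheaf property of $\cO_X^+$ from that of $\cO_X$ together with a pointwise characterization of the integral elements. Since $\cO_X^+$ is by construction a subpresheaf of $\cO_X$, and the sections of $\cO_X$ are separated by hypothesis, the separatedness axiom for $\cO_X^+$ is inherited automatically. Only the gluing axiom requires work.

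The key ingredient I would invoke is the following standard fact about Huber pairs (see \cite{Hu93}): for any complete Huber pair $(B,B^+)$ with $B^+$ integrally closed in $B$, one has
\[
 B^+ = \{b \in B \mid |b(x)| \le 1 \text{ for all } x \in \Spa(B,B^+)\}.
\]
Applied to $(A_V,A_V^+)$ for each rational subset $V$, and combined with Lemma~\ref{rational_subset_spectrum} identifying $\Spa(A_V,A_V^+)$ with $V$, this yields
\[
 \cO_X^+(V) = \{s \in \cO_X(V) \mid |s(x)| \le 1 \text{ for all } x \in V\}.
\]
The integral closedness hypothesis applies because, as noted just before Lemma~\ref{spectrum_completion}, the closure of the integrally closed subring $A^+[f_1/g,\ldots,f_n/g]^N$ inside the completion remains integrally closed.

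Granting this characterization, gluing is immediate. Let $U$ be a rational subset with rational cover $\{U_i\}_{i \in I}$, and let $(s_i)_{i \in I}$ with $s_i \in \cO_X^+(U_i)$ be a compatible family. Since $\cO_X$ is a sheaf, there is a unique $s \in \cO_X(U)$ restricting to $s_i$ on each $U_i$. For any $x \in U$, choose $i$ with $x \in U_i$; since the value $|s(x)|$ is computed by pulling $s$ back along the restriction map $\cO_X(U) \to \cO_X(U_i)$, we have $|s(x)| = |s_i(x)| \le 1$. Hence $|s(x)| \le 1$ for every $x \in U$, so by the characterization above $s \in \cO_X^+(U)$.

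The main obstacle is the pointwise characterization of $A_V^+$; however, this is not a sheafiness statement but a purely algebraic property of Huber pairs with integrally closed rings of integral elements, so it is available here without circularity. Everything else in the argument reduces to a one-line application of the sheaf property of $\cO_X$.
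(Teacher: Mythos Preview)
Your proof is correct and follows essentially the same approach as the paper's. Both arguments hinge on the pointwise characterization $\cO_X^+(U) = \{a \in \cO_X(U) \mid |a(x)| \le 1 \text{ for all } x \in U\}$, then deduce separatedness and gluing for $\cO_X^+$ directly from the corresponding properties of $\cO_X$; the paper is simply more terse and cites the characterization as a result from a companion lecture rather than Huber's original paper.
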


\begin{proof}
 For a rational open covering $U = \bigcup_i U_i$ of a rational open $U \subseteq X$ we consider the diagram
 \[
  \begin{tikzcd}
   \cO_X(U)		\ar[r]							& \prod_i \cO_X(U_i)	\ar[r]							& \prod_{ij} \cO_X(U_i \cap U_j)	\\
   \cO_X^+(U)	\ar[r]	\ar[u,hookrightarrow]	& \prod_i \cO_X^+(U_i)	\ar[r]	\ar[u,hookrightarrow]	& \prod_{ij} \cO_X^+(U_i \cap U_j).	\ar[u,hookrightarrow]
  \end{tikzcd}
 \]
 The upper row is exact because $(A,A^+)$ is sheafy.
 It follows immediately that the left lower arrow is injective.
 Exactness in the middle is also clear once we know the following formula, which will be discussed in John's lecture (Theorem~3.3):
 \[
  \cO_X^+(U) = \{a \in \cO_X(U) \mid |a(x)| \le 1~\forall~x \in U\}.
 \]
\end{proof}

It turns out that in basically all known cases where we can show sheafiness of the structure presheaf the proof can be enhanced just a bit to show in addition that the structure sheaf is acyclic.
By this we mean explicitly that the sheaf cohomology groups
\[
 H^i(X,\cO_X)
\]
vanish for $i > 0$.
The following theorem summarizes the known results.
We will discuss them below and also explain all the adjectives showing up in the hypothesis.

\begin{theorem} \label{sheafy_results}
 Let $(A,A^+)$ be a Huber pair satisfying one of the following hypothesis.
 \begin{enumerate}[(i)]
  \item	The topology on~$A$ is discrete,
  \item	$A$ has a noetherian ring of definition,
  \item	$A$ is a strongly noetherian Tate ring,
  \item	$(A,A^+)$ is stably uniform.
 \end{enumerate}
 Then $(A,A^+)$ is sheafy and the structure sheaf is acyclic.
\end{theorem}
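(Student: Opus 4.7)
The plan is a Tate-style \v{C}ech cohomology argument: verify exactness of the \v{C}ech complex on a cofinal family of particularly simple rational coverings, and then bootstrap to general coverings.

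First I would reduce to two special classes of coverings. Any rational covering of a rational subset can be refined by a finite iteration of \emph{Laurent coverings}, of the form $\{|f| \le 1\} \cup \{|f| \ge 1\}$, and \emph{standard rational coverings} coming from elements $f_1,\ldots,f_n$ generating an open ideal, given by the subsets $R(f_1,\ldots,f_n/f_i)$ for $i=1,\ldots,n$. For such coverings both the constituent rational subsets and their iterated intersections have explicit descriptions as Huber pairs $(A_U,A_U^+)$ obtained by localizing, adjoining fractions $f_j/f_i$, integrally closing, and completing. The task then reduces to verifying that the \v{C}ech complex
\[
 0 \to \cO_X(U) \to \prod_i \cO_X(U_i) \to \prod_{i,j} \cO_X(U_i \cap U_j) \to \cdots
\]
is exact for coverings of these two types, where one can moreover peel off the Laurent coverings inductively and concentrate on the standard rational case.

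Next I would treat the four hypotheses in turn. In case (i) the topology is discrete, so completion is the identity and the structure presheaf on rational subsets is essentially the localization presheaf on $\Spec A$ (with an added integral structure); exactness reduces to Serre vanishing on affine schemes. In cases (ii) and (iii) the argument reduces to checking that $I$-adic completion, applied to the algebraic (uncompleted) \v{C}ech complex -- which consists of finitely generated modules over the noetherian ring of definition -- preserves exactness; this is the classical fact that $I$-adic completion is exact on such modules. Case (iii) additionally requires that every $A_U$ remain strongly noetherian Tate, so that the reduction can be iterated freely across the basis of rational subsets. For case (iv), stable uniformity guarantees that each $\cO_X(U)$ is a Banach algebra whose subring of bounded elements coincides with $\cO_X^+(U)$; a functional-analytic argument due to Buzzard--Verberkmoes, refined by Mihara, then yields strict exactness of the \v{C}ech complex.

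The main obstacle, common to all four cases, is controlling the effect of completion on exactness. Before completing, the presheaf on rational subsets is essentially the algebraic structure sheaf of $\Spec A$, whose \v{C}ech complexes are exact by standard arguments, so the entire difficulty is to show that taking the completion of each term preserves this exactness -- and it is precisely this step that fails in the general setting, as will be illustrated in Example~\ref{not_sheafy}. Each hypothesis supplies a different tool to force good behavior: trivialisation of completion in (i), exactness of $I$-adic completion on finitely generated modules in (ii) and (iii), and Banach-algebra functional analysis in (iv). Once the sheaf axiom is in place, acyclicity comes essentially for free from the same argument, since the \v{C}ech complex can be shown to be exact in \emph{all} positive degrees, not merely at the first step, and \v{C}ech cohomology agrees with sheaf cohomology on the basis of rational subsets because this basis is closed under finite intersection.
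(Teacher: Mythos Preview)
Your overall architecture---reduce to a cofinal family of simple coverings and verify \v{C}ech exactness there---matches the paper's strategy, and your treatment of cases~(i) and~(iv) is on target. But there is a genuine gap in how you handle case~(iii).

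You write that in cases~(ii) and~(iii) ``the algebraic (uncompleted) \v{C}ech complex \ldots\ consists of finitely generated modules over the noetherian ring of definition'' and that exactness of $I$-adic completion on such modules does the job. This conflates the two hypotheses. A strongly noetherian Tate ring need \emph{not} have a noetherian ring of definition: for example $A=\CC_p$ is strongly noetherian, but $\cO_{\CC_p}$ is not noetherian (its maximal ideal is not finitely generated). So the tool you invoke for~(iii) is simply unavailable. Even in case~(ii), the terms of the uncompleted complex are localizations such as $A[1/g]$ with rings of definition $A_0[f_1/g,\ldots,f_n/g]$; these are finitely generated $A_0$-\emph{algebras}, not finitely generated $A_0$-modules, so the slogan ``completion is exact on finitely generated modules'' does not apply as stated and needs more care (this is why Huber's proof in~(ii) is not a one-liner).

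The paper handles~(iii) by a different mechanism. After the reduction to a simple Laurent covering $X=W_-\cup W_+$ given by $t\in A$, one writes the four rings explicitly as $\hat A$, $A\langle T\rangle/(T-t)$, $A\langle S\rangle/(tS-1)$, and $A\langle S,T\rangle/(T-t,ST-1)$, and then uses the strongly noetherian hypothesis to prove that $\hat A \to A\langle T\rangle/(T-t)\times A\langle S\rangle/(tS-1)$ is \emph{faithfully flat}, hence injective; the remaining exactness is a direct computation. Strong noetherianness enters via flatness, not via exactness of completion on finitely generated modules. A secondary point: your reduction ends at standard rational coverings, whereas the paper (and the standard route going back to Tate) pushes one step further, from standard rational coverings to Laurent coverings and finally to \emph{simple} Laurent coverings generated by a single element; it is this last reduction that makes the explicit two-term computation above possible.
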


The easiest class of sheafy Huber pairs is the class of Huber pairs with discrete topology.
In this case there is no completion involved and the sheaf condition translates to the sheaf condition for fundamental opens in affine schemes.
So the proof of the theorem is straight forward (see \cite{Wed19}, Theorem~8.28~(c)).

The second case, where $A$ has a noetherian ring of definition, is often referred to as the ``formal scheme setting''.
The reason behind this nomenclature is that there is a fully faithful functor from locally noetherian formal schemes to adic spaces (see \cite{Hu94}, Proposition~4.2).
At the level of affine formal schemes it sends the formal spectrum $\Spf (A^+)$ to $\Spa(A^+,A^+)$.
The affinoids in the essential image of this functor satisfy condition~(ii) of Theorem~\ref{sheafy_results}.
Note that not all affinoids satisfying condition~(ii) come from formal schemes.
This is only the case for Huber pairs $(A,A^+)$ with $A = A^+$.
However, they are closely enough related to formal schemes to make the sheafiness proof for locally noetherian formal schemes work in this case (see \cite{Hu94}, Theorem~2.5).

We will now treat the case of Tate Huber pairs, which is most relevant to us.
Remember that a Huber ring is Tate if it contains a topologically nilpotent unit.
This is the case for the classes of Huber pairs described in (iii) and (iv) in Theorem~\ref{sheafy_results}.

Let us explain the terminology.
A Tate ring~$A$ is called \emph{strongly noetherian} if $A \langle T_1,\ldots,T_n \rangle$ is noetherian for any $n \ge 0$.
Looking at the condition for $n = 0$ this tells us in particular that $\hat{A}$ is noetherian.
An equivalent condition for strong noetherianness is that all $A$-algebras that are topologically of finite type are noetherian.
We call a Tate Huber pair $(A,A^+)$ strongly noetherian if $A$ is strongly noetherian.
Examples include all Tate rings of the form
\[
 k \langle T_1,\ldots,T_n \rangle /I
\]
for a nonarchimedean field~$k$ and any ideal $I \subseteq k \langle T_1,\ldots,T_n \rangle$.
This boils down to the classical result from rigid geometry that $k \langle T_1,\ldots,T_n \rangle$ is noetherian (\cite{BGR}, \S~5.2.6, Theorem~1).
The proof is by no means trivial and involves the Weierstraß division and preparation theorems.

A Tate ring~$A$ is \emph{uniform} if the subset of power bounded elements $A^{\circ}$ is bounded.
A Huber pair $(A,A^+)$ is called \emph{stably uniform} if for all rational subsets $U \subseteq X = \Spa(A,A^+)$ the Huber ring $\cO_X(U)$ is uniform.

There are strongly noetherian Tate Huber pairs that are not stably uniform and vice versa.
For instance for any nonarchimedean field~$k$ with pseudouniformizer~$\varpi$ the Huber ring
\[
 A:= k \langle T \rangle / (T^2)
\]
is strongly noetherian but not uniform.
The reason is that all elements $\varpi^{-n}T$ for $n \in \N$ are nilpotent, hence power bounded.
However, none of the sets
\[
 \left(\varpi^m k^\circ \langle T \rangle/ (T^2) \right) \cdot \{\varpi^{-n}T \mid n \in \N\}
\]
for $m \in \N$ is contained in the open $k^\circ \langle T \rangle /(T^2)$ as would be required if $\{\varpi^{-n}T \mid n \in \N\}$ were bounded.

In fact the same argument shows that more generally a Tate ring~$A$ with pair of definition $(A_0,(\varpi))$ is \emph{not} uniform as soon as it contains a nilpotent element $a$ such that
\[
 \{\varpi^n a \mid n \in \Z\} \nsubseteq A_0.
\]

On the other hand, a large class of non-noetherian stably uniform Tate Huber pairs is given by perfectoid Huber pairs $(A,A^+)$:

\begin{proposition} \label{perfectoid_uniform}
 Any perfectoid Huber pair $(A,A^+)$ is stably uniform.
\end{proposition}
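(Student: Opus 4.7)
The plan is to reduce stable uniformity to the stronger statement that the class of perfectoid Huber pairs is closed under rational localization: if $(A_U,A_U^+)$ is itself perfectoid for every rational $U\subseteq X=\Spa(A,A^+)$, then in particular $A_U^\circ$ is bounded for every such $U$, which is stable uniformity. So the real content is showing that for each rational subset $U = R(\frac{f_1,\ldots,f_n}{g})$, the Huber pair $(A_U,A_U^+)$ constructed in the previous section is again perfectoid.

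First I would handle the characteristic-$p$ case, where a perfectoid Tate ring is the same as a complete uniform Tate ring whose Frobenius is bijective (i.e., a perfect complete Tate ring). Here one has to be careful because the naive localization $A[\tfrac{f_1}{g},\ldots,\tfrac{f_n}{g}]$ need not be perfect; instead one replaces it by the $\varpi$-adic completion of its perfection $A[\tfrac{f_1}{g},\ldots,\tfrac{f_n}{g}]^{\mathrm{perf}}$. A direct verification, using that $\varpi$ is a topologically nilpotent unit and that Frobenius commutes with $\varpi$-adic completion on the perfection, shows that this is the same Huber ring as $A_U$ (because on $\Spa$ they define the same rational subset, and the universal property of the rational localization pins it down up to canonical isomorphism). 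Perfectness, uniformity, and the Tate condition are then straightforward to check.

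For the mixed-characteristic case I would invoke the tilting equivalence of Scholze/Kedlaya--Liu: for a perfectoid $(A,A^+)$ one has its tilt $(A^\flat,A^{\flat+})$, and the tilting map induces a homeomorphism $\Spa(A,A^+)\cong \Spa(A^\flat,A^{\flat+})$ which identifies rational subsets and carries the rational localization $A_U$ to $(A_U)^\flat = (A^\flat)_{U^\flat}$. By the previous paragraph, $(A^\flat)_{U^\flat}$ is perfectoid in characteristic~$p$. Then one applies the untilting half of the tilting equivalence to conclude that $A_U$ is perfectoid in mixed characteristic; in particular it is uniform. This gives stable uniformity of $(A,A^+)$.

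The main obstacle is precisely the compatibility step: checking that the tilting procedure is compatible with rational localization, i.e.\ that the tilt of the completed rational localization agrees with the completed rational localization of the tilt. This is a nontrivial functoriality statement because it mixes the almost-purity setup (needed to control $A^\circ/\varpi$ and $A^{\flat\circ}/\varpi^\flat$) with the $\varpi$-adic completion step that defines $A_U$. Once one has that compatibility, together with the fact that perfectoidness is preserved under the construction in characteristic $p$, stable uniformity follows formally. Everything else — Tate-ness of $A_U$, that $\varpi$ remains a pseudouniformizer, and that $A_U^+$ is integrally closed — is immediate from the construction in the previous section.
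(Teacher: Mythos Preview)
Your proposal is correct and follows the same overall strategy as the paper: both argue that perfectoid rings are uniform by definition and that the class of perfectoid Huber pairs is closed under rational localization, so stable uniformity follows. The only difference is that the paper simply cites this closure property as a black box (Theorem~3.1.4 in Heuer's notes), whereas you sketch its proof via the characteristic-$p$ case plus the tilting equivalence; your outline of that argument is the standard one and is fine as a sketch, with the compatibility of tilting and rational localization indeed being the substantive step.
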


\begin{proof}
 A perfectoid ring is uniform by definition (see Ben's notes, Definition~2.1.1).
 Moreover, by Theorem~3.1.4 in Ben's notes, for any rational open $U \subseteq \Spa(A,A^+)$ the ring $A_U$ is again perfectoid, hence uniform.
 We conclude that perfectoid Huber pairs are stably uniform.
\end{proof}

Let us get back to the question of sheafiness for Tate Huber pairs.
The following example shows that there are indeed non-sheafy Tate Huber pairs.

\begin{example}[\cite{BuzVer}, \S 4.1] \label{not_sheafy}
 Let~$k$ be a complete non-Archimedean field and
 \[
  A = k[T,T^{-1},Z]/(Z^2).
 \]
 We turn~$A$ into a Huber $k$-algebra by considering the ring of definition~$A_0$ generated as an $k^\circ$-module by
 \[
  \varpi^{|n|}T^n,	\qquad \varpi^{-|n|}T^nZ,	\quad \forall~n \in \ZZ.
 \]
 Then~$A_0$ is not noetherian as
 \[
  ZA \cap A_0 = (Z,\frac{Z}{\varpi T},\frac{Z}{\varpi^2 T^2},\ldots)
 \]
 is not finitely generated.
 
 We consider the cover of $X = \Spa(A,A^\circ)$ by the two opens
 \[
  U = \{x \in X \mid |T(x)| \le 1\}, \qquad V = \{x \in X \mid |T(x)| \ge 1\}.
 \]
 We claim that
 \[
  \cO_X(X) \to \cO_X(U) \oplus \cO_X(V)
 \]
 is not injective.
 We have
 \begin{align*}
  \cO_X(X) &= \hat{A} = \lim_n A/(\varpi^n A_0) \\
  \cO_X(U) &= \lim_n A/(\varpi^n A_0[T]) \\
  \cO_X(V) &= \lim_n A/(\varpi^n A_0[T^{-1}]).
 \end{align*}
 As $\varpi^{-n}Z \notin A_0$ for any $n \in \NN$, we know that $Z \ne 0$ in $\cO_X(X)$.
 But
 \begin{align*}
  \varpi^{-n}Z &= (\varpi^{-n}T^{-n}Z)T^n \in A_0[T], \\
  \varpi^{-n}Z &= (\varpi^{-n}T^nZ)T^{-n} \in A_0[T^{-1}].
 \end{align*}
 Hence, $Z = 0$ on~$U$ and on~$V$.
\end{example}

In Example~\ref{not_sheafy} even though~$A$ is noetherian, it is not strongly noetherian.
Neither is~$A$ uniform because $Z$ is nilpotent and $\varpi^{-m}Z$ for $m \ge 1$ is not contained in $A_0$.

Let us now outline the general strategy to study the
question of sheafiness for a Tate Huber pair.
As mentioned before we show sheafiness and acyclicity at the same time.
Let $X = \Spa(A,A^+)$ for a Tate Huber pair $(A,A^+)$.
For an open covering $U = \bigcup_i U_i$ for rational subsets~$U$ and~$U_i$ we consider the sequence
\begin{equation} \label{sequence_acyclicity}
 0 \to \cO_X(U) \to \prod_i \cO_X(U_i) \to \prod_{ij} \cO_X(U_i \cap U_j) \to \prod_{ijk} \cO_X(U_i \cap U_j \cap U_k) \to \ldots
\end{equation}
Exactness at the first two terms gives the sheaf condition for the covering $U = \bigcup_i U_i$ and exactness at all other terms is equivalent to the vanishing of the \u{C}ech cohomology groups
\[
 \check{H}^i(U,\cO_X)
\]
for $i > 0$.
It is a general fact (see \cite[Tag 01EV]{stacks-project}) that in order to show that the sheaf cohomology groups $H^i(X,\cO_X)$ vanish, it suffices to show that all \u{C}ech cohomology groups as above for all coverings $U = \bigcup_i U_i$ vanish.

In the proof for sheafiness the first step is to reduce to the case of very simple coverings.

\begin{definition}
 Let $(A,A^+)$ be a Tate Huber pair and set $X = \Spa(A,A^+)$.
 \begin{itemize}
  \item	Let $t_1,\ldots,t_n$ be elements of~$A$ generating the unit ideal.
		The \emph{standard rational covering} generated by $t_1,\ldots,t_n$ is the covering of~$X$ by
		\[
		 U_i = R\left(\frac{t_1,\ldots,t_n}{t_i}\right),
		 \quad\text{where } i\in\{1,\ldots,n\}.
		\]
  \item	Let $t_1,\ldots,t_n$ be in~$A$.
		The \emph{standard Laurent covering} generated by $t_1,\ldots,t_n$ is the covering of~$X$ by
		\[
		 U_I = \{x \in X \mid |t_i(x)| \le 1~\forall i \in I,~|t_i| \ge 1~\forall~i \notin I\},
		 \quad \text{where }I\subseteq\{1,\ldots,n\}.
		\]
  \item	A \emph{simple} Laurent covering is a standard Laurent covering generated by one element.
 \end{itemize}
\end{definition}

It is clear that Laurent coverings are indeed coverings.
In case of standard rational coverings this is ensured by the condition that $t_1,\ldots,t_n$ generate the unit ideal:
For $x \in X$ we take $t_i$ such that $|t_i(x)|$ is maximal among all $|t_j(x)|$.
Because $t_1,\ldots,t_n$ generate the unit ideal, we must have $|t_i(x)| \ne 0$.
By definition this implies $x \in U_i$.

\begin{lemma} \label{Laurent_cover}
 Let $(A,A^+)$ be a complete Tate Huber pair and $X = \Spa(A,A^+)$.
 \begin{enumerate}[(i)]
  \item Every covering of~$X$ has a refinement by a standard rational covering.
  \item	For every standard rational covering $\cU$ of~$X$ there exists a Laurent covering~$\cV$ of~$X$ such that for every $V \in \cV$ the covering $\cU_V:= \{V \cap U \mid U \in \cU\}$ is a standard rational covering generated by units.
  \item For every standard rational covering $\cU$ of~$X$ generated by units, there is a Laurent covering~$\cV$ of~$X$ refining~$\cU$.
 \end{enumerate}
\end{lemma}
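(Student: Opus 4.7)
For part (i), the plan is to use that rational subsets form a basis of the topology of $X = \Spa(A, A^+)$ together with the quasi-compactness of $X$ (as a spectral space) to reduce to the problem of refining a finite covering by rational subsets $U_j = R\bigl(\frac{f_{j,1},\ldots,f_{j,n_j}}{g_j}\bigr)$ for $j = 1,\ldots,m$. Since these cover $X$, the $g_j$ cannot all vanish simultaneously, and in the complete Tate setting this forces $(g_1,\ldots,g_m) = A$. Setting $h_{j,0} := g_j$ and $h_{j,i} := f_{j,i}$ for $i \geq 1$, I would form the standard rational cover generated by the products $\{h_{1,i_1} h_{2,i_2} \cdots h_{m,i_m}\}$: these generate the unit ideal, and by choosing, at each point $x$, an index $i_j$ achieving the maximum of $|h_{j,i}(x)|$ for each $j$, one places $x$ in a piece $W_\alpha$ that lies in $U_j$ for any $j$ with $i_j = 0$; the ``boundary'' pieces where no such $j$ exists are handled by a short extra argument comparing $|f_{j,\alpha_j}(x)|$ to $|g_j(x)|$ and using that $x$ must lie in at least one $U_j$.

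For part (iii), given $\cU = \{U_i\}$ with $U_i = R\bigl(\frac{u_1,\ldots,u_n}{u_i}\bigr)$ and all $u_i \in A^\times$, the ratios $u_i u_j^{-1}$ all lie in $A$. Taking $\cV$ to be the Laurent covering of $X$ generated by $\{u_i u_j^{-1} : i \ne j\}$ (or any subfamily sufficient to decide the total order of the $|u_k|$), the relative sizes of the $|u_k(x)|$ are constant on each piece $V$, so some index $i_0(V)$ realizes the maximum. Since $u_{i_0}$ is a unit, $|u_{i_0}(x)| \ne 0$ on $V$, and the inequalities $|u_j(x)| \le |u_{i_0}(x)|$ imply $V \subseteq U_{i_0(V)}$.

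Part (ii) is the main obstacle. Fix a topologically nilpotent unit $\varpi \in A$. Since $t_1,\ldots,t_n$ generate an open ideal of a complete Tate ring, they generate the unit ideal, so one can write $\varpi^N = \sum_i a_i t_i$ with $a_i \in A^+$ after enlarging $N$ if necessary (using that $\varpi^M \cdot A^\circ \subseteq A^{\circ\circ} \subseteq A^+$ for $M$ large enough, so that $|a_i(x)| \le 1$ pointwise). The plan is then to take $\cV$ to be the Laurent covering generated by the elements $\varpi^{-N} t_1, \ldots, \varpi^{-N} t_n$. On each piece $V_I$ with $I \subsetneq \{1,\ldots,n\}$, the Laurent condition $|\varpi^{-N} t_i| \ge 1$ for $i \notin I$ makes each such $t_i$ a unit in $A_{V_I}$; the identity $\varpi^N = \sum a_i t_i$ combined with the bound $|a_i(x)| \le 1$ forces $\max_i |t_i(x)| \ge |\varpi(x)|^N$, and one deduces that $V_I$ is already covered by $\{U_j \cap V_I\}_{j \notin I}$, so that $\cU_{V_I}$ can be realized as a standard rational covering generated by the units $\{t_j\}_{j \notin I}$. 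The degenerate piece $V_{\{1,\ldots,n\}}$, where all $|t_i| \le |\varpi|^N$, is handled either by taking $N$ slightly larger (so that this piece shrinks to one where the identity forces a further unit condition) or by a direct inspection of the forced equality $\max_i |t_i| = |\varpi|^N$ on this stratum. The precise control of these boundary Laurent strata is the technical heart of the proof.
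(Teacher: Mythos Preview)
The paper does not give its own proof of this lemma; it simply cites Morel's notes (Proposition~IV.2.3.3). Your sketch follows the standard argument that one finds there (and, ultimately, in BGR~\S8.2 and Huber's original paper), so there is no divergence of approach to discuss.

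A few remarks on the details. In part~(ii) your hedging at the end is unnecessary: the clean version is to write $1 = \sum_i a_i t_i$, choose $N$ large enough that $\varpi^N a_i \in A^{\circ\circ}$ for all~$i$ (possible since $\varpi$ is topologically nilpotent), and then take the Laurent covering generated by $\varpi^{-N} t_1,\ldots,\varpi^{-N} t_n$. On the piece $V_{\{1,\ldots,n\}}$ where all $|t_i| \le |\varpi|^N$, the identity gives $1 \le \max_i |a_i(x)|\,|t_i(x)| < |\varpi(x)|^{-N}\cdot |\varpi(x)|^N = 1$, a contradiction; so this piece is empty and no separate argument is needed. Also note that on a nonempty $V_I$ with $I \subsetneq \{1,\ldots,n\}$, the full covering $\cU_{V_I}$ still has $n$ members, but for $j \in I$ the piece $U_j \cap V_I$ is contained in $U_k \cap V_I$ for every $k \notin I$; so $\cU_{V_I}$ coincides (as a set of subsets) with the standard rational covering of $V_I$ generated by the units $\{t_j\}_{j \notin I}$, which is what the application requires.

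In part~(i) the point you flag about ``boundary pieces'' (multi-indices $\alpha$ with no $\alpha_j = 0$) is genuine but easy: if such a $W_\alpha$ is nonempty, pick $y \in W_\alpha$; it lies in some $U_{j_0}$, and comparing $|h_{j_0,\alpha_{j_0}}(y)|$ with $|h_{j_0,0}(y)| = |g_{j_0}(y)|$ forces equality, so $W_\alpha \subseteq W_{\alpha'}$ for $\alpha'$ obtained by replacing $\alpha_{j_0}$ by~$0$. This reduces to the case already handled.
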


\begin{proof}
 See \cite{Morel_adic_spaces}, Proposition~IV.2.3.3.
\end{proof}
 
Lemma~\ref{Laurent_cover} tells us that it is enough to show exactnes of sequence~(\ref{sequence_acyclicity}) for Laurent coverings.
Indeed if we start with some covering of~$X$, we can refine it by a standard rational covering by part~(i) of the lemma.
It is formal to check that exactness of sequence (\ref{sequence_acyclicity}) for a refinement of the covering implies exactness for the original covering.
We are thus reduced to deal with a standard rational covering~$\cU$.
By part~(ii) of Lemma~\ref{Laurent_cover} we can find a Laurent covering~$\cV$ of~$X$ such that for every $V \in \cV$ the restriction~$\cU_V$ is a standard rational covering of~$V$ generated by units.
Now we can convince ourselves that it is enough to prove exactness of sequence~(\ref{sequence_acyclicity}) for the Laurent covering~$\cV$ and for each of the restricted coverings~$\cU_V$.
But standard rational coverings that are generated by units can be refined by Laurent coverings by part~(iii) of the lemma.
We are thus reduced to Laurent coverings.
For a Laurent covering~$\cU$ generated by $t_1,\ldots,t_n$ we consider the simple Laurent covering $X = W_- \cup W_+$ with
\[
 W_- = \{x \in X \mid |t_n(x)| \le 1\},	\qquad W_+ =\{x \in X \mid |t_n(x)| \ge 1\}.
\]
The restrictions $\cU_{W_-}$ and $\cU_{W_+}$ are Laurent coverings generated by $t_1,\ldots,t_{n-1}$.
We can thus use induction to conclude that it is enough to prove exactness of sequence~(\ref{sequence_acyclicity}) for simple Laurent coverings.

We summarize our arguments in the following lemma.

\begin{lemma} \label{reduction_simple_Laurent}
 Let $(A,A^+)$ be a Tate Huber pair and set $X = \Spa(A,A^+)$.
 Suppose that for any rational open $W \subseteq \Spa(A,A^+)$ and any simple Laurent covering $W=W_- \cup W_+$, the sequence
 \begin{equation} \label{exactness_structure_sheaf}
  0 \to \cO_X(W) \to \cO_X(W_-) \oplus \cO_X(W_+) \to \cO_X(W_- \cap W_+) \to 0
 \end{equation}
 is exact.
 Then $(A,A^+)$ is sheafy and $\cO_X$ is acyclic.
\end{lemma}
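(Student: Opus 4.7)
The plan is to deduce sheafiness and acyclicity simultaneously by showing that, for every rational open $W \subseteq X$ and every covering $W = \bigcup_i W_i$, the augmented \v{C}ech complex
\[
 0 \to \cO_X(W) \to \prod_i \cO_X(W_i) \to \prod_{i<j} \cO_X(W_i \cap W_j) \to \cdots
\]
is exact. Exactness at the first two positions gives the sheaf condition, and vanishing of the higher \v{C}ech groups on all open covers of all rational opens yields the vanishing of $H^i(X,\cO_X)$ via \cite[Tag 01EV]{stacks-project}.

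The key technical ingredient is the following \emph{refinement principle}: if $\cV$ refines a cover $\cU$ of a rational open $W$, and if both the augmented \v{C}ech complex $\check{C}^\bullet(\cV,\cO_X)$ and each $\check{C}^\bullet(\cV|_U,\cO_X)$ for $U \in \cU$ are exact, then $\check{C}^\bullet(\cU,\cO_X)$ is exact. This is the classical Cartan--Leray double complex argument: one forms the bi-\v{C}ech complex $\check{C}^{p,q}(\cU,\cV,\cO_X) = \prod \cO_X(U_{i_0 \cdots i_p} \cap V_{j_0 \cdots j_q})$; the first hypothesis gives exactness of all rows, the second exactness of all columns, and a standard diagram chase on the totalization transports exactness from $\check{C}^\bullet(\cV,\cO_X)$ to $\check{C}^\bullet(\cU,\cO_X)$.

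With this principle I follow the cascade of reductions outlined in the excerpt. Lemma~\ref{Laurent_cover}~(i) refines any cover by a standard rational one, while parts~(ii) and (iii) sandwich each standard rational cover between Laurent covers whose further restrictions are again Laurent; two successive applications of the refinement principle reduce us to proving exactness for Laurent coverings on every rational open. I then induct on the number $n$ of generators, with $n=1$ being the hypothesis. For the inductive step, given a Laurent cover $\cU$ of $W$ generated by $t_1,\ldots,t_n$, form the common refinement $\cV = \{U \cap W_\pm : U \in \cU\}$ with the simple Laurent cover $\{W_-,W_+\}$ coming from $t_n$. One application of the refinement principle, using that $\cV|_{W_\pm}$ is a Laurent cover of $W_\pm$ in $t_1,\ldots,t_{n-1}$ (exact by IH) and that $\{W_-,W_+\}$ is a simple Laurent cover (exact by hypothesis), shows $\check{C}^\bullet(\cV,\cO_X)$ is exact. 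Another application, using that each $\cV|_U = \{U \cap W_-,\, U \cap W_+\}$ is itself a simple Laurent cover of $U$, then yields the exactness for $\cU$.

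The principal obstacle is establishing the refinement principle in the present topological setting. The double complex argument is classical, but one has to be careful since the sections $\cO_X(W)$ are complete topological $A$-modules and the \v{C}ech differentials need not be strict maps. However, the assertion to be proved is purely algebraic exactness (a sequence of abelian groups), so the diagram chase goes through unchanged; completions only appear inside the individual terms of the complex, not in the execution of the chase. Once this step is secured, the remaining reductions are entirely formal, and acyclicity of $\cO_X$ follows from the vanishing of \v{C}ech cohomology by the Stacks Project reference above.
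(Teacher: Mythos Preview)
Your argument is correct and follows essentially the same route as the paper: both use Lemma~\ref{Laurent_cover} to reduce from arbitrary covers to standard rational covers, then to Laurent covers, and finally run the induction on the number of generators via the simple Laurent cover defined by $t_n$. Your explicit formulation of the Cartan--Leray double complex (``refinement principle'') is exactly what the paper has in mind when it says ``it is formal to check'' and ``we can convince ourselves''; you have simply unpacked those phrases. One small precision: as stated, your refinement principle requires the exactness of $\check{C}^\bullet(\cV|_{U_\sigma},\cO_X)$ for \emph{all finite intersections} $U_\sigma = U_{i_0} \cap \cdots \cap U_{i_p}$, not just single members $U \in \cU$, since that is what makes every column of the bicomplex exact. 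In each application this is harmless because such intersections are again rational opens and the relevant hypothesis is assumed for all rational opens, but the principle should be phrased accordingly.
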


We can now discuss case~(iii) of Theorem~\ref{sheafy_results}, where~$A$ is a strongly noetherian Tate ring.
The result goes back to Tate's acyclicity theorem first formulated in the language of Tate's rigid analytic spaces.
Huber gave a proof in the more general setting of strongly noetherian Tate Huber pairs (see \cite{Hu94}, Theorem~2.2) using the same approach.

\begin{proof}[Idea of the proof]
First note that being strongly noetherian is stable under passing to a rational open subset
\[
 W = R\left(\frac{f_1,\ldots,f_n}{g}\right) \subseteq \Spa(A,A^+).
\]
with $(f_1,\ldots,f_n) = A$.
The reason is that we can write $\cO_X(W)$ as
\[
 \cO_X(W) = A \langle T_1,\ldots,T_n \rangle / (g T_i - f_i).
\]
Therefore, we need to check the hypothesis of Lemma~\ref{reduction_simple_Laurent} only for $W= X = \Spa(A,A^+)$.
We consider a simple Laurent covering defined by an element $t \in A$, i.e., the covering of $X$ by
 \[
  W_- = \{x \in X \mid |t(x)| \le 1\}, \qquad W_+ = \{x \in X \mid |t(x)| \ge 1\}.
 \]
 We have
 \begin{align*}
  \cO_X(X)	 &= \hat{A} \\
  \cO_X(W_-) &= A\langle T \rangle /(T-t)	\\
  \cO_X(W_+) &= A\langle S \rangle /(tS-1)	\\
  \cO_X(W_- \cap W_+) &= A\langle S,T\rangle /(T-t, ST-1)
 \end{align*}
 The strongly noetherian assumption enters when showing that
 \[
  \hat{A} \to A\langle T \rangle /(T-t) \times A\langle S \rangle /(tS-1)
 \]
 is faithfully flat, hence injective.
 Showing the remaining exactness properties comes down to an explicit computation.
\end{proof}

Finally we want to explain the main argument for sheafiness in case~(iv) of Theorem~\ref{sheafy_results}, where $(A,A^+)$ is stably uniform.
The proof is due to Kevin Buzzard and Alain Verberkmoes and we refer to their article (\cite{BuzVer}, Proposition~12) for the details.

\begin{proof}[Idea of the proof]
 By definition any rational subset of $X = \Spa(A,A^+)$ is again stably uniform.
 This reduces us to checking the hypothesis of Lemma~\ref{reduction_simple_Laurent} for a simple Laurent covering of $W = X$.
 Explicitly this is a covering of the form
 \[
  X = \{x \in X \mid |t(x)| \le 1\} \cup \{x \in X \mid |t(x)| \ge 1\} =: W_- \cup W_+.
 \]
 We consider the following topological rings with topology given by the respective ring of definition and ideal of definition generated by $\varpi$:
 \begin{align*}
  A_- = A,			& \qquad (A_-)_0 = A_0[t]	\\
  A_+ = A[1/t],		& \qquad (A_+)_0 = A_0[1/t]	\\
  A_\pm = A[1/t],	& \qquad (A_\pm)_0 = A_0[t,1/t].
 \end{align*}
 (Here we are a bit sloppy because the homomorphims $\phi: A \to A[1/t]$ might not be injective, so we should write $(A_+)_0 = \phi(A_0)[1/t]$ etc.)
 Then
 \begin{align*}
 	\cO_X(X) 		&= \hat{A} \\
 	\cO_X(W_-) 		&= \widehat{A_-}	\\
 	\cO_X(W_+) 		&=\widehat{A_+}	\\
 	\cO_X(W_- \cap W_+)	&= \widehat{A_\pm}
 \end{align*}
 The sequence
 \[
 	0 \to A \overset{\epsilon}{\longrightarrow} A_- \oplus A_+ \overset{\delta}{\longrightarrow} A_\pm \to 0
 \]
 is obviously exact and completion gives the sequence~(\ref{exactness_structure_sheaf}).
 However, completion might destroy exactness if~$\epsilon$ and~$\delta$ are not strict (quotient topology on image coming from the source is the same as subspace topology coming from the target).
 The map $\delta$ is always strict because it is surjective and the image
 of $(A_-)_0\oplus(A_+)_0$ is $(A_\pm)_0$ and thus open; therefore the sequence (\ref{exactness_structure_sheaf}) is exact if and only if~$\epsilon$ is strict.
 The subspace topology on~$A$ induced from $A_- \oplus A_+$ is given by the ring of definition $S_0 = (A_-)_0 \cap (A_+)_0$ and strictness means that the sets $\varpi^n A_0$ are open with respect to the topology defined by~$S_0$.
 This boils down to checking that there is $m \in \N$ such that
 \[
  \varpi^m S_0 \subseteq A_0
 \]
 One can show that $S_0 \subseteq A^\circ$.
 By hypothesis $A$ is uniform, so $A^\circ$, hence $S_0$, is bounded.
 This implies the claim.
\end{proof}

\begin{corollary}
 Every perfectoid Huber pair $(A,A^+)$ is sheafy and the structure sheaf is acyclic.
\end{corollary}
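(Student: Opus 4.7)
The plan is essentially immediate: this corollary is designed to be a direct composition of two results already established in the excerpt, so I would simply chain them together rather than redo any real work.

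First I would invoke Proposition~\ref{perfectoid_uniform}, which tells us that any perfectoid Huber pair $(A,A^+)$ is stably uniform. This already does the conceptual heavy lifting, since its proof uses the nontrivial fact (cited from Ben's notes) that the rings $A_U$ attached to rational subsets of a perfectoid adic spectrum are themselves perfectoid, hence uniform. I would not reprove any of that.

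Second, I would apply case~(iv) of Theorem~\ref{sheafy_results}, which states that any stably uniform Huber pair is sheafy and has acyclic structure sheaf. Combining the two gives the claim in one line: $(A,A^+)$ perfectoid $\Rightarrow$ $(A,A^+)$ stably uniform $\Rightarrow$ $(A,A^+)$ sheafy with acyclic $\cO_X$.

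There is no real obstacle here; the only thing worth a sentence of commentary is the reminder that the acyclicity part, not just sheafiness, is also delivered by Theorem~\ref{sheafy_results}(iv), which was precisely the point of proving sheafiness via the Buzzard--Verberkmoes exactness argument for simple Laurent coverings (reduced via Lemma~\ref{reduction_simple_Laurent}) rather than by an ad hoc route. So the proof I would write consists of nothing more than citing Proposition~\ref{perfectoid_uniform} and Theorem~\ref{sheafy_results}(iv) in sequence.
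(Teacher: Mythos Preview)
Your proposal is correct and matches the paper's proof essentially verbatim: the paper cites Proposition~\ref{perfectoid_uniform} to get stable uniformity and then invokes case~(iv) of Theorem~\ref{sheafy_results} to conclude sheafiness and acyclicity.
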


\begin{proof}
 This follows from case~(iv) of Theorem~\ref{sheafy_results} since perfectoid Huber pairs are stably uniform by Proposition~\ref{perfectoid_uniform}.
\end{proof}

\section{Adic spaces}

In this section we explain how to glue adic spectra of sheafy Huber pairs to obtain adic spaces.
The process is similar to what is done for defining schemes.
Remember that a scheme is a locally ringed space that is locally isomorphic to the spectrum of a ring.
An adic space is also a locally ringed space but it has additional structure.
So instead of the category of locally ringed spaces we work in the category of locally $v$-ringed spaces, which is defined as follows.

\begin{itemize}
 \item	Objects are triples $(X,\cO_X,\{v_x\}_{x \in X})$, where $(X,\cO_X)$ is a locally topologically ringed space and for each $x \in X$ we have an equivalence class of continuous valuations $v_x$ on the local ring $\cO_{X,x}$ whose support is the maximal ideal.
 \item	Morphisms
		\[
		 (X,\cO_X,\{v_x\}_{x \in X}) \longrightarrow (Y,\cO_Y,\{v_y\}_{y \in Y})
		\]
		are morphisms of locally topologically ringed spaces $(X,\cO_X) \to (Y,\cO_Y)$ such that for a point $x \in X$ mapping to $y \in Y$ we have $v_x|_{\cO_{Y,y}} = v_y$.
\end{itemize}

The collection of valuations $\{v_x\}_{x \in X}$ determines a subsheaf $\cO_X^+$ of $\cO_X$ by setting for an open $U \subseteq X$
\[
 \cO_X^+(U) = \{a \in \cO_X(U) \mid v_x(a_x) \le 1~\forall~x \in U\}.
\]
Every sheafy Huber pair $(A,A^+)$ determines a locally $v$-ringed space by setting $X = \Spa(A,A^+)$ and considering the triple $(X,\cO_X,\{v_x\}_{x \in X})$, where the valuations~$v_x$ are determined by~$x$ in the following way.
For every rational open neighborhood $U \subseteq X$ of~$x$, $x$ comes from a unique valuation on $\cO_X(U)$ (see Lemma~\ref{rational_subset_spectrum}).
In the colimit over all neighborhoods we obtain the valuation~$v_x$ on $\cO_{X,x}$.
We call a locally $v$-ringed space arising in this way an \emph{affinoid adic space}.

A morphism of $v$-ringed spaces $(X,\cO_X,\{v_x\}_{x \in X}) \to (Y,\cO_Y,\{v_y\}_{y \in Y})$ is an \emph{open immersion} if it induces a homeomorphism of~$X$ onto an open subspace $U \subseteq Y$ and an isomorphism
\[
 (X,\cO_X,\{v_x\}_{x \in X}) \overset{\sim}{\longrightarrow} (U,\cO_Y|_U,\{v_u\}_{u \in U}).
\]
Inclusions of rational opens in an affinoid naturally give rise to open immersions of locally $v$-ringed spaces.

\begin{definition}
 An adic space is a locally $v$-ringed space $(X,\cO_X,\{v_x\}_{x \in X})$ that admits an open covering $X = \bigcup_i U_i$ by affinoid adic spaces.
\end{definition}

Adic spaces can be obtained from affinoid adic spaces by glueing just in the same way as for schemes.

\begin{lemma} \label{glue_adic_spaces}
 Suppose we are given
 \begin{itemize}
  \item an index set~$I$,
  \item for every $i \in I$ an adic space~$U_i$,
  \item for every $i,j \in I$ open subspaces $U_{ij} \subseteq U_i$ and $U_{ji} \subseteq U_j$ and an isomorphism $\varphi_{ij}:U_{ij} \to U_{ji}$
 \end{itemize}
 satisfying $\varphi_{ij}^{-1}(U_{ji} \cap U_{jk}) = U_{ij} \cap U_{ik}$ and the cocyle condition:
 For $i,j,k \in I$ the diagram
 \[
  \begin{tikzcd}
   U_{ij} \cap U_{ik}	\ar[rr,"\varphi_{ik}"]	\ar[dr,"\varphi_{ij}"']	&													& U_{ki} \cap U_{kj}	\\
																		& U_{ji} \cap U_{jk}	\ar[ur,"\varphi_{jk}"']
  \end{tikzcd}
 \]
 commutes.
 Then there is an adic space~$X$ and open immersions $\varphi_i:U_i \to X$ satisfying
 \begin{itemize}
  \item $X = \bigcup_i \varphi(U_i)$,
  \item $\varphi_i$ induce isomorphisms $U_{ij} \overset{\sim}{\to} \varphi_i(U_i) \cap \varphi_j(U_j)$,
  \item the diagram
		\[
		 \begin{tikzcd}
		  U_{ij}	\ar[rr,"\varphi_{ij}"]	\ar[dr,"\varphi_i"']	&	& U_{ji}	\ar[dl,"\varphi_j"]	\\
																	& X	\\
		 \end{tikzcd}
		\]
		commutes.
 \end{itemize}
\end{lemma}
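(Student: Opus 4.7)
The plan is to mimic the standard gluing construction for schemes, adapted to the category of locally $v$-ringed spaces, in three stages: first assemble the underlying topological space, then glue the structure sheaves, and finally transport the valuations on stalks.

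For the underlying space, I would form the disjoint union $\bigsqcup_i U_i$ and declare $x \in U_i$ equivalent to $y \in U_j$ whenever $x \in U_{ij}$ and $y = \varphi_{ij}(x)$. Before checking this is an equivalence relation, it is useful to extract from the cocycle condition that $\varphi_{ii} = \id_{U_i}$ (setting $i=j=k$) and $\varphi_{ji} = \varphi_{ij}^{-1}$ (setting $k=i$); then reflexivity and symmetry are immediate, and transitivity follows from the cocycle condition together with the compatibility $\varphi_{ij}^{-1}(U_{ji} \cap U_{jk}) = U_{ij} \cap U_{ik}$. Let $X$ denote the quotient with the quotient topology and $\varphi_i \colon U_i \to X$ the canonical maps. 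A routine calculation shows that for open $V \subseteq U_i$ the preimage of $\varphi_i(V)$ in $\bigsqcup_j U_j$ is $V \sqcup \bigsqcup_{j \ne i} \varphi_{ji}^{-1}(V \cap U_{ij})$, which is open; this makes $\varphi_i$ an open topological embedding and identifies $\varphi_i(U_{ij})$ with $\varphi_i(U_i) \cap \varphi_j(U_j)$.

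Next I would glue the structure sheaves. The isomorphisms $\varphi_{ij}$ come with isomorphisms of sheaves of topological rings $\cO_{U_j}|_{U_{ji}} \overset{\sim}{\to} \cO_{U_i}|_{U_{ij}}$ satisfying the analogous cocycle condition. Applying the standard sheaf gluing lemma on the open cover $\{\varphi_i(U_i)\}$ of $X$ produces a sheaf of topological rings $\cO_X$ together with isomorphisms $\varphi_i^{-1} \cO_X \cong \cO_{U_i}$ compatible with the $\varphi_{ij}$. Since stalks are computed locally, $(X,\cO_X)$ inherits the structure of a locally topologically ringed space from the $U_i$. For each $x \in X$ and any representative $x_i \in U_i$ of $x$, the valuation $v_{x_i}$ on $\cO_{U_i,x_i} \cong \cO_{X,x}$ provides a candidate $v_x$; the hypothesis that the $\varphi_{ij}$ are morphisms of locally $v$-ringed spaces guarantees that different representatives yield equivalent valuations, and the support condition is local so it is inherited. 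The resulting triple $(X,\cO_X,\{v_x\}_{x \in X})$ is an adic space because each $U_i$ has an open cover by affinoid adic spaces, which transports to an open cover of $X$, and the three listed properties of the $\varphi_i$ are either built into the construction or follow from the preceding paragraph.

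The main obstacle is not conceptual but bookkeeping: one must carefully verify that every piece of structure (the topology on $\cO_X$, the sheaf restriction maps, and the valuations on stalks) is preserved by the identifications. Everything hinges on the $\varphi_{ij}$ being isomorphisms of locally $v$-ringed spaces rather than mere homeomorphisms. The small but important extraction of $\varphi_{ii} = \id$ and $\varphi_{ji} = \varphi_{ij}^{-1}$ from the cocycle condition deserves an explicit mention since the lemma does not list these as hypotheses.
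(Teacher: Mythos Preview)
Your proposal is correct and follows essentially the same approach as the paper, which gives only a terse sketch: it declares the topological gluing ``clear'', observes that the $\varphi_{ij}$ being morphisms of locally $v$-ringed spaces makes the structure sheaves and valuations glue, and notes that affinoid covers of the $U_i$ yield one for $X$. Your write-up simply unpacks these steps in greater detail; the only mild caveat is that extracting $\varphi_{ii}=\id_{U_i}$ from the cocycle condition tacitly uses $U_{ii}=U_i$, which is the standard convention but not listed explicitly among the hypotheses.
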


\begin{proof}
 That the underlying topological space of $X$ can be glued from the~$U_i$ is clear. 
 As all morphisms are morphisms of locally $v$-ringed spaces, they respect the structure sheaf and the valuations.
 We thus obtain a structure sheaf on~$X$ and also a set of valuations $\{v_x\}_{x \in X}$.
 Since each~$U_i$ is locally isomorphic to an affinoid adic space, the same holds for~$X$.
\end{proof}

\begin{example}[The affine line] \label{example_affine_line}
 Let $k$ be a nonarchimedean field with pseudouniformizer $\varpi \in k$.
 For $n \in \NN$ the \textit{closed (adic) disc of radius}~$|\varpi^{-n}|$ is defined as
 \[
  \DD_k(0,|\varpi^{-n}|) = \Spa(k\langle \varpi^n T\rangle,k^\circ\langle \varpi^n T \rangle),
 \]
 where $\varpi^n T$ is to be treated as a variable.
 So actually the discs $\DD_k(0,|\varpi^{-n}|)$ are all isomorphic but we write them like this because of the way they are contained in each other.
 The notation is justified by looking at classical points:
 For $\alpha \in k$ the valuation
 \[
  x_\alpha: k[T] \overset{T \mapsto \alpha}{\longrightarrow} k \overset{|\cdot|}{\longrightarrow} \R_{\ge 0}
 \]
 extends to a continuous valuation on $k\langle \varpi^n T \rangle$ precisely if
 \[
  |\varpi^n T(x_\alpha)| \le 1  \iff  |\alpha| \le \varpi^{-n}.
 \]
 For $m \ge n$ we have natural homomorphisms
 \begin{equation} \label{transition_maps_discs}
  (k\langle \varpi^m T\rangle,k^\circ\langle \varpi^m T \rangle) \longrightarrow (k\langle \varpi^n T\rangle,k^\circ\langle \varpi^n T \rangle)
 \end{equation}
 mapping $\varpi^m T$ to $\varpi^{m-n}(\varpi^n T)$.
 The corresponding morphism of discs
 \[
  \DD_k(0,|\varpi^{-n}|) \to \DD_k(0,|\varpi^{-m}|)
 \]
 is precisely the inclusion of the disc of radius~$|\varpi^{-n}|$ into the disc of radius~$|\varpi^{-m}|$.
 In fact, $\DD_k(0,|\varpi^{-n}|)$ can be identified with the rational subset
 \[
  \{x \in \DD_k(0,|\varpi^{-m}|) \mid |T(x)| \le \varpi^{-n}\}.
 \]
 So we have an ascending sequence of discs whose radii become arbitrarily large.
 Since the inclusions are open immersions, we can glue the discs to obtain the \emph{affine line}:
 \[
  \A_k^1 = \bigcup_n \DD_k(0,|\varpi^{-n}|).
 \]
 As an adic space the affine line is not affinoid.
 This can be seen by noting that the discs $\DD_k(0,|\varpi^{-n}|)$ cover~$\A_k^1$ but this covering does not have a finite subcovering.
 However, all affinoid adic spaces are quasi-compact.
 
 Let us compute the global sections of the structure sheaf of $\A_k^1$.
 The sheaf property gives us an exact sequence
 \[
  \cO_{\A_k^1}(\A_k^1) \longrightarrow \prod_{m \in \N} \cO_{\A_k^1}(\DD_k(0,|\varpi^{-m}|)) \longrightarrow \prod_{m,n} \cO_{\A_k^1}(\DD_k(0,|\varpi^{-n}|) \cap \DD_k(0,|\varpi^{-m}|)).
 \]
 For $m \ge n$ the disc $\DD_k(0,|\varpi^{-n}|)$ is contained in $\DD_k(0,|\varpi^{-m}|)$.
 This simplifies the right hand term.
 Throwing out redundant factors we obtain an exact sequence
 \[
  \cO_{\A_k^1}(\A_k^1) \longrightarrow \prod_{m \in \N} \cO_{\A_k^1}(\DD_k(0,|\varpi^{-m}|)) \overset{\varphi}{\longrightarrow} \prod_{m > n} \cO_{\A_k^1}(\DD_k(0,|\varpi^{-n}|)),
 \]
 where $\varphi$ maps $(\alpha_m)_{m \in \N}$ to
 \[
  (\alpha_m|_{\DD_k(0,|\varpi^{-n}|)}-\alpha_n)_{m \ge n}
 \]
 The kernel of~$\varphi$ identifies with the projective limit
 \[
  \lim_{m \in \N} \cO_{A_k^1}(\DD_k(0,|\varpi^{-m}|))
 \]
 with transition maps given by (\ref{transition_maps_discs}).
 The rings $k \langle \varpi^m T \rangle$ are all subrings of the ring of formal power series $k \llbracket T \rrbracket$.
 Hence, the transition maps are injective and the above limit can be viewed just as an intersection taking place in $k \llbracket T \rrbracket$.
 
 Remember that
 \begin{align*}
  k \langle \varpi^m T \rangle	&= \left\{ \sum_{i=0}^\infty a_i \varpi^{im} T^i \in k \llbracket T \rrbracket \mid a_i \to 0\right\}	\\
  								&= \left\{ \sum_{i=0}^\infty b_i T^i \in k \llbracket T \rrbracket \mid b_i \varpi^{-im} \to 0\right\}
 \end{align*}
 are the power series converging when inserting $x \in k$ with $|x| \le |\varpi^{-m}|$.
 The intersection of these for all $m \in \N$ are the power series that converge everywhere.
 They are also called \emph{entire power series} and denoted by $k \{\{T\}\}$.
 Now what do they look like?
 Of course the polynomial ring is contained in $k \{\{T\}\}$.
 To construct an infinite series in $k \{\{T\}\}$, we just need to come up with coefficients that decrease very rapidly, faster than any polynomial.
 An example would be
 \[
  \sum_{i=0}^\infty \varpi^{i^2} T^i.
 \]
 Indeed, for any $m \in \N$
 \[
  \varpi^{i^2} \varpi^{-im} = \varpi^{i^2-im}
 \]
 converges to zero as $i$ goes to infinity since $\varpi$ is topologically nilpotent and $i^2 -im \to \infty$.
\end{example}

\begin{example}[The open unit disc] \label{open_unit_disc}
 The \emph{open unit disc} $\Dcirc_k(0,1)$ over a nonarchimedean field~$k$ is defined as the union of all closed discs with smaller radius taken inside $\DD_k(0,1)$ (or $\A_k^1$).
 If $\varpi \in k$ is a pseudouniformizer, we can write it as the union
 \[
  \Dcirc_k(0,1) = \bigcup_{m \in \N} \DD_k(0,|\varpi^{1/m}|)
 \]
 A priori $\DD_k(0,|\varpi^{1/m}|)$ is only well defined if $\varpi$ has an $m$-th root but we can make sense of it by defining it as the rational subset
 \[
  \DD_k(0,|\varpi^{1/m}|) := \{ x \in \DD_k(0,1) \mid |T^m(x)| \le |\varpi|\}
 \]
 of $\DD_k(0,1) = \Spa(k \langle T \rangle)$.
 We have seen in Example~\ref{covering_unit_disc} that $\Dcirc_k(0,1)$ is \emph{not} equal to the subset
 \[
  \{ x \in \DD_k(0,1) \mid |T(x)| < 1\}
 \]
 of $\DD_k(0,1)$.
 The above subset cointains one point more than $\Dcirc_k(0,1)$, namely the point $x_{0,1-}$ described in Example~\ref{covering_unit_disc}.
 
 Let us compute the global sections of the structure sheaf of $\Dcirc := \Dcirc_k(0,1)$.
 We start by determining the global sections of $\DD_k(0,|\varpi^{1/m}|)$.
 By definition $\cO_{\Dcirc}(\DD_k(0,|\varpi^{1/m}|))$ is the completion of $k[T]$ (or $k\langle T \rangle$) with respect to the topology defined by the pair of definition
 \[
  (k^\circ[T,T^m/\varpi],(\varpi)).
 \]
 The completion can be taken inside the power series ring $k \llbracket T \rrbracket$.
 To justify this first note that
 \[
  \cO_{\Dcirc}(\DD_k(0,|\varpi^{1/m}|)) = k \langle T, S \rangle /(\varpi S - T^m).
 \]
 We have a natural inclusion
 \[
  k \langle T, S \rangle \longrightarrow k \llbracket T, S \rrbracket
 \]
 In order to deduce an inclusion
 \[
  k \langle T, S \rangle/(\varpi S -T^m) \longrightarrow k \llbracket T, S \rrbracket/(\varpi S -T^m) = k \llbracket T \rrbracket
 \]
 we need to argue that
 \[
 \p:=(\varpi S -T^m) k \llbracket T \rrbracket
 \cap k \langle T,S \rangle
 \]
 equals the ideal of $k \langle T,S \rangle$ generated by $\varpi S - T^m$.\\
 First note that the polynomial $\varpi S - T^m$ is irreducible
 in $k\langle T,S\rangle$, which can easily be
 checked explicitly. Therefore it
 generates a prime ideal since $k\langle T,S\rangle$
 is a unique factorization domain (see \cite{BGR}, \S~5.2.6, Theorem~1), and it is clearly
 contained in $\p$. The equality follows if we
 show that $\p$ is not a maximal ideal, because
 $k\langle T,S\rangle$ has dimension $2$. But $k\langle T,S\rangle/\p$
 is not a field since it
 embeds into $k \llbracket T, S \rrbracket/(\varpi S -T^m) = k \llbracket T \rrbracket$
 and the image contains $T$, which is not a unit.

 Having identified the two ideals we get the desired inclusion
 \[
  \cO_{\Dcirc}(\DD_k(0,|\varpi^{1/m}|)) = k \langle T, S \rangle/(\varpi S -T^m) \longrightarrow k \llbracket T, S \rrbracket/(\varpi S -T^m) = k \llbracket T \rrbracket.
 \]
 It is now not hard anymore to identify the image of this embedding:
 \[
  \cO_{\Dcirc}(\DD_k(0,|\varpi^{1/m}|)) = \left\{\sum_{i=0}^\infty a_i T^i \mid
  a_i^m \varpi^i \to 0 \right\}
  =\left\{\sum_{i=0}^\infty a_i T^i\mid
  \lvert a_i\rvert\cdot\lvert\varpi\rvert^{i/m}\to
  0\right\}.
 \]
 Taking the intersection of them for varying $m \in \N$ yields the global sections of the open unit disc.
 They can be identified as the subring of the formal power series ring $k \llbracket T \rrbracket$ consisting of all power series that converge when inserting an element of $\bar{k}$ (the algebraic closure of~$k$) of norm less than $1$.
\end{example}

\begin{example}[The projective line]
 We want to explain how to glue two closed unit discs to obtain the projective line.
 Let us start with a nonarchimedean field~$k$ with pseudouniformizer~$\varpi$ and two discs
 \[
  \DD_1 := \Spa(k \langle T \rangle, k^\circ \langle T \rangle), \qquad \DD_2 := \Spa(k \langle S \rangle, k^\circ \langle S \rangle).
 \]
 We want to glue these two spaces along $\{|T|=1\}$ and $\{|S| = 1\}$ such that $T$ identifies with $S^{-1}$.
 Following Lemma~\ref{glue_adic_spaces} this is done by specifying a glueing isomorphism
 \[
  \SSS_1 := \{ x \in \DD_1 \mid |T(x)| =  1\} \overset{\sim}{\longrightarrow} \SSS_2 := \{ x \in \DD_2 \mid |S(x)| = 1\}.
 \]
 This can be done on the level of coordinate rings by writing down an isomorphism of the corresponding Huber pairs:
 \begin{align*}
  (k \langle S,S^{-1} \rangle, k^\circ\langle S,S^{-1} \rangle)	& \longrightarrow (k \langle T,T^{-1} \rangle,k^\circ\langle T,T^{-1}\rangle)	\\
  S																& \longmapsto T^{-1}.
 \end{align*}
 Since there are only two spaces to be glued, there is no cocycle condition to check.
 We call the resulting space the \emph{projective line} and denote it by $\P_k^1$.
 
 Let us compute the global sections of the structure sheaf.
 They are defined by the exact sequence
 \begin{align*}
  \cO_{\P_k^1}(\P_k^1) \longrightarrow k \langle T \rangle \oplus k \langle S \rangle	& \longrightarrow k \langle T,T^{-1} \rangle,	\\
															\left(\sum a_i T^i,\sum b_i S^i\right)	& \longmapsto \sum a_i T^i - \sum b_i T^{-i}
 \end{align*}
 coming from the sheaf condition.
 If
 \[
  \sum_{i=0}^\infty a_i T^i - \sum_{i=0}^\infty b_i T^{-i} = 0,
 \]
 all coefficients $a_i$ and $b_i$ for $i > 0$ have to vanish and we must have $a_0 = b_0$.
 We conclude that
 \[
  \cO_{\P_k^1}(\P_k^1) = k,
 \]
 just as for the scheme theoretic projective line.
 Of course we still need to justify the name \emph{projective line} for the space we have constructed.
 This will become more transparent in Section~\ref{section_analytification}, where we talk about analytification of algebraic varieties.
 
 Finally we want to explain how to embed the affine line into the projective line.
 For this we need to construct compatible empbeddings
 \[
  \DD_k(0,|\varpi^{-m}|) \longrightarrow \P_k^1.
 \]
 For $m=0$ we can just take the natural isomorphism
 \[
  \DD_k(0,1) \overset{\sim}{\longrightarrow} \DD_1.
 \]
 For $m > 0$ we cover $\DD_k(0,|\varpi^{-m}|)$ by two rational subsets
 \[
  \DD_k(0,|\varpi^{-m}|) = \DD_k(0,1) \cup \{ x \in \DD_k(0,|\varpi^{-m}|) \mid |T(x)| \ge 1\}.
 \]
 We map $\DD_k(0,1)$ isomorphically to $\DD_1$ via the natural map and define
 \[
  \{ x \in \DD_k(0,|\varpi^{-m}|) \mid |T(x)| \ge 1\} \longrightarrow \DD_2
 \]
 by the homomorphism of Huber pairs
 \begin{align*}
  (k \langle S\rangle,k^\circ\langle S \rangle)	& \longrightarrow (k\langle \varpi^m T, T^{-1}\rangle,k^\circ \langle \varpi^m T,T^{-1} \rangle	)\\
  S												& \longmapsto T^{-1}.
 \end{align*}
 We leave it to the reader to check that these maps are compatible and glue to an open immersion
 \[
  \iota: \A_k^1 \hookrightarrow \P_k^1.
 \]
 It is clear that $\DD_1$ lies in the image of~$\iota$ being the image of $\DD_k(0,1) \subseteq \A_k^1$.
 Moreover the image of $\DD_k(0,|\varpi^{-m}|)$ intersected with $\DD_2$ is the subset
 \[
  \{x \in \DD_2 \mid |S(x)| \ge |\varpi^m|\}
 \]
 Hence, the image of $\iota$ intersected with $\DD_2$ consists of all points $x \in \DD_2$ such that $|S(x)| > 0$ (remember that $\varpi$ is topologically nilpotent).
 If $|S(x)| = 0$, then $S$ lies in the support of the valuation~$x$, i.e. $x$ comes from a valuation of
 \[
  k \langle S \rangle /(S) = k
 \]
 Since $x$ needs to take values less than or equal to~$1$ on~$k^\circ$, there is only one such continuous valuation, namely the valuation of~$k$ we started with.
 This single point $x$ that does not lie in the image of~$\iota$ is a classical point of~$\DD_2$.
 It is the point $0 \in \DD_2$ to be precise.
 Since $\DD_1$ is glued to $\DD_2$ by mapping $T \to S^{-1}$, this corresponds to $T$ being equal to $\infty$ (loosely speaking).
 This is the reason why we call this point $\infty$.
\end{example}

\section{Local Huber pairs}

In the theory of schemes taking the intersection of all open neighborhoods corresponds to forming the localization at the corresponding prime ideal at the level of rings.
More precisely, for a point~$x$ of a scheme~$X$, we can describe the localization~$X_x$ of~$X$ at $x$ (which is defined as the intersection of all open neighborhoods of~$x$) in the following way.
Choose any open affine neighborhood $\Spec A$ of~$x$.
Then $x$ corresponds to a prime ideal $\p \subset A$ and $X_x = \Spec A_\p$.
So in particular, local schemes are spectra of local rings.

In this section we want to develop a similar correspondence for adic spaces.
Since the material is not thoroughly treated in the literature, we spell out all the proofs and technical details.
Things are a bit more subtle for adic spaces because there are two rings involved and we need to take into account the topology of the rings.
It is clear what we mean by the localization~$X_x$ of an adic space~$X$ at a point~$x$ just as a topological space: the intersection of all open neighborhoods of~$x$.
We will see that we can endow~$X_x$ with the structure of an affinoid adic space such that it is the limit of all open neighborhoods of~$x$ in an appropriate sense.
The corresponding Huber pair will be a \emph{local Huber pair}, so let us start by specifying what we mean by this.

If $A$ is a ring with a valuation~$v$, we can endow~$A$ with the valuation topology, which is the coarsest topology such that the subsets
\[
 \{a \in A \mid v(a) < \gamma\}
\]
are open for varying $\gamma \in \Gamma_v$.

\begin{definition}
 A Huber pair $(A,A^+)$ is called local\footnote{In \cite{HueAd}, Definition~6.1, appears a different definition of local Huber pair.
 The boundedness of $\m^+$ in \cite{HueAd} leads to~$A^+$ being a ring of definition, which might be too restrictive.
 Moreover, the topology in \cite{HueAd} might be coarser than the valuation topology leading to~$|\cdot|$ not being continuous.
 But then $\Spa(A,A^+)$ is empty, a case we want to exclude.}
 if the following conditions are satisfied:
 \begin{itemize}
  \item	$A$ and~$A^+$ are local,
  \item	$A^+$ is the preimage of a valuation ring~$k_A^+$ of the residue field~$k_A$ of~$A$,
  \item the valuation~$v_A$ of~$A$ defined by~$k_A^+$ is continuous.
 \end{itemize}
 A homomorphism $(A,A^+) \to (B,B^+)$ of local Huber pairs is local if $A \to B$ and $A^+ \to B^+$ are local ring homomorphisms.
\end{definition}

Actually the condition on~$A^+$ to be local follows from the second condition, that~$A^+$ be the preimage of~$k_A^+$.
We just included it to be able to speak of local homomorphisms $A^+ \to B^+$.

Let us take a look at the topological conditions in the definition.
The requirement that $v_A$ be continuous ensures that $v_A \in \Spa(A,A^+)$, otherwise the adic spectrum of $(A,A^+)$ would in fact be empty.
We can rephrase this condition by saying that the valuation topology defined by~$v_A$ needs to be coarser than the given topology on~$A$.

For what follows we fix a local Huber pair $(A,A^+)$.
It is a rather peculiar but very important property of local Huber pairs that the maximal ideal~$\m$ of~$A$ is contained in~$A^+$ and is indeed a prime ideal of~$A^+$.
This just follows from the fact that $A^+$ is the preimage in~$A$ of a subring of $k_A = A/\m$.
We also want to point out that
\[
 k_A^+ = A^+/\m.
\]
The valuation ring~$k_A^+$ defines a valuation~$|\cdot|$ on~$A$ such that
\[
 A^+ = \{a \in A \mid |a| \le 1\}.
\]

\begin{exercise}
 If $(A,A^+)$ is local, then $A$ is the localization of~$A^+$ at~$\m$.
\end{exercise}

The prime spectrum of~$A^+$ is of a very special form.
Since $A$ is a localization of~$A^+$, we already know that
\[
 \Spec A \hookrightarrow \Spec A^+.
\]
In order to understand the prime ideals of~$A^+$ that do not come from~$A$, the following lemma is crucial.

\begin{lemma} \label{ideals_A+}
 Let $(A,A^+)$ be a local Huber pair and $\a \subseteq A^+$ an ideal.
 Then either $\a \subseteq \m$ or $\m \subseteq \a$.
 If $\m \subsetneq \a$ and in addition~$\a$ is finitely generated, then~$\a$ is principal.
\end{lemma}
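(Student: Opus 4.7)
The plan is to use the valuation $|\cdot|$ on $A$ associated with $k_A^+$ as a bookkeeping device. Two facts drive the whole argument: the support of $|\cdot|$ equals $\m$ (so $|a| = 0 \iff a \in \m$), and $A^+$ is characterized as $\{a \in A \mid |a| \le 1\}$. Combined with the fact that $A$ is the localization of $A^+$ at $\m$, these let us divide by any element outside $\m$ and stay inside $A^+$, provided the numerator has sufficiently small value.

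For the dichotomy, suppose $\a \not\subseteq \m$ and pick $x \in \a \setminus \m$. Since $A$ is local with maximal ideal $\m$, the element $x$ is a unit in $A$. For any $y \in \m$ we have $|y| = 0$ while $|x| > 0$, so $|y/x| = 0 \le 1$, hence $y/x \in A^+$. Therefore $y = (y/x)\cdot x \in A^+ \cdot \a = \a$, giving $\m \subseteq \a$.

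For the principal generator, write $\a = (x_1,\ldots,x_n)$ as an ideal of $A^+$. Since $\m \subsetneq \a$, not all generators can lie in $\m$ (otherwise $\a \subseteq \m$). The values $|x_1|,\ldots,|x_n|$ lie in a totally ordered group (with zero) because $k_A^+$ is a valuation ring, so there is an index $j$ achieving the maximum, and necessarily $|x_j| > 0$; thus $x_j$ is a unit in $A$. For each $i$ we have $|x_i/x_j| \le 1$, so $x_i/x_j \in A^+$, which means $x_i \in (x_j)$ as an ideal of $A^+$. Consequently $\a = (x_j)$.

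The only mild conceptual point is becoming comfortable moving between $A$ and $A^+$: the unit $x$ (resp.\ $x_j$) is invertible in $A$ but not in $A^+$, yet any ratio against it whose value is $\le 1$ still lands in $A^+$. Once this is internalized, both parts reduce to one-line value comparisons, so I do not expect a serious obstacle.
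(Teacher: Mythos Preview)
Your proof is correct and follows essentially the same approach as the paper's: both parts hinge on the valuation $|\cdot|$ associated with $k_A^+$, using that $A^+ = \{a \in A \mid |a| \le 1\}$ and that elements outside $\m$ are units in $A$. The only cosmetic difference is that in the first part the paper phrases the key step as $a^{-1}m \in \m \subseteq A^+$ rather than $|y/x| = 0 \le 1$, but these are the same observation.
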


\begin{proof}
 Suppose that $\a \nsubseteq \m$.
 We want to show $\m \subseteq \a$, so let us take $m \in \m$ and try to show that $m \in \a$.
 By assumption there is $a \in \a$ which is not contained in~$\m$.
 Considered as an element of~$A$, $a$ is invertible and inside~$A$ we can form the product $a^{-1}m$.
 This is contained in the maximal ideal~$\m$, which in turn is contained in~$A^+$.
 We obtain
 \[
  m = a \cdot (a^{-1}m) \in \a.
 \]
 Thus we have $\m\subsetneq\a$.
 
 Now assume in addition that $\a$ is finitely generated, by $a_1,\ldots,a_n$, say.
 We consider the valuation $|\cdot|$ on $A$ induced by $k_A^+$.
 Choose $i$ such that $|a_i|$ is maximal. Since $\m\subsetneq \a$, the valuation $|a_i|$ must be greater than zero. In particular,
 $a_i$ is not contained in $\m$ and thus invertible in $A$.
 The elements $a_j/a_i\in A$, $j=1,\ldots,n$ all have valuation $\leq 1$ by construction, i.e. they lie in~$A^+$, so that $a_j\in a_iA^+$ for all $j$ and therefore $\a$ is principal.
\end{proof}

This lemma tells us that the spectrum of $A^+$ consists of the spectrum of~$A$ plus additional prime ideals strictly containing $\m$.
These prime ideals are totally ordered by inclusion and identify with the spectrum of $k_A^+$ (minus the zero ideal).

The topology of a Huber pair $(A,A^+)$ is particularly nice if $(A,A^+)$ is uniform.
Then~$A^+$ is bounded and thus a ring of definition.
Hence the results of \ref{ideals_A+} apply to any ideal of definition $I \subseteq A^+$.

In this setting we use the following notation.
If $I$ is contained in~$\m$, we speak of the \emph{formal case} (formal for formal scheme).
If in addition, $I = (0)$, we are in the \emph{discrete case}.
Otherwise, if $I \nsubseteq \m$ (i.e. $\m\subsetneq I$) we refer to this situation as the \emph{analytic case}.

We want to convince ourselves that in the analytic case the resulting topology of~$A$ coincides with the valuation topology.
Let us consider a uniform analytic local Huber pair $(A,A^+)$.
In particular, the corresponding valuation~$v_A$ is non-trivial or in other words $A^+ \ne A$.

\begin{exercise} \label{I_top_nil}
 Let $I \subseteq A^+$ be an ideal of definition.
 Then~$I$ is principal and generated by a topologically nilpotent element~$\varpi$.
\end{exercise}

Let~$A_{\val}$ be the abstract ring~$A$ endowed with the valuation topology.
Since~$v_A$ is continuous (in the topology of~$A$), the identity on~$A$ induces a continuous ring homomorphism
\[
 \psi: A \longrightarrow A_\val.
\]
The topologically nilpotent element~$\varpi$ from \ref{I_top_nil} maps to a topologically nilpotent element in~$A_\val$.
Put differently, $\varpi$ is also topologically nilpotent with respect to the valuation topology.
But this precisely means that the valuation~$v_A$ is microbial.

Now we can show that~$\psi$ is in fact a homeomorphism.
To achieve this we need to convince ourselves that the ideals $(\varpi^n)$ of $A^+$ are open in the valuation topology of~$A$.
But this follows from the identification
\[
 (\varpi^n) = \{ a \in A \mid v_A(a) \le v_A(\varpi^n)\}
\]
and Exercise~\ref{topology_valuation_ring}.
We summarize our results in the following proposition.

\begin{proposition}
 Let $(A,A^+)$ be a uniform analytic local Huber pair.
 Then~$A$ carries the valuation topology.
\end{proposition}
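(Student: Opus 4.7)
The proof plan essentially consists in verifying the two inclusions of topologies, with most of the machinery already assembled in the paragraphs preceding the statement. I would organize the argument as follows.

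First, one direction is essentially built into the definition of a local Huber pair: continuity of $v_A$ says exactly that the valuation topology on $A$ is coarser than the given topology, so the identity map $\psi : A \to A_{\val}$ is continuous. This is the easy half.

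For the reverse inclusion, I would start from Exercise~\ref{I_top_nil}: since $(A,A^+)$ is uniform, $A^+$ is a ring of definition, and the ideal of definition is principal, generated by a topologically nilpotent element $\varpi$. Hence the ideals $(\varpi^n) \subseteq A^+$ (taken inside $A$, or equivalently inside $A^+$) form a fundamental system of neighborhoods of $0$ for the given topology on $A$. Since we are in the analytic case, $\varpi \notin \mathfrak{m}$ (otherwise $(\varpi) \subseteq \mathfrak{m}$, contradicting $\mathfrak{m} \subsetneq I$), so $\varpi$ is a unit in the local ring $A$. This invertibility is the small but essential point that makes the rest work.

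Next I would establish the identification
\[
 (\varpi^n) \,=\, \{a \in A \mid v_A(a) \le v_A(\varpi^n)\}
\]
flagged in the text. For $\subseteq$, write $a = \varpi^n b$ with $b \in A^+$; then $v_A(b) \le 1$ gives $v_A(a) \le v_A(\varpi^n)$. For $\supseteq$, observe that $v_A(a) \le v_A(\varpi^n)$ together with the invertibility of $\varpi$ in $A$ yields $v_A(a\varpi^{-n}) \le 1$, so $a\varpi^{-n} \in A^+$ and $a \in (\varpi^n)$. By Exercise~\ref{topology_valuation_ring} the sublevel sets $\{a \in A \mid v_A(a) \le \gamma\}$ are open in the valuation topology, so each $(\varpi^n)$ is open in $A_{\val}$. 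Hence the valuation topology is finer than the given topology, and combined with the first step we conclude that $\psi$ is a homeomorphism.

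The main obstacle is really the bookkeeping in step two: making sure that $\varpi$ can be treated as a unit in $A$ (which requires the analytic hypothesis to exclude $\varpi \in \mathfrak{m}$) and that the principal ideals $(\varpi^n) \subseteq A^+$ give the correct fundamental system of neighborhoods in $A$ itself — this is where uniformity enters, since it is what guarantees $A^+$ is a ring of definition. Once these facts are lined up, the identification of $(\varpi^n)$ as a sublevel set is a one-line verification.
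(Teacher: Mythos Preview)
Your proposal is correct and follows exactly the argument the paper gives in the paragraphs immediately preceding the proposition: continuity of $v_A$ for one direction, then Exercise~\ref{I_top_nil} to produce the generator $\varpi$, the identification $(\varpi^n)=\{a\in A\mid v_A(a)\le v_A(\varpi^n)\}$, and Exercise~\ref{topology_valuation_ring} to conclude that these ideals are open in the valuation topology. You have in fact supplied slightly more detail than the paper, explicitly checking both inclusions of the identification and spelling out why $\varpi$ is a unit in $A$.
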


\begin{corollary}
 Let $(A,A^+)$ be a uniform local Huber pair.
 Then $(k_A,k_A^+)$ (with the induced topology) is an affinoid field.
\end{corollary}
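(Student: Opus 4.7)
The plan is to verify the three clauses in the definition of an affinoid field for $(k_A,k_A^+)$ with its induced topology. The algebraic clauses are free: $k_A = A/\m$ is a field because $A$ is local, and $k_A^+$ is a valuation ring of $k_A$ directly from the definition of a local Huber pair. What remains is to identify the induced topology on $k_A$, show that it is either the discrete topology or the valuation topology defined by $k_A^+$, and in the latter case produce a topologically nilpotent element of $k_A^+$.

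Since $(A,A^+)$ is uniform, $A^+$ is a ring of definition, and I may fix an ideal of definition $I\subseteq A^+$. I would split according to whether we are in the formal or the analytic case. In the formal case $I\subseteq\m$, so $\pi(I^n)=0$ for all $n$, where $\pi\colon A\twoheadrightarrow k_A$ is the quotient. Because $\pi$ is a quotient of topological abelian groups, it is open, and the images of a neighborhood basis of $0$ form a neighborhood basis of $0$ in $k_A$; hence $\{0\}$ is open in $k_A$ and the induced topology is discrete, which is allowed for an affinoid field without further conditions.

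In the analytic case, the preceding proposition says that $A$ already carries the valuation topology defined by $v_A$. Since $k_A^+$ is a valuation ring of the field $k_A$, the support of $v_A$ equals $\m$, so $v_A$ factors as $A\twoheadrightarrow k_A\xrightarrow{v}\Gamma\cup\{0\}$, with $v$ the valuation of $k_A$ attached to $k_A^+$. Using that $\pi$ is open, the image of the open set $\{a\in A\mid v_A(a)<\gamma\}$ is $\{x\in k_A\mid v(x)<\gamma\}$, and these are cofinal among open neighborhoods of $0$ in the quotient topology; conversely these subsets are open in the valuation topology of $k_A$ by definition, so the two topologies on $k_A$ coincide. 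For the microbial condition, Exercise~\ref{I_top_nil} yields a topologically nilpotent generator $\varpi$ of $I$; since $\m\subsetneq I$, the element $\varpi$ is a unit of $A$, so $\bar\varpi\ne 0$ in $k_A$; it lies in $k_A^+$ because $\varpi\in A^+$, and it is topologically nilpotent in $k_A$ because $\pi$ is continuous. Thus $k_A^+$ is microbial, completing the analytic case.

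The main obstacle is the identification of the quotient topology on $k_A$ with the valuation topology in the analytic case. It hinges on the openness of the quotient map (a general fact for topological abelian groups modulo a subgroup) together with the fact that the support of $v_A$ is exactly $\m$, so that the neighborhood basis $\{v_A<\gamma\}$ transports cleanly. Every other step is either a direct consequence of the defining properties of a local Huber pair or a bookkeeping verification using the topologically nilpotent generator of $I$.
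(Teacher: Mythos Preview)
Your argument is correct and is precisely the deduction the paper intends: the paper leaves this corollary without proof, treating it as immediate from the preceding proposition (that in the analytic case $A$ already carries the valuation topology of $v_A$) together with the formal/analytic dichotomy and the microbial observation made just before that proposition. Your case split, the identification of the quotient topology on $k_A$ via openness of $\pi$, and the use of the topologically nilpotent generator $\varpi$ to get microbiality all match the surrounding discussion in the paper.
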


%
%
%

\begin{lemma}
 The completion $(\hat{A},\hat{A}^+)$ of a uniform local Huber pair $(A,A^+)$ is local.
 In case $(A,A^+)$ is analytic, the completion $(\hat{A},\hat{A}^+)$ coincides with the completion of $(k_A,k_A^+)$ with respect to the valuation topology.
\end{lemma}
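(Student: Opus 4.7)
The proof naturally splits along the formal/analytic dichotomy identified before the lemma. In the analytic case, the preceding proposition gives that $A$ carries the valuation topology. Since $\supp v_A = \m$, every element of $\m$ has valuation zero and therefore lies in each open neighborhood $\{v_A < \gamma\}$ of $0$; hence the closure of $\{0\}$ in $A$ equals $\m$. Passing to the Hausdorff quotient one obtains $A/\m = k_A$ with the valuation topology, and completing yields $\hat A = \hat k_A$. The closure of $A^+$ in $\hat A$ is correspondingly $\hat k_A^+$, so $(\hat A, \hat A^+)$ agrees with the completion of the affinoid field $(k_A,k_A^+)$ in the valuation topology, proving the second assertion. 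Since the completion of an affinoid field is again an affinoid field — a complete valued field together with its valuation ring — it is automatically a local Huber pair.

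In the formal case $I\subseteq\m$ (of which the discrete case $I=0$ is a trivial instance), the plan is to construct a surjection $\pi\colon\hat A\twoheadrightarrow k_A$ and show that its kernel $\hat\m$ is the unique maximal ideal of $\hat A$. Using $A = A^+_\m$ and $I\subseteq\m$, one has canonical isomorphisms $A/I^nA \cong (A^+/I^n)_{\m/I^n}$. Each $R_n := (A^+/I^n)_{\m/I^n}$ is a localization of the local ring $A^+/I^n$ at a prime inside its maximal ideal, hence is local with residue field $k_A$, and the surjections $R_{n+1}\to R_n$ are local. Assembling these into $\hat A\to\varprojlim R_n\to k_A$ gives $\pi$. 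For any $\hat a \in \hat A$ whose image under $\pi$ is nonzero, the components $a_n\in R_n$ are all units by locality of the transitions, and their inverses form a compatible system producing an inverse of $\hat a$ in $\hat A$. Thus $\hat A$ is local with maximal ideal $\hat\m$ and residue field $k_A$.

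The next step is to identify $\hat A^+$, the closure of $A^+$ in $\hat A$, with the preimage $\pi^{-1}(k_A^+)$. One inclusion is immediate since $A^+$ already maps into $k_A^+$. Conversely, given $\hat a = \lim b_n$ with $b_n\in A$ and $\pi(\hat a)\in k_A^+$, for $n$ large the image of $b_n$ in $k_A$ agrees with that of $\hat a$, hence lies in $k_A^+ = A^+/\m$; combined with the crucial inclusion $\m\subseteq A^+$ from Lemma~\ref{ideals_A+}, this gives $b_n \in A^+ + \m = A^+$, so $\hat a \in \hat A^+$. With this description, $\hat A^+$ is local (any $\hat a \in \hat A^+$ mapping to a unit of $k_A^+$ lies outside $\hat\m$, hence is a unit in $\hat A$, and its inverse maps to the inverse in $k_A^+$, hence lies in $\hat A^+$), and it is manifestly the preimage of the valuation ring $k_A^+$ of the residue field of $\hat A$. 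Continuity of the valuation induced by $k_A^+$ on $\hat A$ then follows from Lemma~\ref{spectrum_completion}: the continuous valuation $v_A$ on $A$ extends uniquely to a continuous valuation on $\hat A$, and both this extension and the valuation defined by $k_A^+$ agree with $v_A$ on the dense subset $A$, so they coincide.

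The main obstacle is the formal case, specifically verifying that $\hat A$ is local notwithstanding the localization inherent in $\hat A = \hat A_0\otimes_{A_0}A$, and the careful use of $\m\subseteq A^+$ to replace an approximating sequence in $A$ by one in $A^+$ when identifying $\hat A^+$ as the preimage of $k_A^+$.
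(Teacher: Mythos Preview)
Your proof follows the same formal/analytic split as the paper and the analytic case is handled essentially identically (the paper phrases it via the pair of definition $(A^+,(\varpi))$ and the containments $\m\subseteq(\varpi^n)$, you via the Hausdorff quotient, but the content is the same).

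In the formal case there is a small but genuine gap. Your argument treats $\hat A$ as $\varprojlim R_n=\varprojlim A/I^nA$: you speak of ``the components $a_n\in R_n$'' of an element $\hat a\in\hat A$ and lift the compatible system of inverses back to $\hat A$. But by definition the topology on $A$ has the subgroups $I^n\subseteq A^+$ (not the ideals $I^nA$) as a basis of neighborhoods of~$0$, so a priori $\hat A=\varprojlim A/I^n$ only as a topological group, and the map $\hat A\to\varprojlim R_n$ need not be an isomorphism. What is missing is precisely the observation the paper makes first: since $I\subseteq\m$, one has $IA\subseteq\m\subseteq A^+$, and then $I^{n+1}A=I\cdot(I^nA)\subseteq I\cdot A^+=I$ inductively gives $I^{n+1}A\subseteq I^n$, so the two filtrations are cofinal and $(A,IA)$ is a pair of definition. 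Once this is said, $\hat A$ \emph{is} the $IA$-adic completion $\varprojlim A/I^nA$, and both your explicit inverse-limit argument and the paper's appeal to the standard fact that the adic completion of a local ring along an ideal inside $\m$ is local go through. Your identification of $\hat A^+$ by approximating sequences and the paper's closure argument are then just two phrasings of the same thing.
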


\begin{proof}
 If $(A,A^+)$ is formal, we first convince ourselves that also $(A,IA)$ is a pair of definition.
 The crucial point is that $I \subseteq \m$, whence
 \[
  IA \subseteq \m \subseteq A^+.
 \]
 This implies
 \[
  I^2 \subseteq I^2A \subseteq I,
 \]
 or in other words, the topology defined by $I^n$ coincides with the topology defined by $(IA)^n$.

 Now the assertion of the lemma (in the formal case) essentially follows from the fact that the $IA$-adic completion~$\hat{A}$ of the local ring~$A$ is local with the same residue field.
 The closure of the maximal ideal~$\m$ in~$\hat{A}$ is the maximal ideal~$\m_{\hat{A}}$ of $\hat{A}$.
 Since $\m_A \subseteq A^+$, this implies that $\hat{A}^+$ contains $\m_{\hat{A}}$.
 Moreover,
 \[
  \hat{A}^+/\m_{\hat{A}} \supseteq A^+/\m_A = k_A^+,
 \]
 hence $\hat{A}^+$ contains the preimage of~$k_A^+$ in $\hat{A}$.
 But this preimage is closed and contains~$A^+$, from which we conclude that it coincides with~$\hat{A}^+$.
 
 Let us now consider the analytic case.
 By Exercise~\ref{I_top_nil} we can take a pair of definition of the form $(A^+,(\varpi))$ for a topologically nilpotent unit~$\varpi$ of~$A$.
 Taking into account that $\varpi$ is not contained in~$\m$, Lemma~\ref{ideals_A+} tells us that all powers $(\varpi^n)$ for $n \in \N$ contain~$\m$.
 Therefore, the completion of $(A,A^+)$ factors through
 \[
  (A/\m,A^+/\m) = (k_A,k_A^+).
 \]
 The completion of $(k_A,k_A^+)$ we need is the one with respect to the topology defined by the pair of definition $(k_A^+,(\bar{\varpi}))$, where $\bar{\varpi}$ is the image of~$\varpi$ in~$k_A^+$.
 But we have seen in Exercise~\ref{topology_affinoid_field} that this topology coincides with the valuation topology.
 For the valuation topology we know that the completion~$\hat{k}_A$ of $k_A$ is again a valued field with valuation ring $\hat{k}_A^+$, which implies all claims.
\end{proof}

The denomination \emph{local} is justified for a local Huber pair by the following result.

\begin{proposition}
 A complete Huber pair $(A,A^+)$ with ring of definition~$A^+$ is local if and only if $\Spa(A,A^+)$ is local as a topological space (it has a unique closed point and all other points specialize to it).
 Furthermore, an adic space is local (as a topological space) if and only if it is the spectrum of a local Huber pair.
\end{proposition}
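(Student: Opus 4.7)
The plan is to establish the Huber pair equivalence first and then deduce the adic space version by a reduction to the affinoid case.

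For the forward direction of the first statement, I take a local Huber pair $(A,A^+)$ and show that the distinguished valuation $v_A$, viewed as a point of $X=\Spa(A,A^+)$, is the unique closed point. The decisive observation is that a rational open $R\left(\frac{f_1,\ldots,f_n}{g}\right)$ contains $v_A$ if and only if $g\notin\m$ (so $g$ is a unit in $A$) and $f_i/g\in A^+$ for all $i$. But then for every $x\in X$ we have $|g(x)|\neq 0$ and $|f_i(x)|\le|g(x)|$ automatically, so the open equals all of $X$; in particular $v_A$ lies in the closure of every point. To show $\{v_A\}$ is itself closed, I separate any $y\neq v_A$ from $v_A$ by an explicit open: if $\supp(y)\subsetneq\m$, I pick $g\in\m\setminus\supp(y)$ and use $\{x\in X:|g(x)|\neq 0\}$; if $\supp(y)=\m$, I lift an element of $R_y\setminus k_A^+$ to $a\in A\setminus A^+$ and use the rational subset $R(a/1)=\{x\in X:|a(x)|\le 1\}$, which contains $y$ but not $v_A$.

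For the backward direction, assume $X=\Spa(A,A^+)$ is topologically local with unique closed point $x_0$, and set $\p_0=\supp(x_0)$. The principle I use repeatedly is that any open of $X$ containing $x_0$ must equal $X$, since the condition $x_0\in\overline{\{x\}}$ for every $x$ forces every open neighborhood of $x_0$ to contain every $x$. Applied to the open $\{x\in X:|g(x)|\neq 0\}$ this immediately yields $\supp(x)\subseteq\p_0$ for every $x\in X$. To conclude that $A$ is local with maximal ideal $\p_0$, I exhibit for every maximal ideal $\m'$ of $A$ a point of $\Spa(A,A^+)$ with support $\m'$: in the analytic case $A/\m'$ is a nonarchimedean field (a topologically nilpotent unit of $A$ survives in it), and its canonical rank-one valuation is continuous and bounded by $1$ on $A^+$ because $A^+\subseteq A^\circ$; in the formal and discrete cases $\m'$ is already open (it contains an ideal of definition), so the trivial valuation on $A/\m'$ does the job. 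Then $\m'\subseteq\p_0$ forces $\m'=\p_0$. Applying the same ``open-neighborhood-equals-$X$'' principle to $R(a/1)$ shows $|a(x_0)|\le 1 \Leftrightarrow |a(x)|\le 1$ for all $x\in X$, so combined with Theorem~3.3 (which identifies $A^+$ with $\{a\in A:|a(x)|\le 1\ \forall x\in X\}$), one sees that $A^+$ is exactly the preimage of the valuation ring of $v_{x_0}$ in $k_A=A/\p_0$, and continuity of $v_A$ is free.

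For the statement about adic spaces, the new content is the ``only if'' direction: if $X$ is a topologically local adic space with closed point $x_0$, any affinoid open neighborhood of $x_0$ equals $X$ by the same principle, so $X$ is affinoid, and by what we just proved its Huber pair is local. The main obstacle I anticipate is the construction of valuations at prescribed maximal ideals of $A$, which must be handled separately in the analytic and non-analytic cases and genuinely uses the hypothesis that $A^+$ is a ring of definition (to guarantee $A^+\subseteq A^\circ$). Everything else is careful but essentially formal unpacking of the specialization order in $\Spa$.
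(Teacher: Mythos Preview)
Your overall strategy matches the paper's, and several of your sub-arguments are actually cleaner: proving closedness of $\{v_A\}$ by an explicit case split is a fine alternative to the paper's appeal to spectrality, and your derivation of $A^+=\{a:|a(x_0)|\le 1\}$ via the identity $A^+=\{a:|a(x)|\le 1\ \forall x\}$ is slicker than the paper's direct valuation-ring criterion.

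There is, however, a genuine gap in your proof that $A$ is local. Your case split ``analytic / formal / discrete'' is borrowed from the classification of \emph{local} Huber pairs, but at this point of the argument you do not yet know $(A,A^+)$ is local, so the trichotomy is not available. If you instead mean the split ``$A$ Tate / $A$ not Tate'', the claim that in the non-Tate case every maximal ideal $\m'$ is open is false in general. For instance, take $A^+=\Z_p\langle X\rangle$ with ideal of definition $(p)$ and $A=A^+[1/X]$; this is a complete Huber pair with $A^+$ a ring of definition, $A$ is not Tate, yet $1+p/X$ is a non-unit (its formal inverse $\sum_k(-p/X)^k$ has unbounded negative $X$-degree), so it lies in a maximal ideal $\m'$ which cannot contain $p$ and is therefore not open. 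Even in the Tate case your sketch is incomplete: you need to argue that $\m'$ is closed and that the quotient field $A/\m'$, which need not be complete, nonetheless carries a rank-one valuation whose pullback to $A$ is continuous and bounded by $1$ on~$A^+$. The paper sidesteps all of this by invoking \cite{Hu94}, Lemma~1.4, which guarantees for any complete Huber pair a point of $\Spa(A,A^+)$ with support any prescribed maximal ideal; you should either cite that result or supply a self-contained argument that does not presuppose the trichotomy you are trying to establish.
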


\begin{proof}
 Let $(A,A^+)$ be a local Huber pair.
 We claim that $\Spa(A,A^+)$ is local with closed point~$x$ given by the valuation ring $k_A^+ \subseteq k_A$.
 Since $\Spa(A,A^+)$ is spectral, it suffices to show that any open neighborhood of~$x$ equals $\Spa(A,A^+)$.
 In fact it is enough to check this for a rational open neighborhood
 \[
  U :=R\left(\frac{f_1,\ldots,f_n}{g}\right) = \{y \in \Spa(A,A^+) \mid |f_i(y)| \le |g(y)| \ne 0~\forall i=1,\ldots,n\}
 \]
 of~$x$ defined by elements $f_1,\ldots,f_n,g \in A$ generating the unit ideal.
 Since $x$ is supposed to be contained in~$U$, it satisfies
 \[
  |f_i(x)| \le |g(x)| \ne 0
 \]
 for $i=1,\ldots,n$.
 The condition $|g(x)| \ne 0$ translates to $g \notin \m_A$, so $g$ is invertible in~$A$.
 This implies that $v(g) \ne 0$ for \emph{any} valuation of~$A$.
 The inequalities can now be rearranged to
 \[
  \left|\frac{f_i}{g}(x)\right| \le 1,
 \]
 or in other words $f_i/g \in A^+$.
 But any point $y \in \Spa(A,A^+)$ satisfies $|a(y)| \le 1$ for all $a \in A^+$, in particular for $a=f_i/g$.
 We conclude that all points of $\Spa(A,A^+)$ are contained in~$U$.
 
 Conversely suppose that $(A,A^+)$ is a complete Huber pair such that $\Spa(A,A^+)$ is local with closed point~$x$.
 We first show that $A$ is local with maximal ideal $\m:= \supp x$.
 Suppose that~$\m' \subseteq A$ is a maximal ideal different from~$\m$.
 Since~$\m'$ is a maximal ideal of a complete Huber ring it is possible to construct a continuous valuation~$x'$ with support~$\m'$ lying in $\Spa(A,A^+)$ (see \cite{Hu94}, Lemma~1.4).
 But $x'$ does not specialize to~$x$.
 This can be seen by choosing an element~$g$ of~$\m'$ that is not contained in~$\m$.
 Then the open subset
 \[
  \{x \in \Spa(A,A^+) \mid |g(x)| \ne 0\}
 \]
 of $\Spa(A,A^+)$ contains~$x$ but not $x'$.
 To conclude, $x'$ does not specialize to~$x$, which gives a contradiction to our assumption that~$A$ is not local.
 Thus we now know that~$A$ is local with maximal ideal~$\m$.
 We denote its residue field by $k_A$.
 
 In order to prove that~$A^+$ is the preimage in~$A$ of a valuation ring of~$k_A$, it suffices to check the following:
 Every~$a \in A$ with $a \notin A^+$ is a unit in~$A$ and $1/a \in A^+$ (the reader can convince themselves that this criterion holds or consult \cite{KneZha}, Theorem~2.5).
 For such an~$a$ we consider the rational subset
 \[
  R\left(\frac{a}{1}\right) = \{ y \in \Spa(A,A^+) \mid |a(y)| \le 1\}.
 \]
 Its complement is nonempty as
 \[
  a \notin A^+ = \{b \in A \mid |b(y)| \le 1~\forall y \in \Spa(A,A^+)\}.
 \]
Since $\Spa(A,A^+)$ is local with closed point~$x$, this implies that~$x$ is contained in the complement, or in other words
\[
 |a(x)| > 1.
\]
First this implies $|a(x)| \ne 0$, whence $a \notin \m$, so $a \in A^\times$.
Then we get
\[
 \left|\frac{1}{a}(x)\right| < 1,
\]
so
\[
 x \in R \left(\frac{1}{a}\right) = \{ y \in \Spa(A,A^+) \mid 1\le |a(y)|\}
 =\left\{y\in\Spa(A,A^+)\mid \left|\frac{1}{a}(y)\right|
 \le 1\right\}.
\]
We conclude that this rational subset is all of $\Spa(A,A^+)$ and thus $1/a \in A^+$.

The condition on the topology of~$A$ follows automatically as~$x$ is a continuous valuation, which means that the valuation topology on~$A$ defined by~$x$ is coarser than the given topology.

Finally any local adic space needs to be affinoid as adic spaces are locally affinoid.
Therefore, the second assertion of the proposition follows from the first one. 
\end{proof}

We now have a basic understanding of local Huber pairs.
Our goal is to interpret $X_x$ for a point $x$ of an adic space~$X$ as the adic spectrum of a local Huber pair $(A,A^+)$.
This local Huber pair $(A,A^+)$ should morally be given by $(\cO_{X,x},\cO_{X,x}^+)$.
Unfortunately, $(\cO_{X,x},\cO_{X,x}^+)$ is not uniform, in general.
While it is true that $(\cO_{X,x},\cO_{X,x}^+)$ is local, we run into trouble when it comes to the topology.
The colimit topology on $\cO_{X,x}$ might be finer than the valuation topology of the valuation corresponding to~$x$.
But we don't want to use the colimit topology because it is quite messy and does not reflect the geometry of~$X_x$.

We can solve this issue by resorting to the concept of limit of adic spaces defined in \cite{Hu96}, Definition~2.4.2.
Following Huber an adic space~$X$ together with compatible maps
\[
 \pi_i : X \longrightarrow X_i
\]
is called a limit of a projective system of adic spaces $(X_i)_{i \in I}$ if the following two assertions hold.
\begin{enumerate}[(i)]
 \item	The morphisms~$\pi$ induce an identification $X = \lim_i X_i$ as topological spaces.
 \item	For every~$x \in X$ there is an open neighborhood $U \subseteq X$ such that the image of
 		\[
 		 \colim_{(i,V)} \cO_{X_i}(V) \longrightarrow \cO_X(U)
 		\]
		is dense, where $(i,V)$ runs over all pairs with $i \in I$ and $V$ an open subset of~$X_i$ containing the image of~$U$.
\end{enumerate}
We then write
\[
 X \sim \lim_{i \in I} X_i.
\]
Using ``$\sim$'' instead of ``$=$'' reflects the fact that this is not a projective limit in the categorical sense.
For that we would need the union in part (ii) to \emph{equal} $\cO_X(U)$.
But it turns out that the categorical limit is too restrictive in many situations and the above defined concept of limit serves our purposes.

Our strategy now is to write down a nice local Huber pair $(A,A^+)$, define
\[
 X_x := \Spa(A,A^+),
\]
and show that
\[
 X_x \sim \lim_{U} U,
\]
where the limit runs over all open neighborhoods~$U$ of~$x$.

We start with an affinoid open neighborhood $\Spa(B,B^+)$ of~$x$.
Let $\p \subseteq B$ denote the support of~$x$.
We define $A:=B_\p$ and consider the natural projection
\[
 \pi: A = B_\p \twoheadrightarrow k(\p) = k(x)
\]
The point~$x$ comes with a valuation of~$k(x)$ whose valuation ring we denote by $k(x)^+$.
Then we set
\[
 A^+:= \{a \in A \mid \pi(a) \in k(x)^+\}.
\]
We still need to specify a topology on~$A$.
Let $(B_0,J)$ be a pair of definition for~$B$ and assume $J \subseteq B^+$ (otherwise we replace~$J$ by a power).
We then take
\[
 I := JA^+ \subseteq A^+
\]
as an ideal of definition for~$A$ (with ring of definition~$A^+$).
This is the point where we possibly make the topology coarser than the colimit topololgy, in which $A^+$ might possibly not be bounded.

With this topology $(A,A^+)$ is a local Huber pair and
\[
 (B,B^+) \longrightarrow (A,A^+)
\]
is continuous.

\begin{exercise}
 If $(B,B^+) \to (B',B'^+)$ is a rational localization with $x \in \Spa(B',B'^+)$, we obtain a local homomorphism $(A,A^+) \to (A',A'^+)$ of the corresponding localizations at~$x$.
 This homomorphism induces an isomorphism on completions.
 \[
  (\hat{A},\hat{A}^+) \to (\hat{A}',\hat{A}'^+).
 \]
\end{exercise}

The above exercise shows that the completion of $(A,A^+)$ (and thus $\Spa(A,A^+)$) is independent of the choice of the affinoid neighborhood $\Spa(B,B^+)$ of~$x$.

\begin{proposition}
 Let $x$ be a point of an adic space~$X$ and $(A,A^+)$ the local Huber pair constructed above.
 Setting $X_x := \Spa(A,A^+)$ we have
 \[
  X_x \sim \lim_U U,
 \]
 where the limit runs over all open neighborhoods~$U$ of~$x$.
\end{proposition}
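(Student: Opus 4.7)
The plan is to verify the two defining conditions of Huber's $\sim$-limit, after making a preliminary reduction. Because $\Spa(A,A^+)$ is constructed starting from a single affinoid neighborhood $\Spa(B,B^+)$ of $x$, and because rational subsets form a basis of the topology on such an affinoid, the rational neighborhoods of $x$ in $\Spa(B,B^+)$ are cofinal among all open neighborhoods $U$ of $x$ in $X$. So we may run the limit over rational $U = R(\tfrac{f_1,\ldots,f_n}{g}) \subseteq \Spa(B,B^+)$ containing $x$, with the transition maps induced on coordinate rings.

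For condition (i), I will establish that the morphism of Huber pairs $(B,B^+) \to (A,A^+)$ induces a homeomorphism of $\Spa(A,A^+)$ onto the intersection $\bigcap_U U \subseteq \Spa(B,B^+)$. For one inclusion: if $U = R(\tfrac{f_1,\ldots,f_n}{g})$ contains $x$, then $|g(x)|\ne 0$ forces $g \notin \p$, so $g$ is a unit in $A = B_\p$; and $|f_i(x)| \le |g(x)|$ translates to $\pi(f_i/g) \in k(x)^+$, whence $f_i/g \in A^+$, so every $y \in \Spa(A,A^+)$ lies in $U$. For the reverse inclusion, take $y \in \Spa(B,B^+)$ contained in every rational neighborhood of $x$. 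Given $g \in B \setminus \p$, Exercise~\ref{generalized_rational} produces a rational neighborhood of $x$ cut out by $g$ (together with generators of an appropriate $I^r$) on which the condition $|g|\ne 0$ holds; thus $g(y)\ne 0$, which forces $\supp y \subseteq \p$ and so $y$ extends to a valuation of $A$. Applied to any denominator $g$ appearing in $a = b/g \in A^+$, the condition $|b(x)|\le|g(x)|$ similarly cuts out a rational neighborhood of $x$ containing $y$, giving $|a(y)|\le 1$. The topological identification then follows because rational subsets of $\Spa(A,A^+)$ pull back from rational subsets of the various $U$, and both families form bases; continuity of the inverse map comes from the explicit description.

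For condition (ii), I fix a rational open $W \subseteq X_x$ cut out by $f_1,\ldots,f_n,g \in A$. Each $f_i$ and $g$ lies in $A = B_\p$ and hence in some rational localization $B_U$ for a sufficiently small rational neighborhood $U$ of $x$ (denominators become units on small enough $U$). On such a $U$, the same formulae define a rational subset $V_U \subseteq U$ whose pullback to $X_x$ is $W$, and shrinking $U$ yields a cofinal system. Then $\cO_{X_x}(W) = (A_W,A_W^+)$ is obtained by completing the filtered colimit $\colim_U B_{V_U}$ (along with its ring of integral elements), so the image of $\colim_U \cO_X(V_U) \to \cO_{X_x}(W)$ is dense, as required.

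I expect the main obstacle to be the converse inclusion in condition~(i): the passage from ``$y$ is in every rational neighborhood of $x$'' to the algebraic conditions $\supp y \subseteq \p$ and $y(A^+) \le 1$. The subtlety is that, to force a given inequality $|b(y)| \le |g(y)|$ or nonvanishing $|g(y)|\ne 0$, one must exhibit a bona fide rational subset containing $x$ witnessing it; this requires enlarging the data $\{b,g\}$ by generators of a power of an ideal of definition to ensure an open ideal is generated, which is precisely the content of Exercise~\ref{generalized_rational}. Once this technical point is handled, the rest of the verification reduces to repeated application of the rational-subset formalism and the compatibility of rational opens under the localization $(B,B^+) \to (A,A^+)$.
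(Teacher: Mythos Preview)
Your outline is close to the paper's argument, but there is one genuine omission in the verification of condition~(i). When you take $y \in \Spa(B,B^+)$ lying in every rational neighborhood of~$x$ and show that $\supp y \subseteq \p$ and $|a(y)| \le 1$ for all $a \in A^+$, you have produced a valuation of~$A$ bounded by~$1$ on~$A^+$, but you have \emph{not} checked that this valuation is continuous for the topology on~$A$. Recall that the topology on~$A = B_\p$ was specified by the pair of definition $(A^+, JA^+)$ where $(B_0,J)$ is a pair of definition for~$B$; this is in general coarser than the colimit topology, and continuity of~$y$ as a valuation of~$B$ does not formally imply continuity on~$A$. The paper singles this out as ``the critical point'' and verifies it via the criterion of \cite{bergdall2024huber}, Proposition~1.2.7.1: one needs $|b(y)|$ cofinal in~$\Gamma_y$ for $b \in J$ (which holds since~$y$ is continuous on~$B$) and $|ba(y)| < 1$ for $b \in J$, $a \in A^+$ (which follows from $|a(y)| \le 1$ and $|b(y)| < 1$). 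The argument is short, but without it you have not shown that~$y$ defines a point of $\Spa(A,A^+)$.

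A secondary point: your treatment of condition~(ii) asserts that $\cO_{X_x}(W)$ is the completion of $\colim_U B_{V_U}$, which is plausible but not immediate (one would have to compare topologies on the colimit with the localization topology on~$A_W$). The paper avoids this by arguing more concretely: write an element $c \in C = A_f$ as $b/(f^n s)$ with $b \in B$, $s \in B \setminus \p$, then use quasicompactness of $\iota(V)$ to find a single rational open $R\bigl(\tfrac{E_m}{f^n s}\bigr) \subseteq \Spa(B,B^+)$ containing $\iota(V)$ on which $f^n s$ is inverted, so that~$c$ visibly lies in the image of the colimit. Your approach could be made to work, but as written it elides the step that actually delivers density.
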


\begin{proof}
 Let us first verify that $X_x$ is a limit at the level of topological spaces.
 We consider the natural morphisms
 \[
  \Spa(A,A^+) \overset{\iota}{\longrightarrow} \Spa(B,B^+) \longrightarrow X,
 \]
 where $\Spa(B,B^+)$ is the affinoid open used in the construction of $(A,A^+)$.
 It is clear that~$\iota$ is injective as a valuation of $A = B_\p$ is uniquely determined by its restriction to~$B$.
 Moreover, its image is contained in the set of all generalizations of~$x$ as $\Spa(A,A^+)$ is local and its closed point maps to~$x$.
 
 Now let us pick a point $y$ specializing to~$x$ and show that it lies in the image of $\Spa(A,A^+)$.
 We start by checking that $\supp y$ is contained in $\p = \supp x$.
 For $b \in \supp y$ we consider the rational subset
 \[
  R\left(\frac{b}{b}\right) = \{ z \in \Spa(B,B^+) \mid |b(z)| \ne 0\}.
 \]
 If $x$ were contained in it, then also~$y$ would be contained but $|b(y)| = 0$.
 Therefore, $x$ is not contained, whence $|b(x)| = 0$, i.e. $b \in \p$.
 
 By what we have just seen the valuation~$y$ extends from~$B$ to~$A$.
 Moreover, $y$ takes values less or equal to~$1$ on~$A^+$ as $x$, hence~$y$, is contained in the rational subsets
 \[
  R\left(\frac{a}{1}\right) = \{ z \in \Spa(B,B^+) \mid |a(z)| \le 1\}
 \]
 for all $a \in A^+$.
 
 The critical point now is to check that~$y$ is a continuous valuation with respect to the topology of~$A$.
 Let $(B_0,I)$ be a pair of definition for~$B$.
 Then $(A^+,IA^+)$ is a pair of definition for~$A$.
 By \cite{bergdall2024huber}, Proposition~1.2.7.1, we need to check that $|b(y)|$ for $b \in I$ are cofinal in $\Gamma_y$ and that $|ba(y)| < 1$ for all $b \in I$ and $a \in A^+$.
 The first condition holds because~$y$ is a continuous valuation of~$B$ and the second condition because $|a(y)| \le 1$ for all $a \in A^+$.
 
 Finally we need to convince ourselves that $\iota$ induces a homeomorphism to the set of all generalizations of~$x$ (and not just a bijection).
 We consider the image via~$\iota$ of a rational open
 \[
  R\left(\frac{f_1,\ldots,f_n}{g}\right)
 \]
 of $\Spa(A,A^+)$.
 Multiplying $f_1,\ldots,f_n$ and~$g$ by a unit of~$A$ does not change the rational subset.
 Hence we can clear denominators by multiplying these elements with some $b \in B \setminus \p$ to achieve that they are all contained in the image of~$B$.
 We pick preimages $f'_1,\ldots,f'_n$ and $g'$ in~$B$.
 Then
 \[
  \iota(R\left(\frac{f_1,\ldots,f_n}{g}\right)) = R\left(\frac{f'_1,\ldots,f'_n}{g'}\right) \cap \{y \in \Spa(B,B^+) \mid y \rightsquigarrow x\}.
 \]
 The set
 \[
  R\left(\frac{f'_1,\ldots,f'_n}{g}\right) = \{ y \in \Spa(B,B^+) \mid |f'_i(y)| \le |g'(y)| \ne 0\}
 \]
 might not be rational because we cannot guarantee that $f'_1,\ldots,f'_n$ and~$g$ generate the unit ideal in~$B$.
 Nonetheless it is still open (see Exercise~\ref{generalized_rational}).
 This finishes the proof that $\iota$ is a homeomorphism onto the set of generalizations of~$x$.
 
 It remains to show that for any rational open $V = \Spa(C,C^+) \subseteq \Spa(A,A^+)$ the image of
 \[
  \colim_{\iota(V) \subseteq U} \cO_X(U) \longrightarrow (\hat{C},\hat{C}^+)
 \]
 is dense.
 We may assume that $C$ is a localization $A_f$ for some $f \in A$.
 We claim that in this case~$C$ lies in the image of the colimit.
 Indeed, an element $c$ of $C$ can be written as $a/f^n$ for $a \in A$ and $n \in \N$.
 In turn, $a$ can be written as $b/s$ for $b \in B$ and $s \in B \setminus \p$.
 This combines to
 \[
  c = \frac{b}{f^ns}.
 \]
 For all points $y \in V$ we have $|f^ns(y)| \ne 0$.
 Therefore,
 \[
  \iota(V) \subseteq \bigcup_{m \in \N} R\left(\frac{E_m}{f^ns}\right),
 \]
 where $E_m$ is a finite set of generators of~$I^m$.
 Since $\iota(V)$ is quasicompact, it is contained already in a finite union. 
 If we assume $B_0 \subseteq B^+$ (which we can always achieve by replacing $B_0$ by $B_0 \cap B^+$), the union is an ascending union and thus
 \[
  \iota(V) \subseteq R\left(\frac{E_m}{f^ns}\right)
 \]
 for some~$m$.
 The corresponding map on global sections of the structure sheaf is the completion of
 \[
  B\left[\frac{1}{f^ns}\right] \longrightarrow A_f = C,
 \]
 whose image obviously contains $c = b/(f^ns)$.
\end{proof}

\begin{remark}
 The Huber pair of local rings $(\cO_{X,x},\cO_{X,x}^+)$ is local but not necessarily uniform if it is equipped with the colimit topology.
 However, $X_x$ is uniform by construction.
 Forcing $\cO_{X,x}^+$ to be a ring of definition, $(\cO_{X,x},\cO_{X,x}^+)$ becomes a uniform local Huber pair and then
 \[
  X_x = \Spa(\cO_{X,x},\cO_{X,x}^+).
 \]
 In particular, the completion of $(\cO_{X,x},\cO_{X,x}^+)$ coincides with the completion of the Huber pair $(A,A^+)$ constructed above.
\end{remark}

In the following when we work with $(\cO_{X,x},\cO_{X,x}^+)$ we will always assume that $\cO_{X,x}$ carries the topology with pair of definition $(\cO_{X,x}^+,I\cO_{X,x}^+)$, where~$I$ is an ideal of definition coming from some affinoid open neighborhood of~$x$.
In particular,
\[
 X_x = \Spa(\cO_{X,x},\cO_{X,x}^+)
\]
and the global sections of the structure sheaf of $X_x$ are given by the completion
\[
 (\hat{\cO}_{X,x},\hat{\cO}_{X,x}^+)
\]

\section{Analytic adic spaces}

Analytic adic spaces are those one would say are closest to rigid analytic varieties but without finiteness assumptions.

\begin{definition}
 \begin{itemize}
  \item	Let $(A,A^+)$ be a Huber pair.
		A point $x \in \Spa(A,A^+)$ is \emph{analytic} if the prime ideal $\supp x$ of~$A$ is \emph{not} open.
  \item	A point $x$ of an adic space~$X$ is \emph{analytic} if it has an affinoid neighborhood $\Spa(A,A^+)$ such that $x$ is analytic as a point of $\Spa(A,A^+)$.
  \item	An adic space is \emph{analytic} if all its points are analytic.
 \end{itemize}
\end{definition}

Let us explain the intuition behind this definition.
We take a point $x$ of an adic space~$X$ and consider the (uniformized) local Huber pair $(\cO_{X,x},\cO_{X,x}^+)$.
Then we have the following:

\begin{exercise}
 With the above notation $x$ is analytic if and only if $(\cO_{X,x},\cO_{X,x}^+)$ is analytic or, in other words, the residue field $k(x)$ carries the valuation topology.
 Moreover, the points with support $\supp x$ correspond precisely to the continuous valuations of $k(x)$.
\end{exercise}

We learn from the exercise that only in the analytic case there is a continuity condition involved for valuations of $k(x)$.
Or in other words, for a Huber pair $(A,A^+)$ and an open prime ideal $\p$, \emph{any} valuation of the residue field $k(\p)$ defines a continuous valuation of~$A$.
In order to be an element of $\Spa(A,A^+)$ it just needs to satisfy the condition that it is less or equal to one on~$A^+$.

Let us get back to the study of analytic points and look at some examples.

\begin{example}
 $\Spa(\ZZ_p,\ZZ_p)$ has two points: the $p$-adic valuation on~$\QQ_p$ and the trivial valuation on $\F_p = \ZZ_p/p\ZZ_p$.
 The former point is analytic and the latter point is not.
\end{example}

The analyticity of a point $x \in X$ does not depend on the choice of an affinoid open neighborhood $\Spa(A,A^+)$ of $x$.
This is because for a rational subset $\Spa(B,B^+) \subseteq \Spa(A,A^+)$, the homomorphism $A \to B$ is adic, which is defined as follows.

\begin{definition}
 \begin{itemize}
  \item	A homomorphism of Huber rings $\varphi: R \to S$ is \emph{adic} if $R$ and~$S$ have rings of definition $R_0$ and~$S_0$ with $\varphi(R_0) \subseteq S_0$ such that for an ideal of definition $I_R \subseteq R_0$ the ideal $\varphi(I_R)S_0$ is an ideal of definition for~$S$.
  \item	A morphism of adic spaces is \emph{adic} if it is locally defined by an adic morphism of Huber pairs.
 \end{itemize}
\end{definition}

In other words one could say that for an adic homomorphism $R \to S$, the topology on~$S$ is defined by the topology on~$R$.
In particular, an ideal $J \subset S$ is open if and only if its preimage in~$R$ is open.
This settles the issue brought up above that analyticity is independent of the choice of affinoid open neighborhood.

We want to remark further that the critical point in the above definition is not that we can find rings of definition with $\varphi(R_0) \subseteq S_0$.
This is possible for any homomorphism of Huber rings.
The crucial point is that $\varphi(I_R)S_0$ is an ideal of definition.

\begin{proposition} \label{characterize_analytic}
 Let $x$ be a point of an adic space~$X$.
 The following are equivalent.
 \begin{enumerate}
  \item $x$ is analytic,
  \item $x$ has an affinoid open neighborhood $\Spa(A,A^+)$ such that~$A$ is a Tate ring (i.e. has a topologically nilpotent unit).
 \end{enumerate}
\end{proposition}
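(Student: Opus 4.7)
The direction (2) $\Rightarrow$ (1) is the easy one. Suppose $\Spa(A,A^+)$ is an affinoid open neighborhood of~$x$ and $A$ is Tate with topologically nilpotent unit~$\varpi$. Any open ideal $J \subseteq A$ is in particular an open neighborhood of~$0$, so it contains $\varpi^n$ for some $n$; but $\varpi^n$ is a unit, so $J = A$. Hence the only open ideal of~$A$ is~$A$ itself, and in particular no proper prime ideal (like $\supp x$) is open. Thus $x$ is analytic.

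For (1) $\Rightarrow$ (2), the plan is to start from an affinoid open neighborhood $\Spa(A,A^+)$ of~$x$ in which $\supp x$ is not open, and produce a rational subset containing~$x$ whose associated Huber ring is Tate. Pick a pair of definition $(A_0,I)$ with $A_0 \subseteq A^+$. Since $I$ is open and $\supp x$ is not, there exists $t \in I$ with $t \notin \supp x$, i.e.\ $|t(x)| \neq 0$. This element~$t$ will be the candidate topologically nilpotent unit after shrinking to a suitable rational subset.

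Next I use the continuity of the valuation $v_x = |\cdot(x)|$ to control the relative size of elements of~$I^m$ against~$t$. Since $v_x$ is continuous and $\{\gamma \in \Gamma_x \cup \{0\} \mid \gamma < |t(x)|\}$ is an open neighborhood of~$0$ in the target, its preimage is an open neighborhood of~$0$ in~$A$. The ideals $I^m$ form a fundamental system of open neighborhoods of~$0$, so for $m$ sufficiently large we have $|s(x)| < |t(x)|$ for all $s \in I^m$. Choose such an~$m$ and pick a finite set of generators $E_m$ of $I^m$.

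Now I consider the rational subset $U := R\!\left(\tfrac{E_m}{t}\right)$. The elements of $E_m \cup \{t\}$ generate an ideal containing~$I^m$, which is open, so this is indeed a rational subset; and by the choice of~$m$ the point~$x$ lies in~$U$. On~$A_U$ the element~$t$ becomes invertible (being in the denominator), while $t \in I$ shows that $t$ lies in the ideal of definition $I\cdot A_0[E_m/t]$ of $A_U$, so $t$ is topologically nilpotent there. Hence $A_U$ is a Tate ring and $U = \Spa(A_U,A_U^+)$ is the desired affinoid neighborhood of~$x$.

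The only step that is not purely bookkeeping is the application of continuity that yields $|s(x)| < |t(x)|$ for all $s \in I^m$ with $m$ large; this is where the assumption of analyticity (via $|t(x)| \neq 0$) and the continuity criterion of \cite{bergdall2024huber}, Proposition~1.2.7.1, meet. Everything else reduces to unwinding the definitions of rational subsets, rings of definition, and topologically nilpotent elements.
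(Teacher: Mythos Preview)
Your proof is correct and follows the same overall strategy as the paper: produce a rational neighborhood of~$x$ in which an element of the ideal of definition becomes a topologically nilpotent unit. The difference is only in how you pick that element. The paper takes a finite generating set $a_1,\ldots,a_r$ of~$I$, reorders so that $|a_1(x)|$ is maximal among the $|a_i(x)|$, observes this maximum is nonzero (else $I \subseteq \supp x$), and uses $R\!\left(\tfrac{a_1,\ldots,a_r}{a_1}\right)$ directly. You instead pick an arbitrary $t \in I \setminus \supp x$ and then invoke continuity to find $m$ with $I^m \subseteq \{a : |a(x)| < |t(x)|\}$, passing to $R(E_m/t)$. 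Both arguments are short; the paper's avoids the appeal to continuity by exploiting the finiteness of the generating set, while yours has the mild advantage of making explicit that $(t)$ and $I$ generate the same topology on the new ring of definition (since $I^m \subseteq (t) \subseteq I$ there), which is exactly what makes $t$ topologically nilpotent.
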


\begin{proof}
 Suppose~$x$ has an open affinoid neighborhood $\Spa(A,A^+)$ such that $\supp x \subseteq A$ is not open.
 Let $I = (a_1,\ldots,a_r)$ be an ideal of definition in a ring of definition~$A_0$ of~$A$.
 We can order the generators~$a_i$ in such a way that the value $|a_1(x)|$ is the maximum of all $|a_i(x)|$.
 This maximum cannot be zero as otherwise all $a_i$ would be contained in $\supp x$ and thus $\supp x$ would be open.
 Hence~$x$ is contained in the rational subset
 \[
  R\left(\frac{a_1,\ldots,a_r}{a_1}\right) = \{y \in \Spa(A,A^+) \mid |a_i(y)| \le |a_1(y)| \ne 0~\forall~i=1,\ldots,f\}.
 \]
 The corresponding Huber pair is the completion of
 \[
  (A_{a_1},A^+[\frac{a_2}{a_1},\ldots,\frac{a_r}{a_1}]^N),
 \]
 where $(\cdot)^N$ refers to taking the integral closure in $A_{a_1}$.
 Since $a_1 \in I$, it is topologically nilpotent.
 Its image in~$A_{a_1}$ is also topologically nilpotent and moreover, it is a unit.
 Therefore $A_{a_1}$ is a Tate ring.
 
 For the converse direction we just need to note that a Tate ring with topolologically nilpotent unit~$\varpi$ does not have open prime ideals as every open ideal has to contain the unit~$\varpi$.
\end{proof}

From this characterization we see that rigid analytic varieties (defined in the next section) and perfectoid spaces (discussed in Ben Heuer's lecture (\cite{HeuerPerfectoid}) are analytic.

\begin{lemma} \label{nonanalytic_to_nonanalytic}
 Let $f:X \to Y$ be a morphism of Huber pairs.
 Then~$f$ takes non-analytic points to non-analytic points.
\end{lemma}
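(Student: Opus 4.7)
The plan is to unwind the definition of non-analytic and reduce everything to a statement about supports of valuations under a continuous ring homomorphism. Recall that a point $x$ of an adic space is non-analytic precisely when $\supp x$ is an \emph{open} prime ideal in (any/some) affinoid neighborhood. So the goal becomes: if $x \in \Spa(A,A^+)$ has $\supp x$ open in $A$, then $f(x) \in \Spa(B,B^+)$ has $\supp f(x)$ open in $B$.

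First I would reduce to the affinoid setting. Given $x \in X$ non-analytic, pick an affinoid open neighborhood $\Spa(A,A^+) \subseteq X$ witnessing the non-analyticity, i.e., $\supp x$ is open in $A$. Pick any affinoid open neighborhood $\Spa(B,B^+)$ of $f(x)$ in $Y$. By shrinking $\Spa(A,A^+)$ (replacing it by a rational subset inside $f^{-1}(\Spa(B,B^+))$ containing $x$) we may assume that $f$ restricts to a morphism $\Spa(A,A^+) \to \Spa(B,B^+)$ induced by a continuous homomorphism of Huber pairs $\varphi:(B,B^+) \to (A,A^+)$. Note that this shrinking preserves non-analyticity of $x$, because rational localizations are adic morphisms, and an adic morphism pulls back the open prime $\supp x$ to an open prime ideal (equivalently, analyticity is well-defined on $X$, as already noted after \Cref{characterize_analytic}).

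Next, the image point $f(x)$ corresponds, by construction, to the valuation $v_x \circ \varphi$ on $B$, where $v_x$ is a representative of $x$. Therefore
\[
 \supp f(x) \;=\; \ker(v_x \circ \varphi) \;=\; \varphi^{-1}(\ker v_x) \;=\; \varphi^{-1}(\supp x).
\]
Since $\varphi: B \to A$ is continuous and $\supp x \subseteq A$ is open by hypothesis, its preimage $\varphi^{-1}(\supp x)$ is an open prime ideal of $B$. By definition this means exactly that $f(x)$ is non-analytic.

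There is essentially no hard step: the only subtlety is the reduction to the affinoid case and the fact that analyticity is independent of the affinoid neighborhood chosen to test it. Both points rely on the observation that rational localizations are adic and that adic morphisms pull back open ideals to open ideals, which follows from the definition of an adic homomorphism (the ideal of definition of the source generates an ideal of definition of the target).
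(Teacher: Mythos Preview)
Your proof is correct and follows essentially the same approach as the paper: reduce to the affinoid situation and observe that $\supp f(x) = \varphi^{-1}(\supp x)$ is open because $\varphi$ is continuous. Your write-up is more careful about the reduction step and the well-definedness of (non-)analyticity, but the core argument is identical.
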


\begin{proof}
 We may assume that~$f$ is given by a homomorphism of Huber pairs $\varphi:(A,A^+) \to (B,B^+)$.
 A non-analytic point~$x$ has open support $\supp x$.
 Its image in~$Y$ has support $\varphi^{-1}(\supp x)$, which is again open.
\end{proof}

It can happen, however, that a morphism takes analytic points to non-analytic points.
For instance, take a Tate ring~$R$ with ring of integral elements~$R^+$.
Let $R_\disc$ be the same ring~$R$ but endowed with the discrete topology and set $R_\disc^+ = R^+$.
Then the identity induces a homomorphism of Huber pairs $(R_\disc,R_\disc^+) \to (R,R^+)$.
Consider the corresponding morphism of adic spaces
\[
 \Spa(R,R) \to \Spa(R_\disc,R_\disc^+).
\]
All points of $\Spa(R,R^+)$ are analytic and all points of $\Spa(R_\disc,R_\disc^+)$ are non-analytic.
Something like this cannot happen for adic morphisms:

\begin{lemma} \label{analytic_to_analytic}
 Every adic morphism $f:X \to Y$ takes analytic points to analytic points.
\end{lemma}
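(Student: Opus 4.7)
My plan is to argue by contrapositive, using the dual statement in Lemma~\ref{nonanalytic_to_nonanalytic} as a model. Assume $x\in X$ maps to a non-analytic point $y=f(x)\in Y$. Working locally, I may reduce to the affinoid situation, where $f$ is induced by an adic homomorphism of Huber pairs $\varphi\colon (A,A^+)\to (B,B^+)$. Then non-analyticity of $y$ means $\mathfrak{p}:=\varphi^{-1}(\supp x)=\supp y$ is open in $A$, and I want to deduce that $\supp x$ is open in $B$.

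The key step uses the definition of adicity. Pick rings of definition $A_0\subseteq A$ and $B_0\subseteq B$ with $\varphi(A_0)\subseteq B_0$ and an ideal of definition $I_A\subseteq A_0$ such that $J:=\varphi(I_A)B_0$ is an ideal of definition of $B$. Since $\mathfrak{p}$ is open, it contains some power $I_A^n$ of our ideal of definition of $A$; because $\mathfrak{p}$ is prime, this forces $I_A\subseteq \mathfrak{p}$. Applying $\varphi$ gives $\varphi(I_A)\subseteq \supp x$, and since $\supp x$ is an ideal of $B$ and $B_0\subseteq B$, it absorbs multiplication by $B_0$, yielding
\[
 J=\varphi(I_A)B_0\subseteq \supp x.
\]
As $J$ is an ideal of definition of $B$, this says that $\supp x$ contains a fundamental open neighborhood of $0$, hence is open. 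So $x$ is non-analytic as desired.

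The only potentially delicate point is checking that $\supp x$ being open in $B$ in the sense required by the definition (open in the Huber ring topology) really follows from containing $J$; this is immediate, since the powers $J^m$ form a fundamental system of open neighborhoods of $0$ by the definition of ideal of definition, so any ideal containing $J$ is open. No further argument is needed, and the proof reduces to this one diagram chase. The main obstacle, such as it is, is to remember that adicity is exactly the hypothesis that lets us transport an ideal of definition from $A$ to $B$ along $\varphi$ without shrinkage; without it, $\varphi(I_A)B_0$ need not generate the topology of $B$, which is precisely the failure illustrated by the example $\Spa(R,R^+)\to\Spa(R_{\mathrm{disc}},R_{\mathrm{disc}}^+)$ immediately preceding the lemma.
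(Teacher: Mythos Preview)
Your argument is correct and is essentially an unpacking of the paper's one-line proof, which simply records the observation that for an adic homomorphism $\varphi:(R,R^+)\to(S,S^+)$ an ideal $J\subseteq S$ is open if and only if $\varphi^{-1}(J)$ is open; you have written out the relevant direction of this observation explicitly (and used primality of $\supp x$ as a mild shortcut).
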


\begin{proof}
 This follows from the observation that for an adic homomorphism of Huber pairs $\varphi:(R,R^+) \to (S,S^+)$ an ideal $J \subseteq S$ is open if and only if $\varphi^{-1}(J)$ is open.
\end{proof}

\begin{lemma} \label{analytic_adic}
 Every morphism of analytic adic spaces is adic.
\end{lemma}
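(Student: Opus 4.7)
The plan is to reduce to the affinoid case and then construct compatible rings of definition. Since the property of being adic is local on source and target, it suffices to show that for any continuous homomorphism of Huber pairs $\varphi:(B,B^+) \to (A,A^+)$ with both $A$ and $B$ Tate, the induced morphism is adic. To get there, given $f:X \to Y$ and a point $x \in X$, I would first invoke Proposition~\ref{characterize_analytic} to pick a Tate affinoid open $V = \Spa(B,B^+)$ around $f(x)$ in~$Y$, then again by Proposition~\ref{characterize_analytic} choose a Tate affinoid open around~$x$ in~$X$, and shrink it by a rational subset until its image lies in~$V$. A rational localization of a Tate Huber pair is again Tate (the topologically nilpotent unit descends), so the shrunk affinoid is still Tate and we are reduced to the affinoid statement.

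For that statement, let $\varpi \in B$ be a topologically nilpotent unit and pick a ring of definition $B_0 \subseteq B$ containing~$\varpi$, so that $(\varpi)B_0$ is an ideal of definition. The element $\varphi(\varpi) \in A$ is topologically nilpotent by continuity of~$\varphi$, and is a unit because $\varpi \in B^{\times}$. Next I would build a ring of definition $A_0 \subseteq A$ compatible with~$\varphi$: start from any ring of definition $A_0'$ of~$A$, note that $\varphi(B_0)$ is bounded in~$A$ (being the continuous image of a bounded set), and let $A_0$ be the subring of~$A$ generated by $A_0' \cup \varphi(B_0)$. Since a subring of~$A$ generated by two bounded open subrings is again bounded and open, $A_0$ is a ring of definition, and by construction $\varphi(B_0) \subseteq A_0$. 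The remaining task is to check that $\varphi(\varpi)A_0$ is an ideal of definition of~$A_0$.

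The two defining conditions for being an ideal of definition need to be verified. The first, that the powers $\varphi(\varpi)^n A_0$ become arbitrarily small, is immediate: $\varphi(\varpi)$ is topologically nilpotent, so $\varphi(\varpi)^n$ eventually lies in any prescribed ideal of definition $J$ of~$A_0$, and then $\varphi(\varpi)^n A_0 \subseteq J$. The second condition, that $\varphi(\varpi)A_0$ itself is open, is the one place a small argument is needed and I expect it to be the main obstacle of the proof. The clean way to see it is that, because $\varphi(\varpi)$ is a unit of the topological ring~$A$, multiplication by $\varphi(\varpi)$ is a homeomorphism, so $\varphi(\varpi)A_0$ is the homeomorphic image of the open subgroup~$A_0$ and is therefore open. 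Alternatively, one can fix an ideal of definition $J = (a_1,\ldots,a_r)$ of~$A_0$, use that each $a_i$ is topologically nilpotent to deduce $a_i^{k_i} \in \varphi(\varpi)A_0$ for some $k_i$, and then a pigeonhole argument on monomials of total degree $\ge r\max_i k_i$ upgrades this to $J^K \subseteq \varphi(\varpi)A_0$. Once $\varphi(\varpi)A_0$ is known to be an ideal of definition, the equality $\varphi(I_B)A_0 = \varphi(\varpi)A_0$ (for $I_B = (\varpi)B_0$) shows that $\varphi$ is adic, which proves the lemma.
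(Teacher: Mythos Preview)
Your proof is correct and follows essentially the same approach as the paper: reduce to a homomorphism of Tate Huber pairs via Proposition~\ref{characterize_analytic}, choose compatible rings of definition, and observe that the image of a topologically nilpotent unit generates an ideal of definition on the target. The paper is more terse---it simply invokes the general fact that a topologically nilpotent unit contained in a ring of definition generates an ideal of definition---whereas you spell out the construction of $A_0$ and the verification that $\varphi(\varpi)A_0$ is open; but the underlying argument is the same.
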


\begin{proof}
 It suffices to consider a morphism induced by a homomorphism of Tate rings $\varphi:(R,R^+) \to (S,S^+)$ (see Proposition~\ref{characterize_analytic}).
 Let $R_0$ and $S_0$ be rings of definition with $\varphi(R_0) \subseteq S_0$.
 The ideal generated by any topologically nilpotent unit $\varpi$ of~$R$ which is contained in~$R_0$ is an ideal of definition of~$R_0$.
 But $\varphi(\varpi)$ is a topologically nilpotent unit of $S$ that is contained in~$S_0$, so it generates an ideal of definition.
\end{proof}

We learn from Lemma~\ref{nonanalytic_to_nonanalytic} and Lemma~\ref{analytic_to_analytic} that adic morphisms take analytic points to analytic points and non-analytic points to non-analytic points.
In particular, for an adic space~$X$ over $\Spa(\Z_p,\Z_p)$ whose structure morphism $X \to \Spa(\Z_p,\Z_p)$ is adic, the analytic points are those mapping to the $p$-adic valuation of~$\Q_p$ and the non-analytic points those mapping to the trivial valuation of~$\F_p$.
However, there are relevant examples of adic spaces over $\Spa(\Z_p,\Z_p)$ with non-adic structure morphism.

\begin{example} \label{ZpT}
 Consider the Huber ring $\Z_p\llbracket T \rrbracket$ with pair of definition $(\Z_p\llbracket T \rrbracket,(p,T))$.
 We want to study the adic space
 \[
  \cX:= \Spa(\Z_p\llbracket T \rrbracket) = \Spa(\Z_p\llbracket T \rrbracket,\Z_p\llbracket T \rrbracket)
 \]
 over $\Spa(\Z_p)$.
 Its structure morphism is not adic as the ideal of definition for $\Z_p\llbracket T \rrbracket$ has in addition to~$p$ the generator~$T$.
 Let us denote by~$\eta$ the point of $\Spa(\Z_p)$ corresponding to the $p$-adic valuation and by $s$ the point corresponding to the trivial valuation of~$\F_p$.
 As the notation suggests, $\eta$ is the generic point and~$s$ the closed point of $\Spa(\Z_p)$.
 Moreover, as noted before, $\eta$ is analytic and~$s$ is non-analytic.
 
 From Lemma~\ref{nonanalytic_to_nonanalytic} we know that all points mapping to~$\eta$ are supposed to be analytic.
 Indeed, the supports of these points are contained in the generic fiber of
 \[
  \Spec \Z_p\llbracket T \rrbracket \longrightarrow \Spec \Z_p,
 \]
 which is the set of all prime ideals \emph{not} containing~$p$.
 But such a prime ideal cannot be open as open prime ideals contain $(p,T)$.
 
 Among the points mapping to~$s$ there are many analytic points and only one non-analytic point.
 Namely, the support of a non-analytic point has to contain $(p,T)$.
 But $(p,T)$ is a maximal ideal with residue field $\F_p$, which has only the trivial valuation.
\end{example}

For an adic space~$X$ we denote by~$X_a$ the subset of analytic points and by $X_{na}$ the set of non-analytic points. Then Proposition \ref{characterize_analytic} immediately implies the following:

\begin{lemma}
 The set of analytic points~$X_a$ of an adic space~$X$ is open.
\end{lemma}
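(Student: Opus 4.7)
The plan is to show that every analytic point has an entire open neighborhood consisting of analytic points, using Proposition~\ref{characterize_analytic} as the main tool.

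First, let $x \in X_a$. By Proposition~\ref{characterize_analytic}, since $x$ is analytic, it admits an affinoid open neighborhood $U = \Spa(A,A^+)$ such that $A$ is a Tate ring. I would then argue that every point of $U$ is analytic as a point of $\Spa(A,A^+)$: this is precisely the content of the backward direction of Proposition~\ref{characterize_analytic}, namely that in a Tate ring $A$ with topologically nilpotent unit $\varpi$, no prime ideal $\p \subseteq A$ can be open. Indeed, any open ideal must contain some power of $\varpi$ (since $(\varpi^n)$ form a fundamental system of neighborhoods of $0$ inside a ring of definition), but $\varpi$ is a unit, so any open ideal equals $A$ and hence cannot be prime.

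Consequently, $U \subseteq X_a$, which gives an open neighborhood of $x$ contained in $X_a$. Since $x \in X_a$ was arbitrary, $X_a$ is open in $X$.

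There is no serious obstacle here; the statement is a direct corollary of Proposition~\ref{characterize_analytic} combined with the definition of analyticity in terms of affinoid neighborhoods. The only subtlety worth making explicit is that the definition of an analytic point requires the existence of \emph{some} affinoid neighborhood witnessing analyticity, and Proposition~\ref{characterize_analytic} upgrades this to a Tate affinoid neighborhood, which then witnesses analyticity simultaneously for all of its points.
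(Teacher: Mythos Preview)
Your proof is correct and follows exactly the approach the paper takes: the paper simply states that the lemma is an immediate consequence of Proposition~\ref{characterize_analytic}, and you have spelled out precisely that deduction.
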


\begin{exercise}
 Let $X = \Spa(A,A^+)$ be an affinoid adic space,
 and fix
 a pair of definition $(A_0,I)$ as well as generators $s_1,\ldots,s_n$ for~$I$.
 Prove that
 \[
  X_a = \bigcup_{i=1}^n R\left(\frac{s_1,\ldots,s_n}{s_i}\right).
 \]
 In particular, for any quasicompact adic space $X$,
 the subset $X_a$ is even quasicompact open.
\end{exercise}

\begin{example} \label{ZpT_analytic_locus}
 Let us illustrate this by picking up Example~\ref{ZpT}.
 The analytic locus of $\cX=\Spa(\Z_p\llbracket T \rrbracket)$ is given by the union of two affinoids:
 \[
  \cX_a = R\left(\frac{p}{T}\right) \cup R\left(\frac{T}{p}\right).
 \]
 The rational subset $R(T/p)$ is contained in the generic fiber (the preimage of~$\eta$) as all of its points $x \in R(T/p)$ satisfy $|p(x)| \ne 0$.
 However, it does not cover all of the generic fiber as some points of the latter might satisfy $|p(x)| \ne 0$ but $|p(x)| < |T(x)|$.
 The generic fiber is open, having the following description
 \[
  \cX_\eta = \{x \in X \mid |p(x)| \ne 0\}.
 \]
 But it is not rational as the ideal generated by~$p$ is not open.
 We can cover it by rational open subsets in the following way:
 \[
  \cX_\eta=\bigcup_{n\geq 1}R\left(\frac{T^n}{p}\right).
 \]
 These sets are indeed rational as the ideals $(p,T^n)$ are open.
 Moreover, they cover~$\cX_\eta$ for the following reason.
 Since $T$ is topologically nilpotent, every open neighborhood of~$0$ in~$\Z_p\llbracket T \rrbracket$ contains a power of~$T$.
 This implies that for a continuous valuation~$v$ of $\Z_p\llbracket T \rrbracket$ with $v(p) \ne 0$ (i.e., any point $v$ of $\cX_\eta$), there is $n \in \N$ with
 \[
  v(T^n) < v(p).
 \]
 Thus the above union of rationals equals~$\cX_\eta$.
 
 At last we want to point out that we have come across the space~$\cX_\eta$ before.
 This might be more obvious when writing the rational subsets in the covering of~$\cX_\eta$ as spectra of Huber pairs:
 \[
  R\left(\frac{T^n}{p}\right) = \Spa\left(\Z_p\llbracket T \rrbracket\left[\frac{1}{p}\right], \Z_p\llbracket T \rrbracket\left[ \frac{T^n}{p}\right]\right).
 \]
 This might not yet look very familiar but it will once we have computed the completion of $(\Z_p\llbracket T \rrbracket[1/p], \Z_p\llbracket T \rrbracket[T^n/p])$.
 Note that a pair of definition is given by
 \[
  \left(\Z_p\llbracket T \rrbracket\left[\frac{T^n}{p}\right], (p,T)\right).
 \]
 Observing that
 \[
  T^n = \frac{T^n}{p} \cdot p \in (p),
 \]
 we realize that actually also $(p)$ is an ideal of definition.
 The $p$-adic completion of $\Z_p\llbracket T \rrbracket [T^n/p]$ can be computed as follows.
 Modulo $p^i$ every element of $\Z_p\llbracket T \rrbracket[T^n/p]$ has a representative which is a polynomial (of degree less than~$in$).
 Therefore, the completion of $\Z_p\llbracket T \rrbracket[T^n/p]$ is the same as the $p$-adic completion of
 \[
  \Z_p[T]\left[\frac{T^n}{p}\right] = \Z_p[T,S]/(pS-T^n),
 \]
 which equals
 \[
  \Z_p \langle T, S \rangle/(pS-T^n).
 \]
 Inverting~$p$ we get the completion of $\Z_p \llbracket T \rrbracket [1/p]$:
 \[
  \Q_p \langle T,S \rangle/(pS-T^n).
 \]
 Now we can recognize that
 \[
  R\left(\frac{T^n}{p}\right) = \Spa(\Q_p \langle T,S \rangle/(pS-T^n),\Z_p \langle T,S \rangle/(pS-T^n))
 \]
 is the disc of radius $p^{1/n}$ and the union over all $n \in \N$ gives the open unit disc:
 \[
  \cX_\eta = \bigcup_{n=1}^\infty \DD_{\Q_p}(0,p^{1/n}) = \Dcirc_{\Q_p}(0,1).
 \]
 See Example~\ref{open_unit_disc} for the discussion of the open unit disc.
\end{example}

\section{Morphisms of finite type}

In this section we explain what it means for a morphism of adic spaces to be of finite type.
It is too restrictive to require the corresponding ring maps to be of finite type.
Already the Tate algebra
\[
 k\langle T \rangle = \left\{\sum_{n \ge 0} a_n T^n \in k \llbracket T \rrbracket \mid a_n \to 0 \right\}
\]
over a nonarchimedean field~$k$ is not finitely generated but $k \langle T \rangle$ is supposed to be a topological analogue of the ring of polynomials $k[T]$.
Note that we can state the property of a ring homomorphism $R \to S$ to be of finite type by requiring~$S$ to be a quotient of a polynomial ring over~$R$ in finitely many variables.
This notion can be transported to the realm of Huber rings by considering topological quotients:

\begin{definition}
 A homomorphism $R \to S$ of complete Huber rings is a \emph{quotient map} if it is surjective, continuous, and open.
 A homomorphism $(R,R^+) \to (S,S^+)$ of Huber pairs is a \emph{quotient map} if $R \to S$ is a quotient map and $S^+$ is the relative integral closure of the image of~$R^+$ in~$S$.
\end{definition}

Any closed ideal $I \subseteq R$ defines a quotient map from $R$ to $S:=R/I$ equipped with the quotient topology.
We can enhance this to a quotient map of Huber pairs by defining~$S^+$ to be the integral closure of $R^+/(I \cap R^+)$ in~$S$.
We often denote this quotient Huber pair by $(R,R^+)/I$.

Now a straightforward replacement for ring homomorphisms of finite type in the topological setting is the following.

\begin{definition}
 A homomorphism $R \to S$ of Huber rings is \emph{strictly of topologically finite type} if there is a quotient map
 \[
  R\langle T_1,\ldots,T_n\rangle \twoheadrightarrow \hat{S}
 \]
 of $\hat{R}$-algebras.
\end{definition}

However, this class of $R$-algebras results to be too small to cover all important constructions.
Even an (algebraically) finitely generated $R$-algebra might not be a quotient of any Tate algebra $R \langle T_1,\ldots,T_n\rangle$.
In order to see this we take a look at rings of definition.

\begin{exercise} \label{ring_of_def_quotient}
 Let $R$ and~$S$ be Huber rings and
 \[
  \pi : R \langle T_1,\ldots,T_n \rangle \twoheadrightarrow S
 \]
 a quotient map.
 If $R_0 \subseteq R$ is a ring of definition, then $\pi(R_0 \langle T_1,\ldots,T_n \rangle)$ is a ring of definition of~$S$.
\end{exercise}

We can now give a simple example of a finite type $R$-algebra that is not strictly of topologically finite type.

\begin{example}
 We consider the Huber ring~$\Z_p$ and the $\Z_p$-algebra $\Q_p$.
 Since $\Q_p = \Z_p[1/p]$, it is finitely generated over~$\Z_p$.
 However, the following argument shows that it is not strictly of topologically finite type.
 Suppose there is a quotient map
 \[
  \Z_p \langle T_1,\ldots,T_n \rangle \twoheadrightarrow \Q_p
 \]
 The ring $\Z_p$ serves as its own ring of definition.
 It thus follows from Exercise~\ref{ring_of_def_quotient} that~$\Q_p$ is its own ring of definition, which is not true.
 Therefore, $\Q_p$ cannot be strictly of topologically finite type over~$\Z_p$.
 
 One can also see this more explicitly by checking that for any $a \in \Q_p \setminus \Z_p$ the homomorphism
 \begin{align*}
  \Z_p[T]	& \longrightarrow \Q_p	\\
  T			& \longmapsto a
 \end{align*}
 is \emph{not} continuous, where $\Z_p[T]$ is endowed with the $p$-adic topology.
 Otherwise we would get a homomorphism
 \[
  \Z_p \langle T \rangle \longrightarrow \Q_p
 \]
 mapping $T$ to~$a$.
 But $a^{-1} \in \Z_p$ and $a^{-1}T-1$ is contained in the kernel (it would in fact generate the kernel).
 However, in $\Z_p \llbracket T \rrbracket$ we have the identity
 \[
  \frac{1}{a^{-1}T-1} = \sum_{i=0}^\infty a^{-i} T^i
 \]
 and the series is in fact contained in $\Z_p \langle T \rangle$ as $|a| > 1$.
 Therefore, $a^{-1}T-1$ is invertible and generates the unit ideal.
 We would get a factorization
 \[
  \Z_p \langle T \rangle \longrightarrow \Z_p \langle T \rangle /(a^{-1} T -1) = 0 \longrightarrow \Q_p,
 \]
 a contradiction.
\end{example}

The example suggests that in addition to quotients of Tate algebras one should include quotients of more general $R$-algebras.
These generalized Tate algebras are constructed as follows.
We take finite subsets $M_1,\ldots,M_n$ of~$R$ such that for any $i$ and any $r \in \N$ the ideal
\[
 M_i^r R := \langle m_1 \cdots m_r a \mid m_1,\ldots,m_r \in M_i,~a \in R \rangle
\]
is open (the angle brackets stand for the subgroup generated by the listed elements).
Often we write~$M$ for the tuple $(M_1,\ldots,M_n)$ of finite sets.
We associate with each such tuple~$M$ of finite sets an $R$-algebra of \emph{weighted convergent power series}
\begin{align*}
 R \langle T \rangle_M	& := R \langle T_1,\ldots,T_n \rangle_M	\\
						& := \left\{\sum_{i \in \N^n} a_i T^i \in \hat{R}\llbracket T \rrbracket \mid \forall~0 \in U \subseteq \hat{R}~\text{open}:~a_i \in M^i U~\text{for almost all~$i$}\right\}
\end{align*}
Here we use the short notation~$T$ for the collection of the variables $T_1,\ldots,T_n$ and set
\[
 T^i := T^{i_1}\cdots T^{i_n}
\]
for $i \in \N^n$.
Likewise, for an open neighborhood $U \subseteq R$ of~$0$, we write
\[
 M^i U := M_1^{i_1} \cdots M_n^{i_n}U.
\]
With this notation we often replace the condition that $M_i^r R$ be open for all $i=1,\ldots,n$ and for all $r \in \N$ by the equivalent condition that $M^i R$ be open for all $i \in \N^n$.
Let us give this property a name:
We call a tuple $M = (M_1,\ldots,M_n)$ \emph{voluminous} if $M^i R$ is open for all $i \in \N^n$.

\begin{example}
 We can rewrite the Huber rings $k \langle \varpi^k T \rangle$ from Example~\ref{example_affine_line} as
 \[
  k \langle \varpi^k T \rangle = k \langle T \rangle_{\{\varpi^k\}}.
 \]
\end{example}

We need to give $R \langle T \rangle_M$ the structure of a Huber ring.
Starting with a pair of definition $(R_0,I)$ for~$R$ we specify the ring of definition
\[
 R_0 \langle T \rangle_M := \left\{\sum_{i \in \N^n} a_i T^i \in R \langle T \rangle \mid a_i \in M^i\hat{R}_0~\forall i \in \N\right\}
\]
and take the ideal of definition to be the ideal of $R_0 \langle T \rangle_M$ generated by~$I$.
Then for every open subgroup~$U$ of~$R$ the subgroup
\[
 U\langle T \rangle_M := \left\{\sum_{i \in \N^n} a_i T^i \in R \langle T \rangle \mid a_i \in M^i\hat{U}~\forall i \in \N^n\right\}
\]
is open in $R \langle T \rangle_M$.
In fact the collection of all $U \langle T \rangle_M$ as~$U$ varies over the open subgroups of $R$ is a fundamental system of open neighborhoods of zero.
Finally we define the ring of integral elements
\[
 R \langle T \rangle_M^+
\]
to be the integral closure of $R^+ \langle T \rangle_M$ in $R \langle T \rangle_M$.

\begin{definition}
 Let $R \to S$ be a homomorphism of Huber rings.
 Then $S$ is of \emph{topologically finite type} over~$R$ if there is a voluminous tuple $M = (M_1,\ldots,M_n)$ of finite subsets of~$R$ and a quotient map
 \[
  R \langle T \rangle_M \twoheadrightarrow S
 \]
 of $R$-algebras.
\end{definition}

\begin{example}
 Now luckily~$\Q_p$ is of topologically finite type over~$\Z_p$.
 Indeed
 \[
  \Q_p \cong \Z_p \langle T \rangle_{\{p\}} /(pT -1).
 \]
 The crucial point is that
 \[
  \frac{1}{pT-1} = \sum_{i=0}^\infty p^i T^i \in \Z_p \llbracket T \rrbracket
 \]
 is not contained in
 \[
  \Z_p \langle T \rangle_{\{p\}} = \left\{\sum_{i=0}^\infty a_i T^i \mid \forall r \in \N:~a_i \in p^{i+r}\Z_p~\text{for almost all}~i \in \N\right\}.
 \]
 Therefore $(pT-1)$ is not the unit ideal in $\Z_p \langle T \rangle$ and dividing out $(pT-1)$ results in inverting~$p$ in~$\Z_p$.
\end{example}

We get back the familiar ring of converging power series by choosing $M_i = \{1\}$ for all~$i$:
\[
 R \langle T_1,\ldots,T_n \rangle = R \langle T \rangle_{(\{1\},\ldots,\{1\})}.
\]
In particular, all $R$-algebras that are of strictly topologically finite type are also of topologically finite type.

It turns out that we need to consider the more complicated $R$-algebras $R \langle T \rangle_M$ only if we leave the realm of Tate rings.
This is not obvious by just looking at the definitions.
We need to take a detour via an alternative description of $R$-algebras (strictly) of topologically finite type.

\begin{proposition} \label{characterization_tft}
 For a homomorphism $\varphi: R \to S$ of Huber rings such that~$S$ is complete the following are equivalent:
 \begin{enumerate}[(i)]
  \item	$S$ is (strictly) of topologically finite type over~$R$,
  \item	There are rings of definition $R_0 \subseteq R$ and $S_0 \subseteq S$ with $\varphi(R_0) \subseteq S_0$ such that $R_0 \to S_0$ is strictly of topologically finite type and $S$ is finitely generated over (equal to) $\varphi(R) \cdot S_0$.
  \item	For every open subring $R_0 \subseteq R$ there is an open subring $S_0 \subseteq S$ with $\varphi(R_0) \subseteq S_0$ such that $R_0 \to S_0$ is strictly of topologically finite type and $S$ is finitely generated over (equal to) $\varphi(R) \cdot S_0$.
 \end{enumerate}
 Moreover, in the non-strict case we have the following equivalent characterization.
 \begin{enumerate}[(i)]
  \setcounter{enumi}{3}
  \item	$\varphi$ is adic, there are rings of definition $R_0 \subseteq R$ and $S_0 \subseteq S$ with $\varphi(R_0) \subseteq S_0$ and finite subsets $M \subseteq S$ and $N \subseteq S_0$ such that $\varphi(R)[M]$ is dense in~$S$ and $\varphi(R_0)[N]$ is dense in~$S_0$.
 \end{enumerate}
\end{proposition}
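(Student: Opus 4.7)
The plan is to establish the equivalence (i) $\Leftrightarrow$ (ii) $\Leftrightarrow$ (iii) via the cycle (iii) $\Rightarrow$ (ii) $\Rightarrow$ (i) $\Rightarrow$ (iii), and then treat (iv) separately via a density argument. Throughout, one uses the weighted analogue of Exercise~\ref{ring_of_def_quotient}: the image of a ring of definition under a quotient map of Huber rings is again a ring of definition.

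The implication (iii) $\Rightarrow$ (ii) is immediate. For (ii) $\Rightarrow$ (i) in the strict case, one base-changes the given surjection $R_0\langle U_1,\ldots,U_n\rangle \twoheadrightarrow S_0$ along $R_0 \hookrightarrow R$, obtaining a quotient map $R\langle U_1,\ldots,U_n\rangle \twoheadrightarrow \varphi(R)\cdot S_0 = S$, with openness inherited from that of the original quotient. In the non-strict case one adjoins extra variables $U_{n+1},\ldots,U_{n+m}$ mapping to finite generators $s_1,\ldots,s_m$ of $S$ over $\varphi(R)\cdot S_0$; since the $s_j$ need not be power-bounded, one must use a weighted algebra $R\langle U\rangle_M$ whose weights $M_{n+j} \subseteq R$ are chosen so that $\varphi(M_{n+j})\cdot s_j \subseteq S_0$, which is possible because $S_0$ absorbs sufficiently small multiples.

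For (i) $\Rightarrow$ (iii), the main step, fix a quotient $\pi: R\langle T\rangle_M \twoheadrightarrow S$ with $M = (M_1,\ldots,M_n)$ and an open subring $R_0 \subseteq R$. Define $S_0 := \pi(R_0\langle T\rangle_M)$, which is a ring of definition of $S$ by the observation above. The crux is that $R_0\langle T\rangle_M$ is itself strictly of topologically finite type over $R_0$: writing $M_j = \{m_{j,1},\ldots,m_{j,k_j}\}$, the assignment $U_{j,l}\mapsto m_{j,l}T_j$ defines a surjective continuous $R_0$-algebra homomorphism $R_0\langle\{U_{j,l}\}\rangle \twoheadrightarrow R_0\langle T\rangle_M$, because any coefficient $a_i \in M^i R_0$ expands as a finite $R_0$-linear combination of monomials in the $m_{j,l}$'s; openness is checked by tracking ideals of definition. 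Composing with $\pi$ yields a strictly tft quotient $R_0\langle\{U_{j,l}\}\rangle \twoheadrightarrow S_0$. Finally, the elements $\pi(T_1),\ldots,\pi(T_n) \in S$ finitely generate $S$ over $\varphi(R)\cdot S_0$; in the strict case $M_j=\{1\}$, so $\pi(T_j) \in S_0$ and equality holds.

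For (i) $\Leftrightarrow$ (iv): in the $(\Rightarrow)$ direction, adicness of $\varphi$ is immediate from the construction of $R\langle T\rangle_M$ as a Huber ring over $R$. Taking $M_{\text{(iv)}} := \{\pi(T_j)\}$ and $N := \{\pi(m T_j) : m \in M_j,\ j=1,\ldots,n\}$, the required density statements reduce to the density of polynomials in $R\langle T\rangle_M$ (respectively in $R_0\langle T\rangle_M$) via truncation of weighted series, combined with the fact that a continuous surjection sends dense subsets to dense subsets. For $(\Leftarrow)$, one constructs a quotient map $R\langle T_1,\ldots,T_k\rangle_{M'} \twoheadrightarrow S$ sending $T_i \mapsto m_i$, with weights $M'_i \subseteq R$ chosen so that $\varphi(M'_i)\cdot m_i \subseteq S_0$ (possible by adicness and boundedness of $S_0$); surjectivity uses density of $\varphi(R)[M]$ in $S$ plus completeness of $S$, while openness uses density of $\varphi(R_0)[N]$ in $S_0$ to control the $S_0$-topology from below. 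I expect this openness verification---both here and in the (i) $\Rightarrow$ (iii) step---to be the main obstacle, as it requires careful tracing of the topology of weighted Tate algebras and possibly an open mapping argument for complete Huber rings.
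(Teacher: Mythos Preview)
The paper does not actually prove this proposition: its proof reads in full ``We refer to \cite{Lud20}, Proposition~2.2 and~2.3.'' There is therefore no argument in the paper to compare your proposal against.

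Your sketch is broadly in line with how such results are proved in the literature (Huber, Ludwig), and you correctly identify the main mechanism: the weighted Tate algebra $R_0\langle T\rangle_M$ is strictly of topologically finite type over $R_0$ via the substitution $U_{j,l}\mapsto m_{j,l}T_j$, and the images $\pi(T_j)$ supply the finite generating set for $S$ over $\varphi(R)\cdot S_0$. One point to watch in your (i) $\Rightarrow$ (iii) step is that the statement allows $R_0$ to be an \emph{arbitrary} open subring of $R$, not necessarily a ring of definition; you should check that your construction of $S_0 = \pi(R_0\langle T\rangle_M)$ and the subsequent argument still go through in that generality (in practice one reduces to the case where $R_0$ contains a ring of definition of $R$, which any open subring does). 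You are right that the openness checks in (ii) $\Rightarrow$ (i) and in the $(\Leftarrow)$ direction of (iv) are where most of the work lies; these are handled carefully in the cited reference.
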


\begin{proof}
 We refer to \cite{Lud20}, Proposition~2.2 and~2.3.
\end{proof}

\begin{corollary}
 Any homomorphism $\varphi:R \to S$ of Tate rings of topologically finite type is strictly of topologically finite type.
\end{corollary}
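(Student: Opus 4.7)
The plan is to use Proposition~\ref{characterization_tft}(ii) in both directions: apply its non-strict version to unpack the hypothesis, and then re-verify its strict version using the Tate assumption. From the non-strict part of~(ii), the hypothesis furnishes rings of definition $R_0\subseteq R$ and $S_0\subseteq S$ with $\varphi(R_0)\subseteq S_0$ such that $R_0\to S_0$ is strictly of topologically finite type and $S$ is finitely generated as a module over $\varphi(R)\cdot S_0$. To conclude strict topologically finite type it then suffices to upgrade this finite generation to the equality $S=\varphi(R)\cdot S_0$, which is exactly what the strict version of~(ii) demands.

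The key leverage is that Tate-ness of the source allows one to clear denominators uniformly in the target. I would fix a topologically nilpotent unit $\varpi\in R$. Its image $\varphi(\varpi)\in S$ is again a topologically nilpotent unit: topological nilpotence is immediate from continuity of~$\varphi$, since $\varphi(\varpi)^n=\varphi(\varpi^n)\to 0$, and $\varphi(\varpi)$ inherits invertibility from~$\varpi$ via $\varphi(\varpi)^{-1}=\varphi(\varpi^{-1})$. For an arbitrary $s\in S$, continuity of multiplication by~$s$ then shows that $\varphi(\varpi)^N\cdot s\to 0$ in~$S$, so this element lies in the open subring $S_0$ for $N$ sufficiently large. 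Rewriting
\[
 s=\varphi(\varpi^{-N})\cdot\bigl(\varphi(\varpi)^N s\bigr)
\]
exhibits $s$ as an element of $\varphi(R)\cdot S_0$, and since $s$ was arbitrary, $S=\varphi(R)\cdot S_0$ as desired.

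There is essentially no substantive obstacle: the entire argument reduces to the clearing-denominators trick above, which simply cannot be carried out in the absence of a topologically nilpotent unit in~$R$. The one subtle point worth double-checking when writing out the details is that $\varphi(\varpi)$ really is a \emph{unit} in~$S$ (not merely topologically nilpotent); this is the precise place where the Tate hypothesis on~$R$, rather than just on~$S$, enters, since it is the invertibility of~$\varpi$ in~$R$ that supplies the inverse of $\varphi(\varpi)$ in~$\varphi(R)\subseteq S$.
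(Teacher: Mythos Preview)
Your proof is correct and follows essentially the same strategy as the paper: both invoke Proposition~\ref{characterization_tft}(ii) to reduce to showing $S=\varphi(R)\cdot S_0$, and both establish this by taking a topologically nilpotent unit $\varpi\in R$ and observing that every $s\in S$ lands in $S_0$ after multiplication by a sufficiently high power of $\varphi(\varpi)$. The paper packages this last step as the identity $S=S_0[1/\varpi_S]$ (citing \cite{bergdall2024huber}, \S~1.1.3), whereas you spell out the clearing-denominators argument directly; the only cosmetic omission is the reduction to $S$ complete, which the paper states explicitly before applying Proposition~\ref{characterization_tft}.
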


\begin{proof}
 We may assume that~$S$ is complete.
 Then by Proposition~\ref{characterization_tft} there are rings of definition $R_0 \subseteq R$ and $S_0 \subseteq S$ with $\varphi(R_0) \subseteq S_0$ such that $R_0 \to S_0$ is strictly of topologically finite type and $S$ is finitely generated over $\varphi(R) \cdot S_0$.
 Let $\varpi \in R_0$ be a topologically nilpotent unit.
 Its image~$\varpi_S$ in~$S$ is a topologically nilpotent unit as well and moreover contained in~$S_0$.
 By \cite{bergdall2024huber}, \S 1.1.3, we have
 \[
  S = S_0 \left[\frac{1}{\varpi_S}\right] \subseteq \varphi\left(R_0\left[\frac{1}{\varpi}\right]\right) \cdot S_0 = \varphi(R) \cdot S_0,
 \]
 so $S$ is in fact strictly of topologically finite type over~$R$.
\end{proof}

Now we want to include the rings of integral elements into the picture.
A homomorphism of Huber rings $(R,R^+) \to (S,S^+)$ should be topologically of finite type if $S$ is topologically of finite type over~$R$ plus some additional finiteness condition on $R^+ \to S^+$.
There are several variants that play a role, which lead to different finiteness notions.
The following two are the most common:

\begin{definition}
 Let $\varphi: (R,R^+) \to (S,S^+)$ be a homomorphism of complete Huber pairs.
 Then $\varphi$ is
 \begin{itemize}
  \item \emph{weakly of topologically finite type} if $R \to S$ is of topologically finite type.
  \item of \emph{topologically finite type} if there is a quotient map
		\[
		 (R\langle T\rangle_M,R\langle T\rangle_M^+) \twoheadrightarrow (S,S^+)
		\]
		of Huber pairs over $(R,R^+)$ for some tuple~$M$ of finite subsets of~$R$ such that $M^i U$ is open for every open neighborhood~$U$ of zero in~$R$ and every $i \in \N^n$.
 \end{itemize}
\end{definition}

\begin{example}
 Let $(R,R^+)$ be a complete Huber pair.
 \begin{itemize}
  \item For a rational open $U \subseteq \Spa(R,R^+)$, the Huber pair $(R_U,R_U^+)$ is of topologically finite type over $(R,R^+)$.
  \item $(R\langle T \rangle,R^+\langle T \rangle)$ is of topologically finite type over $(R,R^+)$.
  \item Let $(R_\disc,R_\disc^+) = (R,R^+)$ but with the discrete topology.
		Then the identity induces a (continuous) homorphism of Huber pairs $(R_\disc,R_\disc^+) \to (R,R^+)$, which in general is \emph{not} of topologically finite type.
 \end{itemize}
\end{example}

After these simple examples let us take a look at a more sophisticated one that shows the difference between topologically finite type and weak topologically finite type.

\begin{example} \label{closure_unit_disc}
 Let $(k,k^\circ)$ be an analytic field with pseudouniformizer~$\varpi$.
 We consider the adic unit disc
 \[
  \DD_k = \Spa(k\langle T \rangle,k^\circ \langle T \rangle).
 \]
 As a subspace of~$\A_k^1$ (or a disc of bigger radius, for instance $\DD_k(0,\varpi^{-1})$) it is not closed even though it looks like a closed unit disc and it is even called by that name.
 It turns out there is one point in its closure that is not contained in $\DD_k$ itself:
 Here we return to the constructions we discussed in Example~\ref{covering_unit_disc}.
 We consider the ordered group $\R_{>0} \times \R_{>0}$ but this time choose $\epsilon = (1,\epsilon_2) \in \R_{>0} \times \R_{>0}$ with $\epsilon_2 > 1$.
 The corresponding valuation
 \begin{align*}
  x_{0,1+} : k\langle T \rangle	& \longrightarrow \R_{>0} \times \R_{>0}	\\
  \sum_i a_i T^i				& \longmapsto \max_i |a_i| \epsilon^i
 \end{align*}
 is not contained in $\DD_k$ as
 \[
  |T(x_{0,1+})| = \epsilon > 1.
 \]
 But it is still a continuous valuation of $k\langle T \rangle$ by Proposition~1.2.7.1 in \cite{bergdall2024huber}:
 We have that $|\varpi(x_{0,1+})| = |\varpi|$ is cofinal in the value group and for every element $\sum_i a_i T^i$ in the ring of definition $k^\circ \langle T \rangle$ we look at
 \[
  |\varpi\left(\sum_i a_i T^i\right)(x_{0,1+})| = \max_i |\varpi a_i| \epsilon^i.
 \]
 The terms $|\varpi a_i|$ are of the form $(r_1,1) \in \R_{>0} \times \R_{>0}$ with $r_1 < 1$.
 So the product looks like $(r_1,\epsilon_2^i) < 1$.
 
 The closure $\overline{\DD}_k$ of $\DD_k$ turns out to be an affinoid adic space that is weakly of topologically finite type but not of topologically finite type.
 We define
 \[
  A^+ = \left\{\sum_i a_i T^i \in k\langle T \rangle \mid |a_0| \le 1,~|a_i| < 1~\forall~i \ge 1\right\}.
 \]
 This is open as $\varpi k^\circ \langle T \rangle \subseteq A^+$, and let us check that it is integrally closed:
 If $x$ is integral over~$A^+$, it is contained in $k^\circ \langle T \rangle$.
 Its reduction $\bar{x}$ in
 \[
  \tilde{k}[T] = k^\circ \langle T \rangle/\m_k k^\circ \langle T \rangle
 \]
 ($\m_k$ is the maximal ideal of~$k^\circ$ and $\tilde{k}$ its residue field) is integral over
 \[
  A^+/(\m_k k^\circ\langle T \rangle) = \tilde{k}.
 \]
 Hence~$\bar{x} \in \tilde{k}$.
 By definition this implies $x \in A^+$.
 
 Now let us compute the valuation of $\sum_i a_i T^i \in A^+$:
 \[
  |\left(\sum_i a_i T^i\right)(x_{0,1+})| = \max_i |a_i| \epsilon^i.
 \]
 For $i = 0$ we obtain $|a_0| \le 1$ and for $i \ge 1$ we get $(|a_i|,\epsilon^i) < 1$, so $x_{0,1+} \in \Spa(k\langle T \rangle,A^+)$.
 In fact it is true that
 \[
  \overline{\DD}_k = \Spa(k\langle T \rangle,A^+).
 \]
 But $(k\langle T \rangle,A^+)$ is not of topologically finite type over $(k,k^\circ)$.
\end{example}

\begin{definition}
 A morphism of adic spaces $Y \to X$ is \emph{locally (weakly) of finite type} if locally on~$X$ and~$Y$ it is given by a morphism of affinoids induced by a homomorphism of Huber pairs that is (weakly) of topologically finite type.
 It is \emph{of (weakly) finite type} if in addition it is quasi-compact.
\end{definition}

\begin{proposition} \label{tft_adic}
 Every morphism of adic spaces that is locally weakly of finite type is adic.
\end{proposition}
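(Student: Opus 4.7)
The plan is to reduce to the affinoid case and then invoke Proposition~\ref{characterization_tft}. Being adic is local on source and target, and being locally weakly of finite type is as well, so it suffices to show the following statement about Huber pairs: if $\varphi \colon (A,A^+) \to (B,B^+)$ is a homomorphism of complete Huber pairs such that the ring map $A \to B$ is of topologically finite type, then $\varphi$ is adic. First I would choose affinoid covers of $Y$ and $X$ witnessing the hypothesis, reducing to checking adicity of each of the resulting Huber pair morphisms.

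From here the cleanest route is to invoke the equivalence $(i) \Leftrightarrow (iv)$ of Proposition~\ref{characterization_tft}: condition $(iv)$ has ``$\varphi$ is adic'' built into its statement, so the result is immediate. In this sense the hard work has already been absorbed into that proposition and the present proof is essentially a citation.

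For a more hands-on argument I would start from a quotient presentation $\pi \colon A\langle T\rangle_M \twoheadrightarrow B$ furnished by the definition of topologically finite type, enlarge a chosen ring of definition $A_0 \subseteq A$ to contain the finitely many elements $\bigcup_i M_i$, and observe that the construction of the weighted Tate algebra then makes $A_0\langle T\rangle_M$ a ring of definition of $A\langle T\rangle_M$ with ideal of definition $I \cdot A_0\langle T\rangle_M$, where $I \subseteq A_0$ is any ideal of definition. Setting $B_0 := \pi(A_0 \langle T\rangle_M)$ and using the surjectivity and openness of $\pi$ (in the spirit of Exercise~\ref{ring_of_def_quotient}), one obtains that $B_0$ is a ring of definition of $B$ with $\varphi(I) B_0 = \pi(I \cdot A_0\langle T\rangle_M)$ as an ideal of definition, which is precisely the adicity of $\varphi$. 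The main obstacle in this hands-on approach is the topological bookkeeping around weighted Tate algebras: one has to verify that $A_0\langle T\rangle_M$ really is a ring of definition once $M \subseteq A_0$, and that quotient maps send the chosen ring of definition together with its ideal of definition to a ring of definition with ideal of definition of the target. Both are consequences of how the topology on $A\langle T\rangle_M$ was set up in the paper, but they are the only nontrivial inputs beyond definition chasing.
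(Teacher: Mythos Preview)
Your proposal is correct and your primary route---reducing to the affinoid case and invoking the equivalence $(i)\Leftrightarrow(iv)$ of Proposition~\ref{characterization_tft}---is exactly the paper's proof. The additional hands-on argument via a quotient presentation $A\langle T\rangle_M \twoheadrightarrow B$ is a nice supplement, but the paper itself settles for the one-line citation.
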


\begin{proof}
 This follows from characterization~(iv) in Proposition~\ref{characterization_tft} of homomorphisms of Huber pairs of topologically finite type.
\end{proof}

\begin{definition}
 Let $(k,k^\circ)$ be an analytic field.
 A \textit{rigid analytic variety} over $(k,k^\circ)$ is an adic space~$X$ of finite type over $(k,k^\circ)$.
\end{definition}

For those familiar with the more classical language of Tate's rigid analytic varieties we want to remark the following.
There is a fully faithful functor from Tate's rigid analytic varieties to adic spaces sending $\Sp(R)$ to $\Spa(R,R^\circ)$.
Its essential image is (modulo some quasi-separatedness issues) given by the rigid analytic varieties defined above (see \cite{Hu94}, Proposition~4.4).

\section{Formal schemes and generic fibers} \label{formal_schemes_generic_fibers}

Let~$A$ be a noetherian ring endowed with the $I$-adic topology for a (finitely generated) ideal~$I$.
In particular, this is a Huber ring which is its own ring of definition.
Its \emph{formal spectrum} $\cX: =\Spf A$ is the set of all open prime ideals endowed with the Zariski topology.
So as a topological space $\cX$ is isomorphic to $\Spec A/I$.
What makes the difference is the structure sheaf~$\cO_{\cX}$:
For $f \notin \sqrt{I}$ its sections over the corresponding fundamental open $\Spf A_f \subseteq \cX$ are
\[
 \cO_{\cX}(\Spf A_f) = \widehat{A_f},
\]
where the completion is the $I$-adic completion.
In particular, $\cO_{\cX}(\cX) = \widehat{A}$.
Formal spectra $(\cX,\cO_{\cX})$ as above can be glued to obtain \emph{locally noetherian formal schemes}.
For details see \cite{FGA2}.

\begin{theorem}[\cite{Hu94}, Proposition~4.2]
 There is a fully faithful functor from the category of locally noetherian formal schemes to the category of adic spaces.
 It maps $\Spf A$ to $\Spa(A) = \Spa(A,A)$.
\end{theorem}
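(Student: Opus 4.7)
The plan is to build the functor in stages: define it on affine formal schemes, check full faithfulness in the affine case, and then extend by gluing. On objects in the affine case, send $\Spf A$ (with $A$ an $I$-adic noetherian ring) to $\Spa(A,A)$. This is a well-defined affinoid adic space because $A$ serves as its own noetherian ring of definition, so by Theorem~\ref{sheafy_results}(ii) the Huber pair $(A,A)$ is sheafy and the structure sheaf is acyclic. On morphisms, an adic continuous ring homomorphism $\varphi:A\to B$, which is what defines a morphism $\Spf B\to \Spf A$, gives a morphism of Huber pairs $(A,A)\to (B,B)$ (the condition on integral elements is trivial since $A^+=A$ and $B^+=B$), and hence a morphism $\Spa(B,B)\to \Spa(A,A)$ of affinoid adic spaces.

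For full faithfulness in the affine case, morphisms in both categories are in natural bijection with adic continuous ring homomorphisms $A\to B$. On the formal scheme side this is by definition. On the adic space side, a morphism $\Spa(B,B)\to \Spa(A,A)$ of locally $v$-ringed spaces is determined by its map on global sections $A=\cO(\Spa(A,A))\to B=\cO(\Spa(B,B))$, which is continuous because the structure sheaf carries its natural topology; since $A$ and $B$ are their own rings of definition, such a map is automatically adic (the image of the given ideal of definition of $A$ generates an open, hence ideal-of-definition, ideal in $B$). So the two Hom-sets agree.

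To extend to non-affine locally noetherian formal schemes, I would use the gluing Lemma~\ref{glue_adic_spaces}. The key compatibility is that for a basic open $\Spf \widehat{A_f}\hookrightarrow \Spf A$, the functor produces an open immersion $\Spa(\widehat{A_f},\widehat{A_f})\hookrightarrow \Spa(A,A)$ whose image is the open subset $U_f:=\{x\in \Spa(A,A):|f(x)|\neq 0\}$. By Exercise~\ref{generalized_rational}, the set $U_f$ is open, being the union over $r$ of the rational subsets $R(\{f\}\cup E_r/f)$, where $E_r$ is a finite set of generators of $I^r$. By Lemma~\ref{rational_subset_spectrum} applied to each of these rational subsets and passage to the colimit, the adic space $\Spa(\widehat{A_f},\widehat{A_f})$ is identified with $U_f$ including its structure sheaf. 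Given an affine cover of a formal scheme $\cX$ by $\Spf A_i$ with intersections covered by such basic opens, the cocycle data transports directly to cocycle data for adic spaces and produces the image adic space $X_\cX$.

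The main obstacle is the identification at the end of the third paragraph: showing that $\Spa(\widehat{A_f},\widehat{A_f})$ really is isomorphic, as an adic space (not just a topological space), to the generalized rational open $U_f\subseteq \Spa(A,A)$, including the matching of the rings of integral elements and of topologies under completion. This rests on the fact that the $I$-adic completion of $A_f$ (taken with ring of definition $A_0[1/f]$ or a similar localization) agrees with the limit of the sections $\cO(R(\{f\}\cup E_r/f))$ over $r$, which is where the noetherian hypothesis is essential. Once this is in hand, global full faithfulness reduces to the affine case: a morphism of adic spaces $X_\cY\to X_\cX$ restricts to morphisms on any affine cover, which correspond uniquely to ring maps glued from the affine case, producing the required morphism $\cY\to \cX$ of formal schemes.
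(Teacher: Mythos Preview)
The paper does not prove this theorem; it merely cites \cite{Hu94}, Proposition~4.2, and then discusses the relation between $\cX$ and $\cX^{\ad}$ via the maps $\iota_\cX$ and $r_\cX$. So there is no ``paper's own proof'' to compare against. That said, your outline is the natural one and is close to how Huber argues, but there is a genuine error in your affine full faithfulness step.

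You claim that a continuous ring homomorphism $A\to B$ between noetherian adic rings (each its own ring of definition) is automatically adic, because ``the image of the given ideal of definition of $A$ generates an open, hence ideal-of-definition, ideal in $B$.'' This is false. The paper itself supplies the counterexample in Example~\ref{ZpT}: the inclusion $\Z_p\to \Z_p\llbracket T\rrbracket$ is continuous (for the $(p,T)$-adic topology on the target) but not adic, since the ideal $(p)\subset \Z_p\llbracket T\rrbracket$ is not open --- no power of $(p,T)$ is contained in $(p)$, as $T^n\notin (p)$. Relatedly, your description of morphisms of affine formal schemes as ``adic continuous ring homomorphisms'' is wrong: morphisms $\Spf B\to \Spf A$ correspond to \emph{continuous} ring homomorphisms $A\to B$, not necessarily adic ones (again, $\Spf \Z_p\llbracket T\rrbracket\to \Spf \Z_p$ is a perfectly good morphism of formal schemes).

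Fortunately the two errors cancel: both Hom-sets are the set of continuous ring homomorphisms $\hat A\to \hat B$, and the bijection you want does hold. But the argument you give for the adic-space side is incomplete: that a morphism $\Spa(B,B)\to \Spa(A,A)$ is determined by, and arises from, its map on global sections is a nontrivial theorem (this is \cite{Hu94}, Proposition~2.1), not something you can read off from the definition of locally $v$-ringed space. You should invoke it rather than assert it. Once you correct the characterization of morphisms on both sides to ``continuous'' and cite the representability of $\Spa$ on complete Huber pairs, the affine step goes through, and your gluing sketch is then reasonable.
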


By the above theorem we can consider formal schemes as adic spaces.
More precisely we can identify them as the locally noetherian adic spaces~$X$ that can be covered by affinoid opens of the form $\Spa(A,A)$.

For a formal scheme~$\cX$ we write $\cX^{\ad}$ for the adic space associated to it according to the theorem.
Let us try to understand the relation between~$\cX$ and~$\cX^{\ad}$.
We will construct functorial morphisms of topologically locally ringed spaces
\[
 (\cX,\cO_{\cX}) \overset{\iota_\cX}{\longrightarrow} (\cX^{\ad},\cO_{\cX^{\ad}}^+) \overset{r_\cX}{\longrightarrow} (\cX,\cO_\cX)
\]
such that $r_\cX \circ \iota_\cX$ is the identity.
It is enough to give the construction in the affinoid case, i.e., where $\cX = \Spf(A)$.
Then~$\iota_\cX$ maps an open prime ideal $\p \subseteq A$ to the trivial valuation of $k(\p)$.

\begin{exercise} \label{preimage_iota}
 For a rational open subset $\Spa(B,B^+) \subseteq \cX^{\ad}$ we have
 \[
  \iota_\cX^{-1}(\Spa(B,B^+)) = \Spf B,
 \]
 where~$B$ is equipped with the $IB$-adic topology for an ideal of definition $I \subseteq B^+$.
 In particular, the preimage is empty if $IB = B$ is invertible in~$B$.
\end{exercise}

The exercise shows that~$\iota_\cX$ is continuous.
We now want to define the homomorphism of sheaves $\cO_{\cX^{\ad}}^+ \to (\iota_{\cX})_*\cO_\cX$.
Using Exercise~\ref{preimage_iota} this comes down to specifying for every open affinoid $\Spa(B,B^+)$ of~$\cX^{\ad}$ a homomorphism
\[
 B^+ = \cO_{\cX^{\ad}}^+(\Spa(B,B^+)) \longrightarrow \cO_\cX(\iota_\cX^{-1}(\Spa(B,B^+)) = \cO_\cX(\Spf B) = B.
\]
It is easy to check that by taking this homomorphism to be the natural inclusion, $\iota_\cX$ becomes a morphism of topologically locally ringed spaces.

In order to define the map~$r_\cX$ we consider a point $x \in \cX^{\ad}$, which defines a valuation ring $k(x)^+$ of $k(x)$.
Since $|A(x)| \le 1$, we have a natural map $A \to k(x)^+$ leading to
\[
 \Spec k(x)^+ \longrightarrow \Spec A.
\]
Then $r_\cX(x)$ is the image of the closed point of $\Spec k(x)^+$, also called the \emph{center} of~$x$.
Note that the continuity of~$x$ ensures that $r_\cX(x)$ is indeed an open prime ideal of~$A$.
Now it is not hard to check that for an affine open $\Spf B \subseteq \Spf A$ we have
\[
 r_\cX^{-1}(\Spf B) = \Spa(B,B).
\]
We define the sheaf homomorphism $\cO_\cX \to (r_\cX)_*\cO_{\cX^{\ad}}$ by taking over an open affinoid $\Spf B$ the identity of~$B$.
This defines~$r_\cX$ and obviously we have $r_\cX \circ \iota_\cX = \id$.

The existence of~$r_\cX$ and~$\iota_\cX$ suggests that $\cX$ should be thought of as a deformation retract of~$\cX^{\ad}$.
Although~$\cX^{\ad}$ has many more points, it carries more or less the same information as $\cX$.

Let $(k,k^\circ)$ be an analytic field.
We consider the special case of formal schemes over~$k^\circ$.
More precisely, we study the category of formal schemes that are locally of the form $\Spa(A,A)$ for a $k^\circ$-algebra $A$ of formally finite type.
For a noetherian ring~$A$ with the $I$-adic topology for some ideal $I \subseteq A$ the standard example of an $A$-algebra of formally finite type is
 \[
  \hat{A} \llbracket T_1,\ldots,T_n\rrbracket \langle X_1,\ldots,X_m \rangle
 \]
for $m, n \in \N \cup \{0\}$.
We endow this ring with the $J$-adic topology, where~$J$ is the ideal generated by~$I$ and $T_1,\ldots,T_n$.

\begin{exercise}
 Let~$A$ be a noetherian ring with an ideal $I \subseteq A$ and $m,n \in \N \cup \{0\}$.
 We consider the polynomial ring
 \[
  B := A[T_1,\ldots,T_n,X_1,\ldots,X_m]
 \]
 endowed with the $J$-adic topology, where~$J$ is the ideal generated by~$I$ and $T_1,\ldots,T_n$.
 Then
 \[
  \hat{B} = \hat{A} \llbracket T_1,\ldots,T_n\rrbracket \langle X_1,\ldots,X_m \rangle.
 \]
\end{exercise}

Note that $A \to B$ is adic if and only if $n=0$.
This in turn is the case if and only if~$B$ is of topologically finite type over~$A$.
Using the standard building blocks $\hat{A} \llbracket T \rrbracket \langle X \rangle$ we can now define $A$-algebras of formally finite type.

\begin{definition}
 A morphism $f : \cX \to \cY$ of locally noetherian formal schemes is of \emph{formally finite type} if it is locally on~$\cX$ and~$\cY$ given by a morphism of affinoids
 \[
  \Spa(B,B) \longrightarrow \Spa(A,A)
 \]
 induced by a ring homomorphism $A \to B$, such that there is a quotient map
 \[
  A \llbracket T_1,\ldots,T_n\rrbracket \langle X_1,\ldots,X_m \rangle \twoheadrightarrow \hat{B}
 \]
 of $\hat{A}$-algebras.
\end{definition}

In this setting an important construction is the (rigid analytic) generic fiber.
Classically, one would do this explicitly on affine opens and then check that the construction glues.
In the language of adic spaces the definition is straight forward:

\begin{definition}
 Let $(k,k^\circ)$ be an analytic field with pseudouniformizer $\varpi \in k^\circ$.
 Let $\cX$ be a formal scheme of formally finite type over $k^\circ$ considered as an adic space over $\Spa(k^\circ,k^\circ)$.
 The generic fiber of~$\cX$ is defined as the open subspace
 \[
  \cX_\eta = \{ x \in \cX | |\varpi(x)| \ne 0\}.
 \]
 If~$\cX$ is a (classical) formal scheme we write~$\cX_\eta$ for $(\cX^{\ad})_\eta$.
\end{definition}

Note that $\Spa(k^\circ,k^\circ)$ has two points.
The generic point is open and cut out precisely by the condition $|\varpi| \ne 0$.
Hence we see that~$\cX_\eta$ is nothing but the preimage in~$\cX$ of the generic point of $\Spa(k^\circ,k^\circ)$.

If $\cX$ is even of topologically finite type over~$k^\circ$, the inclusion of the generic fiber is quasi-compact.
In order to see this we may assume~$\cX$ is affinoid, i.e., $\cX = \Spa(A,A)$ for a $\varpi$-adic ring~$A$.
Then the ideal of~$A$ generated by~$\varpi$ is open, and thus
\[
 \cX_\eta = R \left(\frac{\varpi}{\varpi}\right)
\]
is a rational subset.

\begin{example} \label{generic_fiber_closed_disc}
 We consider the polynomial ring $k^\circ[T]$ over the valuation ring~$k^\circ$ of a nonarchimedean field~$k$ with the induced topology from~$k$.
 This is the $\varpi$-adic topology for some pseudouniformizer $\varpi \in k^\circ$.
 The completion of $k^\circ[T]$ is $k^\circ \langle T \rangle$.
 We want to determine the generic fiber of
 \[
  \cX := \Spa(k^\circ \langle T \rangle).
 \]
 By definition it is the rational subset
 \[
  R\left(\frac{\varpi}{\varpi}\right) = \{x \in \cX \mid |\varpi(x)| \ne 0\}.
 \]
 We obtain the corresponding Huber pair by inverting~$\varpi$ in the first component and adjoining $\varpi/\varpi=1$ in the second component (so nothing happens there):
 \[
  (k^\circ \langle T \rangle [\frac{1}{\varpi}],k^\circ \langle T \rangle[\frac{\varpi}{\varpi}]) = (k \langle T \rangle,k^\circ \langle T \rangle).
 \]
 The adic spectrum of this Huber pair is the closed unit disc $\DD_k(0,1)$.
\end{example}

\begin{example} \label{ZpT_generic_fiber}
 In Example~\ref{ZpT_analytic_locus} we have already computed the generic fiber of $\cX = \Spa(\Z_p\llbracket T \rrbracket)$:
 \[
  \cX_\eta = \bigcup_{n =1}^\infty R\left(\frac{T^n}{p}\right) = \Dcirc_{\Q_p}(0,1).
 \]
 Here one can see that $\cX_\eta$ is not quasi-compact due to the fact that $\Z_p \llbracket T \rrbracket$ is not of topologically finite type over~$\Z_p$.
\end{example}

Starting with a rigid analytic variety~$X$ over $(k,k^\circ)$ (i.e., an adic space locally of finite type over~$k$) we can always construct a formal scheme~$\cX$ over $\Spa(k^\circ,k^\circ)$ with $\cX_\eta = X$.
We call such a space~$\cX$ a \emph{formal model} of~$X$.
Formal models are almost never unique (only if~$X$ is quasi-finite over~$k$).
We can construct new formal models from an existing one - let us call it~$\cX$ -  by performing an \emph{admissible blowup}.
This is a blowup of~$\cX$ with respect to a sheaf of ideals~$\cI$ such that~$\varpi \in \cI$.
It turns out that admissible blowups are the only thing that stand in the way of uniqueness of formal models.

\begin{theorem}[Raynaud] \label{formal_model_equivalence}
 There is an equivalence of categories
 \[
  \left\{\substack{\text{qc $\varpi$-torsion free formal $k^\circ$-schemes of tft} \\ \text{localized by admissible formal blowups}}\right\}  \overset{\sim}{\longrightarrow} \big\{\substack{\text{qcqs rigid} \\ \text{$k$-spaces}}\big\}.
 \]
 induced by mapping a formal scheme~$\cX$ to its generic fiber~$\cX_\eta$.
\end{theorem}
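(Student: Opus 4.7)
The plan is to prove the three hallmarks of an equivalence of categories separately: well-definedness of the localized functor, essential surjectivity, and full faithfulness. I will freely use the construction $\cX \mapsto \cX_\eta$ from the preceding section and the explicit local description of admissible blowups.

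First I would check that $\cX \mapsto \cX_\eta$ factors through the localization at admissible blowups. If $\pi : \tilde{\cX} \to \cX$ is an admissible blowup with center $\cI \ni \varpi$, then locally on an affine $\Spf A \subseteq \cX$ with $\cI|_{\Spf A} = (a_0,\ldots,a_r)$ (where we may take $a_0 = \varpi$), $\tilde{\cX}$ is covered by the formal affines $\Spf A_i^\wedge$ with $A_i = A[a_j/a_i]_{j \ne i}/(\varpi\text{-torsion})$. Passing to generic fibers, $\varpi$ becomes a unit, hence so does each $a_i/a_0$, so $\cI$ generates the unit ideal after inverting $\varpi$ and the map $(\pi)_\eta : \tilde{\cX}_\eta \to \cX_\eta$ is an isomorphism by Lemma~\ref{rational_subset_spectrum}. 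Thus the functor descends to the localized category.

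Next I would tackle essential surjectivity. Given a qcqs rigid $k$-space $X$, cover it by finitely many affinoids $\Spa(A_i, A_i^\circ)$ with $A_i = k\langle T_1,\ldots,T_{n_i}\rangle/J_i$. Each such affinoid admits an obvious formal model: pick generators of $J_i$ with $k^\circ$-coefficients and set $\cX_i := \Spf\bigl(k^\circ\langle T_1,\ldots,T_{n_i}\rangle/J_i^\circ\bigr)^{\varpi\text{-tf}}$, which by Example~\ref{generic_fiber_closed_disc} (and its generalization) has generic fiber $\Spa(A_i,A_i^\circ)$. The pairwise intersections $X_{ij} = \Spa(A_i,A_i^\circ) \cap \Spa(A_j,A_j^\circ)$ are quasi-compact rigid opens, and one shows (using Lemma~\ref{Laurent_cover} to reduce to rational subsets) that after possibly performing an admissible blowup on each $\cX_i$, the rational subset $X_{ij}$ extends to a formal open $\cX_{ij} \subseteq \cX_i$, with compatible isomorphisms $\cX_{ij} \simeq \cX_{ji}$. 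The cocycle condition likewise holds up to further admissible blowup. By Lemma~\ref{glue_adic_spaces}'s formal-scheme analogue, this glues to a formal $k^\circ$-scheme $\cX$ of topologically finite type with $\cX_\eta = X$.

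Full faithfulness is the technical heart and the step I expect to be the main obstacle. Given two formal models $\cX, \cY$ of $X$ and a morphism $f : \cX_\eta \to \cY_\eta$, one must construct a common admissible blowup $\tilde{\cX} \to \cX$ together with a formal morphism $\tilde{\cX} \to \cY$ extending $f$; uniqueness in the localized category is then formal. The key inputs are: (a) the graph $\Gamma_f \subseteq \cX_\eta \times_k \cY_\eta$ is a closed rigid subspace whose scheme-theoretic closure in $\cX \times_{k^\circ} \cY$ yields a candidate correspondence, and (b) Raynaud's flattening technique, which guarantees that after a suitable admissible blowup of $\cX$ the projection from this closure becomes an isomorphism onto $\cX$; this uses $\varpi$-torsion-freeness crucially to control the closure and is the place where all the hard commutative-algebra work concentrates. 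To show that two choices of extension agree up to further admissible blowup, one compares them via the graph of the difference and reapplies the flattening argument. Assembling these three steps yields the equivalence.
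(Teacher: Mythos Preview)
The paper does not prove this theorem: it is stated as Raynaud's result, and the reader is directed to \cite{BosLuetI} and \cite{FujKat06} for the formal-model approach. So there is no argument in the paper to compare your proposal against.

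For what it is worth, your three-step outline (invariance of the generic fiber under admissible blowup, essential surjectivity by choosing local models and gluing after further blowups, full faithfulness via the graph and a flattening argument) is the standard strategy one finds in those references. A couple of small comments: in the first step the invocation of Lemma~\ref{rational_subset_spectrum} is appropriate, but you should make explicit that the blowup charts $\Spf A_i^\wedge$ have generic fibers equal to the rational subsets $R(a_0,\ldots,a_r/a_i)$ of $\cX_\eta$, and that these \emph{cover} $\cX_\eta$ because $\varpi \in \{a_0,\ldots,a_r\}$ forces the ideal to be open. In the essential-surjectivity step you pass quickly over a genuinely hard lemma---that any finite rational covering of an affinoid generic fiber is induced by an admissible blowup of a chosen formal model---which is where most of the work in \cite{BosLuetI} actually lives.
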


The theorem lies the foundation for the formal model approach to nonarchimedean geometry.
Instead of rigid analytic spaces one can study formal schemes (up to admissible blowup).
This strategy has been adopted by many researchers, see \cite{BosLuetI}, \cite{FujKat06}.

Theorem~\ref{formal_model_equivalence} was originally formulated in the language of formal schemes and Tate's rigid analytic varieties.
In this setting the construction of the rigid analytic generic fiber is not so straight forward and one could say the generic fiber was missing some points.
In the language of adic spaces we can derive from Theorem~\ref{formal_model_equivalence} a more explicit version of the equivalence.
For a formal model~$\cX$ of a rigid analytic space~$X$ we construct the \emph{specialization map} as the composition
\[
 \spm_\cX: X = \cX_\eta \longrightarrow \cX^{\ad} \overset{r_\cX}{\longrightarrow} \cX.
\]
For an admissible blowup $\cX' \to \cX$ it is clear from the construction that the specialization maps $\spm_\cX$ and $\spm_{\cX'}$ are compatible.
We thus obtain a morphism
\[
 \spm : X \longrightarrow \lim_{\cX_\eta=X} \cX,
\]
where the limit runs over all formal models~$\cX$ of~$X$.

\begin{theorem}
 Let~$X$ be a qcqs rigid analytic space over~$k$.
 Then the morphism $\spm$ induces an isomorphism
 \[
  \spm: (X,\cO_X^+) \overset{\sim}{\longrightarrow} \lim_{\cX_\eta = X} (\cX,\cO_{\cX})
 \]

\end{theorem}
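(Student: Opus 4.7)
The plan is to reduce to the affinoid case via Theorem~\ref{formal_model_equivalence} and then translate the statement into a claim about admissible blowups. Since both sides of the asserted isomorphism are local on~$X$ and admissible blowups localize, I would begin by covering~$X$ by affinoids arising from a single chosen formal model~$\cX_0$, thereby reducing to the case $X=\Spa(R,R^+)$ with $\cX_0=\Spf(A_0)$ a formal model where $A_0$ is $\varpi$-torsion free, topologically of finite presentation, $A_0[\tfrac{1}{\varpi}]=R$, and $R^+$ is the integral closure of $A_0$ in~$R$. By Raynaud's equivalence the admissible blowups of~$\cX_0$ form a cofinal system in the indexing category of the limit, so it suffices to work with them.

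The central technical input would be the realization of every rational subset $U\subseteq X$ as $U=\spm_{\cX}^{-1}(V)$ for some admissible blowup $\cX\to\cX_0$ and some quasi-compact open $V\subseteq\cX$. Concretely, for $U=R\left(\frac{f_1,\ldots,f_n}{g}\right)$, I would first clear denominators so that $f_i,g\in A_0$, then adjoin a suitable power of~$\varpi$ to the generating set so that it generates an open ideal of~$A_0$, and finally take the admissible blowup of that ideal; the affine chart where~$g$ generates the pulled-back ideal is then the desired~$V$. With this in hand the topological map $\spm\colon X\to\lim_\cX\cX$ becomes a homeomorphism: injectivity is clear since distinct points of~$X$ are separated by some rational subset, and hence by an open in some formal model; surjectivity follows by assembling a compatible system of centers~$(x_\cX)$ into a local subring of~$R$ that is the preimage of a valuation ring in the residue field, which in turn corresponds to a unique point of~$X$. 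For the structure sheaves the same realization gives $\cO_X^+(U)$ as the integral closure in $\cO_X(U)$ of the image of $\cO_\cX(V)$, which coincides with $\lim_{\cX'}\cO_{\cX'}(V_{\cX'})$ taken over further admissible blowups dominating~$\cX$.

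The hardest part will be the realization of rational subsets as preimages under admissible blowups, which is essentially Raynaud's construction of formal models for rational subdomains. This requires careful algebraic bookkeeping: ensuring that the relevant ideal of~$A_0$ is open, that it becomes invertible after blowing up, and that one can pass to the $\varpi$-torsion free strict transform without altering the generic fiber. A secondary subtlety is the gluing of a compatible system~$(x_\cX)$ of centers into a single continuous valuation on~$R$: the local rings $\cO_{\cX,x_\cX}$ and the valuations on their residue fields must be shown to fit together to define a valuation of~$R$, and continuity of the resulting valuation is inherited from the continuity of each~$x_\cX$ combined with the fact that admissible blowups preserve generic fibers.
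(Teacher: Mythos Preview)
Your approach is correct but differs in emphasis from the paper's. You organize the argument around Raynaud's realization of rational subsets: every rational $U\subseteq X$ arises as $\spm_\cX^{-1}(V)$ for a suitable admissible blowup, and this drives both the homeomorphism statement and the sheaf comparison. The paper instead isolates a single algebraic fact as the crucial point: for a compatible system $x=(x_\cX)\in\lim_\cX\cX$, the $\varpi$-adic completion of $\colim_\cX\cO_{\cX,x_\cX}$ is a microbial valuation ring with pseudouniformizer~$\varpi$ (citing \cite{Bhatt17}, Proposition~8.1.3). From this one directly obtains a continuous analytic valuation on any affine chart, hence a point of $X=\cX_\eta$, and this furnishes the inverse to~$\spm$ in one stroke.

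What each approach buys: yours is more self-contained and makes the connection with Raynaud's theory explicit, at the cost of the bookkeeping you flag (openness of ideals, strict transforms, compatibility of the assembled valuation). The paper's route is shorter and conceptually cleaner---everything reduces to a single stalkwise statement---but it outsources that statement to an external reference. Note also that your surjectivity sketch (``assembling a compatible system of centers into a local subring of~$R$ that is the preimage of a valuation ring'') is precisely the content of the paper's key lemma; you have simply not isolated it as the main difficulty, whereas the paper does. If you pursue your route, be aware that the sheaf-level identification $\cO_X^+(U)=\colim_{\cX'}\cO_{\cX'}(V_{\cX'})$ (a colimit, not a limit, on the sheaf side) still requires knowing that the colimit is integrally closed in $\cO_X(U)$, which again comes down to the valuation-ring property of the stalks.
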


\begin{proof}
 For an element~$x$ of $\lim \cX$ we denote by $\cO_{\cX,x}$ the local ring of the formal model~$\cX$ at the image point of~$x$.
 The crucial point is to show that for each such~$x$ the $\varpi$-adic completion of
 \[
  \colim_{\cX} \cO_{\cX,x}
 \]
 is a microbial valuation ring with pseudouniformizer~$\varpi$.
 This is shown in \cite{Bhatt17}, Proposition~8.1.3.
 We call this valuation ring~$R_x$.
 For every formal model and open affine neighborhood $\Spf B$ of~$x$ we have a homomorphism
 \[
  B \longrightarrow R_x
 \]
 defining a valuation~$v_x$ of~$B$.
 Since~$R_x$ is microbial with pseudouniformizer~$\varpi$, this valuation is continuous and analytic.
 Hence it defines a point of the generic fiber $\cX_\eta = X$.
 Finally one needs to check that this construction gives an inverse to~$\spm$.
\end{proof}

\section{The fiber product of adic spaces}

Let $Y \to X$ and $Z \to X$ be morphisms of adic spaces.
The \emph{fiber product} of these two morphisms is an adic space $Y\times_X Z$ fitting into a diagram
\[
 \begin{tikzcd}
  Y \times_X Z	\ar[r]	\ar[d]	& Y	\ar[d]	\\
  Z				\ar[r]			& X
 \end{tikzcd}
\]
satisfying the usual universal property:
For every solid arrow diagram below there is a unique dotted arrow making the diagram commute:
\[
 \begin{tikzcd}
  S	\ar[drr,bend left]	\ar[dr,dashed]	\ar[ddr,bend right]	& [-25pt]	\\[-12pt]
															& Y_X Z		\ar[r]	\ar[d]	& Y	\ar[d]	\\
															& Z			\ar[r]			& X
 \end{tikzcd}
\]
For topological issues fiber products do not exist in general.
We need to impose some finiteness assumptions on $Y \to X$ and $Z \to X$.
Moreover we have to make sure the sheaf condition for the structure presheaf is satisfied on the fiber product.
As in the case of schemes the construction of the fiber product involves taking tensor products of the corresponding rings.
However, it is a bit more complicated and the tensor product of affinoids need not be affinoid but rather an ascending union of affinoids.

\begin{definition}
 A Huber ring~$A$ is \emph{stably sheafy} if for every $A$-algebra~$B$ of topologically finite type and every ring of integral elements $B^+ \subseteq B$ the Huber pair $(B,B^+)$ is sheafy.
 An adic space is \emph{stable} if it is locally the spectrum of stably sheafy Huber pairs.
\end{definition}

The classes of sheafy Huber pairs we have treated so far - those listed in Theorem~\ref{sheafy_results} - are all stably sheafy because their defining property is preserved when passing to an adic space of topologically finite type over them.
Hence, all our familiar adic spaces are stable.

\begin{theorem} \label{fiber_product_existence}
 Let~$X$ be a stable adic space and $f:Y \to X$ and $g:Z \to X$ morphisms of stable adic spaces satisfying one of the following conditions
 \begin{enumerate}[(i)]
  \item $X$, $Y$, and~$Z$ are perfectoid,
  \item $f$ is locally of weakly finite type and $g$ is adic.
  \item $f$ is locally of finite type,
 \end{enumerate}
 Then the fiber product $Y \times_X Z$ exists in the category of adic spaces and is a stable adic space.
 In case~(i) $Y \times_X Z$ is perfectoid.
\end{theorem}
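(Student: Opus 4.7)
The plan is to reduce the construction to the affinoid level and then glue using Lemma~\ref{glue_adic_spaces}. More precisely, I would cover $X$ by affinoid opens $X_i = \Spa(A_i,A_i^+)$, cover each preimage $f^{-1}(X_i)$ by affinoid opens $Y_{ij} = \Spa(B_{ij},B_{ij}^+)$ and each $g^{-1}(X_i)$ by affinoid opens $Z_{ik} = \Spa(C_{ik},C_{ik}^+)$. If each $Y_{ij} \times_{X_i} Z_{ik}$ exists as a stable affinoid adic space, then these pieces are visibly compatible on overlaps (the overlaps of affinoids are themselves quasicompact unions of rational subsets, and rational localisation commutes with the construction), so the cocycle condition of Lemma~\ref{glue_adic_spaces} is automatic and we obtain $Y \times_X Z$ globally. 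The universal property is then checked locally.

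In the affinoid situation with $(A,A^+) \to (B,B^+)$ and $(A,A^+) \to (C,C^+)$, I would construct the target Huber pair as follows. Form the algebraic tensor product $D_0 := B \otimes_A C$, let $D_0^+$ be the integral closure of the image of $B^+ \otimes_{A^+} C^+$ in $D_0$, and equip $D_0$ with the unique ring topology in which the subgroups $U_B \otimes_A C + B \otimes_A U_C$ (for $U_B, U_C$ running over open neighbourhoods of zero in $B$, $C$) form a fundamental system of open neighbourhoods of zero. In case~(iii), if $B = A\langle T\rangle_M/I$ is a quotient of a weighted Tate algebra, this is cleaner: set $D = (C\langle T\rangle_{M'}/I\cdot C\langle T\rangle_{M'})^{\wedge}$, where $M'$ is the image of $M$ in $C$, which remains voluminous because $M_i^rA$ being open in $A$ and adicness (or the stronger finite type condition) ensure $M_i'{}^r C$ is open in $C$. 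Case~(ii) is the same except that $g$ adic is used precisely to guarantee that the image tuple $M'$ is voluminous. Completing gives $(D,D^+) := (\hat{D},\hat{D}^+)$, and one checks by the universal property of completion and of tensor product (together with the identification $\Spa(\hat{\cdot}) = \Spa(\cdot)$ from Lemma~\ref{spectrum_completion}) that $\Spa(D,D^+)$ represents the fiber product functor in the category of locally $v$-ringed spaces that are adic spaces.

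The remaining point is sheafiness and stability. In cases~(ii) and~(iii), the pair $(D,D^+)$ is of topologically finite type over the stable Huber pair $(C,C^+)$, hence is sheafy by the stability hypothesis, and the resulting adic space is stable because any topologically finite-type $D$-algebra is also topologically finite type over $C$. In case~(i), after reducing to the affinoid perfectoid setting, I would invoke the perfectoid analogue of the completed tensor product: $B\hat{\otimes}_A C$ is again perfectoid, which then yields stability and the perfectoid conclusion via Proposition~\ref{perfectoid_uniform} and the corollary following it.

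The main obstacle is case~(i). Unlike cases~(ii) and~(iii), where one has an explicit presentation by generators and relations that is manifestly preserved by base change along $g$, perfectoid-ness of the completed tensor product is not a formal consequence of anything stated earlier in the excerpt; it requires the machinery of tilting and the equivalence between characteristic $0$ and characteristic $p$ perfectoid algebras, which must be imported as a black box. Secondary difficulties lie in checking that the tensor product topology is well-behaved (in particular that one really does obtain a Huber ring with ring of definition $B_0 \otimes_{A_0} C_0$, completed appropriately) and that rational localisation on the factors corresponds to rational localisation on the product, so that the gluing in the first paragraph is unambiguous.
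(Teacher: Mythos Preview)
Your treatment of cases~(i) and~(ii) is essentially the paper's approach: reduce to affinoids, form the algebraic tensor product with the obvious ring of definition and ring of integral elements, complete, and check the universal property; sheafiness comes from stability (or perfectoidness, which indeed has to be imported as a black box).

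The genuine gap is in case~(iii). You write that $M'$, the image of $M$ in $C$, ``remains voluminous because $M_i^rA$ being open in $A$ and adicness (or the stronger finite type condition) ensure $M_i'^rC$ is open in $C$.'' But in case~(iii) there is \emph{no} hypothesis on $g$ at all: $g$ need not be adic, and the finite type hypothesis is on $f$, not $g$. So the preimage of an open ideal of $A$ need not be open in $C$, and $M'$ is typically \emph{not} voluminous. Consequently the weighted Tate algebra $C\langle T\rangle_{M'}$ is not even defined, and your affinoid candidate for the fiber product does not exist.

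This is not a cosmetic issue: in case~(iii) the fiber product is in general \emph{not} affinoid. The paper's construction enlarges each $M_i'$ by throwing in the $i$-th powers $L^i$ of a generating set $L$ for an ideal of definition of the non-adic side; the resulting tuples $M_{L,i}$ are voluminous for every $i$, and the associated weighted Tate algebras (modulo the ideal generated by $I$) give an ascending chain of affinoids whose union is the fiber product. Example~\ref{example_generic_fiber} and Example~\ref{ZpT_generic_fiber} illustrate exactly this phenomenon: the generic fiber of $\Spa(\Z_p\llbracket T\rrbracket)$ over $\Spa(\Z_p)$ is the open unit disc, an infinite ascending union of closed discs, not a single affinoid. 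You have also inverted the difficulty ranking: case~(iii) is the delicate one, whereas cases~(i) and~(ii) share the same (easy) construction because both morphisms are adic.
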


\begin{proof}[Idea of proof]
 As for the fiber product of schemes we reduce to the case of affinoid adic spaces:
 \[
  X = \Spa(A,A^+), \qquad Y = \Spa(B,B^+), \qquad Z = \Spa(C,C^+).
 \]
 Moreover, we assume that~$f$ and~$g$ come from homomorphisms of Huber pairs:
 \[
  \begin{tikzcd}
			& (B,B^+)	\\
   (C,C^+)	& (A,A^+)	\ar[u,"\varphi_B"']	\ar[l,"\varphi_C"].
  \end{tikzcd}
 \]
 Case~(i) and~(ii) are easier because all morphisms are adic:
 Perfectoid spaces are analytic and all morphisms of analytic adic spaces are adic (Lemma~\ref{analytic_adic}).
 In case~(ii) $\varphi_B: A \to B$ is topologically of finite type, hence adic (Proposition~\ref{tft_adic}) and $A \to C$ is adic by assumption.
 
 So there are rings of definition $A_0 \subseteq A$, $B_0 \subseteq B$, and $C_0 \subseteq C$ with
 \[
  \varphi_B(A_0) \subseteq B_0, \qquad \varphi_C(A_0) \subseteq C_0
 \]
 such that for an ideal of definition $I_A \subseteq A_0$ the ideals
 \[
  \varphi_B(I_A)B_0 \subseteq B_0, \qquad \varphi_C(I_A)C_0 \subseteq C_0
 \]
 are ideals of definition.
 We set
 \[
  D = B \otimes_A C.
 \]
 We define its ring of definition~$D_0$ as the image of $B_0 \otimes_{A_0} C_0$ in~$D$ and let $D^+$ be the integral closure of $B^+ \otimes_{A^+} C^+$ in~$D$.
 Then $(D,D^+)$ is a Huber pair whose ideal of definition is generated by the image of~$I_A$ in~$D_0$.
 
 The additional assumptions are needed to ensure that $(D,D^+)$ is sheafy.
 In the first case one shows that $(D,D^+)$ is perfectoid, so it is sheafy.
 In the second case we use that $C \to D$ is a basechange of $A \to B$, which is topologically of finite type.
 Since $Y = \Spa(C,C^+)$ is stable, this ensures that $(D,D^+)$ is sheafy.
 
 It then turns out that
 \[
  \Spa(D,D^+) = Y \times_X Z.
 \]
 For details see \cite{Hu96}, 1.2.2.
 
 Let us now treat case~(iii).
 We may assume that
 \[
  (C,C^+) = (A \langle T \rangle_M,A \langle T \rangle_M^+)/I
 \]
 for a voluminous tuple $M = (M_1,\ldots,M_n)$ of finite subsets of~$A$ and a closed ideal $I \subseteq A \langle T \rangle_M$.
 Let~$A_0$ and $B_0$ be rings of definition with $\varphi_B(A_0) \subseteq B_0$.
 Now the problem is that $\varphi_B(M)$ might not be voluminous because $\varphi_B$ might not be adic.
 We can resolve this by adding additional elements.
 Let $L$ be a set of generators for an ideal of definition of $B$.
 For every $i \in \N$ we define the tuple
 \[
  M_{L,i} := (\varphi_B(M_1) \cup L^i,\ldots,\varphi_B(M_n) \cup L^i)
 \]
 of finite subsets of~$B$.
 Since $L$ generates an ideal of definition, the tuples $M_{L,i}$ are voluminous for every $i \in \N$.
 Hence we can consider the weighted Tate algebras
 \[
  B \langle T \rangle_{M_{L,i}}.
 \]
 For every $i > j$ the universal property of the weighted Tate algebra (see \cite{Hu94}, Lemma~3.5~(i)) gives us a canonical morphism
 \[
  B \langle T \rangle_{M_{L,i}} \longrightarrow B \langle T \rangle_{M_{L,j}}.
 \]
 Likewise we have compatible homomorphisms
 \[
  A \langle T \rangle_M \longrightarrow B \langle T \rangle_{M_{L,i}}.
 \]
 Let $J_i$ be the closure of the ideal of $B \langle T \rangle_{M_{L,i}}$ generated by~$I$.
 Then we get an injective system of $A$-algebras $B \langle T \rangle_{M_{L,i}}/J_i$ fitting into the diagram
 \[
  \begin{tikzcd}
   {}	\ar[dr,dotted]	\\
						& B \langle T \rangle_{M_{L,2}}/J_2	\ar[dr]	\\
						&											& B \langle T \rangle_{M_{L,1}}/J_1				& B	\ar[l] \ar[ull]	\\
						&											& A \langle T \rangle_M/I			\ar[u]	\ar[uul]	& A	\ar[u]	\ar[l]
  \end{tikzcd}
 \]
 and likewise for the rings of integral elements.
 At the level of adic spectra we obtain a projective system whose transition morphisms turn out to be open immersions.
 Hence we can glue them and one checks that
 \[
  Y \times_X Z = \bigcup_i \Spa(B \langle T \rangle_{M_{L,i}},B \langle T \rangle_{M_{L,i}}^+)/J_i.
 \]
 More details can be found in \cite{Hu96}, Proposition~1.2.2
\end{proof}

\begin{example}
 Let us compute the fiber product of the closed unit disc $\DD_k(0,1)$ with itself over~$k$.
 By Theorem~\ref{fiber_product_existence}~(ii) this is done by just taking the tensor product:
 \[
  (k \langle T \rangle,k^\circ \langle T \rangle) \widehat\otimes_{(k,k^\circ)} (k \langle S \rangle, k^\circ \langle T \rangle) = k \langle T,S \rangle,
 \]
 where the completion is taken for the $\varpi$-adic topology.
 Its adic spectrum is a polydisc (of dimension two):
 \[
  \DD_k(0,1) \times_{\Spa(k,k^\circ)} \DD_k(0,1) = \Spa(k \langle T,S \rangle,k^\circ \langle T,S \rangle).
 \]
\end{example}

\begin{example} \label{example_generic_fiber}
 In Section~\ref{formal_schemes_generic_fibers} we have computed the generic fiber of a formal scheme~$\cX$ over the valuation ring~$k^\circ$ of a nonarchimedean field~$k$.
 This can in fact be expressed as the fiber product of the two morphisms
 \[
  \begin{tikzcd}
							& \cX					\ar[d]	\\
   \Spa(k,k^\circ)	\ar[r]	& \Spa(k^\circ,k^\circ).
  \end{tikzcd}
 \]
 The horizontal morphism is of finite type.
 So by Theorem~\ref{fiber_product_existence}~(iii) the fiber product exists.
 If $X$ is locally of finite type over~$k^\circ$, it can even be computed in the easy way by Theorem~\ref{fiber_product_existence}~(ii) by just taking tensor products.
 For instance, this is the case for $\cX = \Spa(\Z_p \langle T \rangle)$, where we have seen in Example~\ref{generic_fiber_closed_disc} that
 \[
  \cX_\eta =\cX \times_{\Spa(\Z_p,\Z_p)} \Spa(\Q_p,\Z_p) = \DD_{\Q_p}(0,1)
 \]
 is the closed unit disc over~$\Q_p$.
 
 If~$\cX$ is only locally \emph{formally} of finite type over~$\Z_p$, we have to compute the fiber product by taking an ascending union of affinoids.
 This is in line with the example of $\cX = \Spa(\Z_p \llbracket T \rrbracket)$ that we have computed in Example~\ref{ZpT_analytic_locus} and Example~\ref{ZpT_generic_fiber}:
 \[
  \cX_\eta = \cX \times_{\Spa(\Z_p,\Z_p)} \Spa(\Q_p,\Z_p) = \bigcup_{n=1}^{\infty} R \left(\frac{T^n}{p}\right) = \Dcirc_{\Q_p}(0,1).
 \]
\end{example}

We can also compute the special fiber of the formal scheme $\cX/k^\circ$ with this formalism.
Remember that the topologically nilpotent elements~$k^{\circ \circ}$ form the maximal ideal of~$k^\circ$.
We then denote by
\[
 k^\succ := k^\circ/k^{\circ \circ}
\]
the corresponding residue field.
The projection $k^\circ \twoheadrightarrow k^\succ$ is a quotient map and defines a closed immersion
\[
 \Spa(k^\succ,k^\succ) \hookrightarrow \Spa(k^\circ,k^\circ).
\]
The special fiber of~$\cX$ is obtained as the fiber product
\[
 \cX_s := \cX \times_{\Spa(k^\circ,k^\circ)} \Spa(k^\succ,k^\succ).
\]

\begin{example}
 Let us compute the special fiber of $\cX =\Spa(\Z_p \langle T \rangle)$.
 Since all morphisms involved are adic, this is done by taking the tensor product
 \[
  \cX_s = \Spa(\Z_p \langle T \rangle \otimes_{\Z_p} \F_p) = \Spa(\F_p[T]).
 \]
 The topology on $\F_p[T]$ is the discrete topology, so $\cX_s$ is a discretely ringed adic space.
 In fact one can show (see \cite{Tem11}, \S~3.1) that for any field~$F$ (or in fact any base scheme) there is a fully faithful functor from the category of $F$-schemes to the category of discretely ringed adic spaces over $F$ mapping an affine scheme $\Spec A$ to $\Spa(A,F)$. 
 The special fiber~$\cX_s$ is of this kind for $F = \F_p$.
 So we can view it rather as the scheme $\Spec \F_p[T]$.
 
 Let us now take a look at $\cX = \Spa(\Z_p \llbracket T \rrbracket)$.
 According to the construction of the fiber product we first need to write~$\F_p$ as a quotient of some weighted Tate algebra over~$\Z_p$
 But $\F_p$ is a quotient of~$\Z_p$ on the nose without adding any variables.
 This is a degenerate situation where we do not have to add any variables to $\Z_p \llbracket T \rrbracket$ but just need to take the quotient by the ideal generated by~$p$.
 There are no affinoids to glue but we directly get
 \[
  \cX_s = \Spa(\Z_p \llbracket T \rrbracket/(p)) = \Spa(\F_p \llbracket T \rrbracket).
 \]
\end{example}

\section{Analytification} \label{section_analytification}

In complex geometry, starting from a variety $X$ over~$\CC$ we can equip $X(\CC)$ with the topology coming from~$\CC$ to obtain a topological space $X^\an$.
This is known as the analytification of~$X$.

For a variety~$X$ over~$\CC_p$ we want to do a similar construction.
Classically one could turn $X(\CC_p)$ into a rigid analytic variety.
However, from a more modern viewpoint one should treat the analytification as an adic space.
In this setting it looks more like a fiber product.

For an adic space $(X,\cO_X,\{v_x\}_{x \in X})$ we denote by
\[
 \underline{X} := (X,\cO_X)
\]
the underlying locally ringed space.
In other words, we forget the topology of the structure sheaf and we forget the valuations~$v_x$.

\begin{definition}
 Let $Y \to X$ be a morphism of schemes and $\underline{Z} \to X$ a morphism of locally ringed spaces where~$Z$ is an adic space.
 The \textit{fiber product} $Y \times_X Z$ is an adic space together with a morphism $Y \times_X Z \to Z$  whose underlying locally ringed space $\underline{Y \times_X Z}$ fits into a diagram
 \[
  \begin{tikzcd}
   \underline{Y \times_X Z}	\ar[d]	\ar[r]	& Y	\ar[d]	\\
   \underline{Z}					\ar[r]	& X.
  \end{tikzcd}
 \]
 satisfying the following universal property:
 For every adic space~$S$ and morphism $f:S \to Z$ and every solid arrow diagram
\[
 \begin{tikzcd}
  \underline{S}	\ar[drr,bend left]	\ar[dr,dashed,"\underline{g}"]	\ar[ddr,bend right, "\underline{f}"']	& [-25pt]	\\[-12pt]
																						& \underline{Y\times_X Z}		\ar[r]	\ar[d]	& Y	\ar[d]	\\
																						& \underline{Z}			\ar[r]			& X
 \end{tikzcd}
\]
there is a unique morphism of adic spaces $g:S \to Y \times_X Z$ such that the induced morphism~$\underline{g}$ (the dotted arrow) makes the diagram commutative.
\end{definition}

\begin{proposition} \label{analytification}
 Let $Y \to X$ be a morphism of schemes that is locally of finite type,~$Z$ a stable adic space, and $\underline{Z} \to X$ a morphism of locally ringed spaces.
 Then the fiber product $Y \times_X Z$ exists, is a stable adic space, and the projection
 \[
  Y \times_X Z \to Z
 \]
 is locally of finite type.
\end{proposition}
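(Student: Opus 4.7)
The plan is to imitate the strategy of case~(iii) in Theorem~\ref{fiber_product_existence}, with one adic factor replaced by a scheme.

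First I would reduce to a local situation. Cover $X$ by affine opens $\Spec A$; since $Y\to X$ is locally of finite type, each preimage in $Y$ is covered by affine opens $\Spec B$ with $B=A[T_1,\ldots,T_n]/I$; and the preimage of $\Spec A$ in $\underline Z$ is open in $Z$ (morphisms of locally ringed spaces are continuous), so it is covered by affinoid opens $\Spa(C,C^+)$. The restricted map $\Spa(C,C^+)\to\Spec A$ of locally ringed spaces is, by the adjunction between global sections and $\Spec$, the same datum as a ring homomorphism $A\to C=\cO_Z(\Spa(C,C^+))$. Hence it suffices to construct $\Spec B\times_{\Spec A}\Spa(C,C^+)$ and then glue.

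For the polynomial case $B=A[T_1,\ldots,T_n]$, I would define
\[
 \A^n_{(C,C^+)} \; := \; \bigcup_M \Spa\bigl(C\langle T\rangle_M,\,C\langle T\rangle_M^+\bigr),
\]
where $M$ runs over voluminous tuples of finite subsets of $C$, and the transitions are the open immersions induced by refinement. Each piece is of topologically finite type over $(C,C^+)$, hence sheafy by the stability hypothesis on $Z$. For general $B=A[T]/I$, cut out the closed analytic subspace by quotienting each $C\langle T\rangle_M$ by the closure of $IC\langle T\rangle_M$; stability preserves sheafiness, so this is again a stable adic space, and by construction its projection to $Z$ is locally of topologically finite type.

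The heart of the argument is the universal property. Given an adic space $S$ with a morphism $f:S\to Z$ and a compatible morphism of locally ringed spaces $\underline S\to Y$, the latter yields $n$ global sections $t_1,\ldots,t_n\in\cO_S(S)$ satisfying $I(t_1,\ldots,t_n)=0$. I must upgrade this to a unique morphism of adic spaces $g:S\to Y\times_X Z$. Working locally on $S=\Spa(D,D^+)$, this amounts to finding a voluminous tuple $M$ of finite subsets of $C$ so that the substitution $T_i\mapsto t_i$ defines a continuous homomorphism of Huber pairs $(C\langle T\rangle_M,C\langle T\rangle_M^+)\to (D,D^+)$; uniqueness then makes the local pieces glue. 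To produce such an $M$, I would copy the trick used in case~(iii) of Theorem~\ref{fiber_product_existence}: set $M_i=N_i\cup L^{r_i}$ where $N_i$ comes from an ideal of definition of $C$ and $L^{r_i}$ is a sufficiently high power (pulled back along $C\to D$) of generators of an ideal of definition of $D$. The voluminous condition is then forced by $L$, and the large exponents guarantee that weighted power series $\sum a_\alpha t^\alpha$ with $a_\alpha\in M^\alpha U$ converge in $D$ for every open neighbourhood $U$ of $0\in C$, yielding continuity.

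The main obstacle is exactly this last step: a morphism of locally ringed spaces carries no \emph{a priori} topological information about the $t_i$, so the voluminous tuple $M$ must be engineered by hand to absorb their potentially unbounded behaviour. This is why the weighted Tate algebras $C\langle T\rangle_M$, rather than the plain Tate algebras, are indispensable here; at the intuitive level this is the same phenomenon that forces $\A_k^1$ to be a nested union of discs of arbitrarily large radius (Example~\ref{example_affine_line}) instead of a single affinoid.
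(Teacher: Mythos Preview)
Your overall plan matches the paper's proof: reduce to the affine/affinoid situation $X=\Spec A$, $Y=\Spec A[T_1,\ldots,T_n]/I$, $Z=\Spa(C,C^+)$, build the fiber product as an ascending union of spectra of weighted Tate algebras over~$C$ modulo the image of~$I$, and verify the universal property by showing that any test datum factors through one of the pieces. The paper carries this out explicitly only in the Tate case (writing $C\langle\varpi^kT\rangle$ in place of the general $C\langle T\rangle_M$) and refers to Wedhorn for the rest, so your writing out the general case is not a deviation in spirit.

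There is, however, a genuine confusion in your universal-property step. You propose $M_i=N_i\cup L^{r_i}$ with $L$ a set of generators of an ideal of definition of the \emph{test ring}~$D$, ``pulled back along $C\to D$''. This cannot work: $M$ must be a tuple of finite subsets of~$C$, and there is no way to pull elements of~$D$ back through a ring homomorphism $C\to D$. More importantly, the union $\bigcup_M\Spa(C\langle T\rangle_M,\ldots)$ is fixed once and for all before any test object~$D$ appears, so the universal property demands that for each~$D$ you locate a suitable~$M$ \emph{already present} in that union. You have transposed the roles from Theorem~\ref{fiber_product_existence}~(iii): there the set~$L$ lives in the ring~$B$ over which the weighted Tate algebras are formed, and here that ring is~$C$, not~$D$.

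The correct choice is simply $M_i=L^k$ where $L\subseteq C_0$ is a finite set generating an ideal of definition of~$C$ and $k$ is large. This is voluminous because $(L^k)^rC\supseteq J^{kr}$ is open. For the factorization, note that multiplication by~$t_i$ is continuous on~$D$, so $\{d\in D:dt_i\in D_0\}$ is an open neighbourhood of~$0$; its preimage in~$C$ therefore contains~$J^k$ for $k\gg 0$, i.e.\ $\varphi(L^k)t_i\subseteq D_0$. This is exactly Huber's criterion for the substitution $T_i\mapsto t_i$ to extend to a continuous homomorphism $C\langle T\rangle_{(L^k,\ldots,L^k)}\to D$. In the Tate case this specializes to the paper's one-line observation that $\varpi^k t_i\in D^+$ for large~$k$ since $D^+$ is open and $\varpi$ is topologically nilpotent.
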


\begin{proof}
 We may assume~$X$ and~$Y$ are affine,
 \[
  X = \Spec A, \qquad	Y = \Spec A[T_1,\ldots,T_n]/I
 \]
 and~$Z$ is affinoid,
 \[
  Z = \Spa(B,B^+).
 \]
 with a stably sheafy Huber pair $(B,B^+)$.
 For simplicity we assume that~$B$ is a Tate ring with pseudouniformizer~$\varpi$.
 The general construction is not much more complicated and can be found in \cite{Wed19}, Proposition and Definition~8.61.
 For $k \in \NN$ we consider the Tate rings
 \[
  B\langle \varpi^k T_1,\ldots,\varpi^k T_n \rangle.
 \]
 Here $\varpi^k T_i$ are to be considered as variables.
 We chose to write them in this way to make the glueing process below more intuitive.
 In general one has to work with weighted Tate algebras and make the identification
 \[
  B\langle \varpi^k T_1,\ldots,\varpi^k T_n \rangle = B \langle T_1,\ldots,T_n \rangle_{(\varpi^k,\ldots,\varpi^k)}.
 \]
 We have associated rings of integral elements
 \[
  B^+\langle \varpi^k T_1,\ldots,\varpi^k T_n \rangle.
 \]
 For $m \ge k$ we have natural homomorphisms
 \[
  B\langle \varpi^m T_1,\ldots,\varpi^m T_n \rangle \longrightarrow B\langle \varpi^k T_1,\ldots,\varpi^k T_n \rangle
 \]
 sending $\varpi^m T_i$ to $\varpi^{m-k}(\varpi^k T_i)$ and accordingly for rings of integral elements.
 On adic spectra this corresponds to the embedding of the ball of radius $|\varpi^{-k}|$ into the ball of radius $|\varpi^{-m}|$.
 We have compatible embeddings
 \[
  B[T_1,\ldots,T_n] \hookrightarrow B\langle \varpi^k T_1,\ldots,\varpi^k T_n \rangle
 \]
 mapping $T_i$ to $\varpi^{-k}(\varpi^k T_i)$ and we view the ideal~$I$ as a subset of $B\langle \varpi^k T_1,\ldots,\varpi^k T_n \rangle$ via this embedding.
 Then
 \[
  B_k: = B\langle \varpi^k T_1,\ldots,\varpi^k T_n \rangle/IB\langle \varpi^k T_1,\ldots,\varpi^k T_n \rangle
 \]
 is a Huber ring topologically of finite type over~$B$.
 Defining $B_k^+$ to be the integral closure of $B^+\langle \varpi^k T_1,\ldots,\varpi^k T_n \rangle$ in~$B_k$ we obtain a Huber pair $(B_k,B_k^+)$ topologically of finite type over $(B,B^+)$.
 For $m \ge k$ we obtain compatible open immersions
 \[
  \Spa(B_k,B_k^+) \longrightarrow \Spa(B_m,B_m^+)
 \]
 that we can glue together.
 The resulting adic space turns out to be the fiber product:
 \[
  Y \times_X Z = \bigcup_k \Spa(B_k,B_k^+).
 \]
 In order to see this we need to check that for every map of sheafy Huber pairs $\varphi: (B,B^+) \to (C,C^+)$ and every commutative diagram
 \[
  \begin{tikzcd}
   C						& A[T_1,\ldots,T_n]/I	\ar[l]	\\
   B	\ar[u,"\varphi"]	& A						\ar[l]	\ar[u]
  \end{tikzcd}
 \]
 there is $k \in \N$ and a homomorphism $\psi:(B_k,B_k^+) \to (C,C^+)$ making the diagram
 \[
  \begin{tikzcd}
   C												& [-10pt]	\\ [-12pt]
													& B_k	\ar[ul,"\psi"']						& A[T_1,\ldots,T_n]/I	\ar[l]	\ar[ull,bend right]	\\
													& B	\ar[u]	\ar[uul,bend left,"\varphi"]	& A						\ar[l]	\ar[u]
  \end{tikzcd}
 \]
 commutative.
 Let $c_1,\ldots,c_n$ be the images of $T_1,\ldots,T_n$ in~$C$.
 For big enough~$k$ the elements $\varpi^k c_i$ are all contained in~$C^+$ as~$C^+$ is open and~$\varpi$ is topologically nilpotent.
 Thus we obtain a well defined homomorphism
 \[
  \psi: (B_k,B_k^+) \to (C,C^+)
 \]
 mapping $\varpi^k T_i$ to $\varpi^k c_i$ and it is clear from the construction that the corresponding diagram commutes.
\end{proof}

\begin{definition}
 Let $(k,k^+)$ be an affinoid field.
 The analytification of a variety $X/k$ is defined as
 \[
  X^\an := X \times_{\Spec k} \Spa(k,k^+).
 \]
\end{definition}

Let $(k,k^\circ)$ be a nonarchimedean field.
We now have two ways of constructing an adic space over~$k$ out of a scheme~$X$ of finite type over~$k$.
The mechanism discussed in this section is the analytification resulting in~$X^\an$.
But we could also choose a model of~$X$ over~$k^\circ$, complete it with respect to a pseudouniformizer to obtain a formal scheme~$\cX$, and then take the generic fiber.
However, these two constructions do not lead to the same result as the following example shows.

\begin{example}
 Let us consider the affine line~$\A_k^1$ (as a scheme) over the nonarchimedean field $(k,k^\circ)$.
 We first compute its analytification.
 According to the construction in the proof of Proposition~\ref{analytification} we just need to glue the affinoids
 \[
  \Spa(k \langle \varpi^k T \rangle)
 \]
 for $k \in \N$.
 But we have seen this space in Example~\ref{example_affine_line}, it is the adic affine line:
 \[
  (\A_k^1)^{\ad} = \A_{(k,k^\circ)}^1.
 \]
 This closes the circle and justifies the nomenclature.
 
 Let us now go the other way via generic fibers of models.
 An obvious model of $\A_k^1$ over~$k^\circ$ is $\A_{k^\circ}^1= \Spec k^\circ[T]$.
 Completing it along the special fiber corresponds to taking the $\varpi$-adic completion of $k^\circ[T]$, i.e. the ring of convergent power series $k^\circ \langle T \rangle$.
 Consequently we have to compute the generic fiber of the formal scheme
 \[
  \Spa(k^\circ \langle T \rangle).
 \]
 We have done this in  Example~\ref{generic_fiber_closed_disc}.
 The resulting adic space is the closed unit disc $\DD_k(0,1)$.
\end{example}

In analogy to the analytification of complex varieties, nonarchimedean analytification satisfies a GAGA theorem.
It relates coherent sheaves on a proper variety~$X$ with coherent sheaves on its analytification~$X^\an$.
In order to treat this GAGA theorem we first have to define coherent sheaves on adic spaces, which will be done in Christian Johansson's lecture (\cite{JohTopicsAdic}) in section~1.1.
The GAGA theorem is also stated there as Theorem~1.2.19.


\subsection*{Acknowledgments}
 First of all I would like to thank the organizers of the spring school on non-archimedean geometry and eigenvarieties that took place in Heidelberg in spring 2023.
 Without their invitation to me to give a course and their managment of the editorial process to publish proceedings these notes would not have come into extistence.
 Moreover, my thanks go to Max Witzelsperger whose concept for the exercise session to this course also contributed to these notes.
 I am also grateful for his throrough proofreading of the manuscript.

\subsection*{Funding}
This project was funded by the Deutsche Forschungsgemeinschaft
(DFG, German Research Foundation), TRR 326 \emph{Geometry and Arithmetic of Uniformized Structures}, project number 444845124.

\bibliographystyle{meinStil}
\bibliography{citations}

\end{document}